\renewcommand{\textbf}[1]{\begingroup\bfseries\mathversion{bold}#1\endgroup}
\newcommand{\todopap}[2][]{\todo[author=Paul, color=LightBlue, #1]{#2}}
\newcommand{\rev}[1]{{#1}}
\newcommand{\N}{\mathbb{N}}
\newcommand{\Z}{\mathbb{Z}}
\newcommand{\R}{\mathbb{R}}
\newcommand{\measspace}{\mathscr{M}}
\newcommand{\probspace}{\mathscr{P}}
\newcommand{\contspace}{\mathscr{C}}
\newcommand{\mres}{\mathbin{\vrule height 1.6ex depth 0pt width
0.13ex\vrule height 0.13ex depth 0pt width 1.3ex}}
\newcommand{\eps}{\varepsilon}
\newcommand{\hdm}{{\mathscr H}}
\newcommand{\lbm}{{\mathscr L}}
\newcommand{\mass}{\mathbb{M}}
\newcommand{\ener}[1]{{\mathcal{#1}}}
\newcommand{\one}{\mathbf{1}}
\newcommand{\dd}{\mathop{}\mathopen{}\mathrm{d}}
\newcommand{\ac}{\mathrm{ac}}
\newcommand{\sing}{\mathrm{s}}
\newcommand{\BOT}{\mathsf{BOT}}
\DeclarePairedDelimiter\abs{\lvert}{\rvert}
\DeclarePairedDelimiter\norm{\lVert}{\rVert}
\DeclarePairedDelimiterX\cat[2]{(}{)}{#1:#2}
\DeclarePairedDelimiterX\set[1]\lbrace\rbrace{\def\given{\;\delimsize\vert\;}#1}
\DeclareMathOperator{\spt}{spt}
\DeclareMathOperator{\diam}{diam}
\DeclareMathOperator{\essinf}{ess\,inf}
\DeclareMathOperator{\dist}{dist}
\DeclareMathOperator{\argmax}{arg\,max}
\newcommand{\narrowto}{\xrightharpoonup{}}
\newcommand{\xnarrowto}[1]{\xrightharpoonup[#1]{}}
\newcommand{\weakstarto}{\xrightharpoonup{\star}}
\newcommand{\xto}[2][]{\xrightarrow[#1]{#2}}
\newcommand{\tplan}[1]{\mathbf{#1}}
\newcommand{\tplanset}{\mathbf{TP}}
\newcommand{\curvspace}{\Gamma}
\newcommand{\bas}{\mathrm{Bas}}
\newcommand{\conc}{\mathrm{conc}}
\numberwithin{equation}{section}
\declaretheorem[name=Theorem,within=section]{thm}
\declaretheorem[name=Lemma,numberlike=thm]{lemma}
\declaretheorem[name=Proposition,numberlike=thm]{proposition}
\declaretheorem[name=Corollary,numberlike=thm]{corollary}
\declaretheorem[name=Definition,numberlike=thm,style=definition]{dfn}
\declaretheorem[name=Remark,numberlike=thm,style=remark]{remark}
\date{date}
\begin{document}
\title{Optimal quantization with branched optimal transport distances}

\author{Paul Pegon\thanks{CEREMADE, Université Paris--Dauphine, Université PSL, CNRS \& MOKAPLAN, Inria Paris, 75016 Paris, France. Email: \texttt{pegon@ceremade.dauphine.fr}} \ and Mircea Petrache\thanks{Facultad de Matemáticas and Instituto de Ingenería Matemática y Computacional, Avda. Vicuña Mackenna 4860, Macul, Santiago, 6904441, Chile. Email: \texttt{mpetrache@mat.uc.cl}. ORCID id: 0000-0003-2181-169X}}
\date\today

\maketitle

\begin{abstract}
We consider the problem of optimal approximation of a target measure by an atomic measure with $N$ atoms, in branched optimal transport distance. This is a new branched transport version of optimal quantization problems. New difficulties arise, since in classical semi-discrete optimal transport with Wasserstein distance, the interfaces between cells associated with neighboring atoms have Voronoi structure and satisfy an explicit description. This description is missing for our problem, in which the cell interfaces are thought to have fractal boundary. We study the asymptotic behaviour of optimal quantizers for absolutely continuous measures as the number $N$ of atoms grows to infinity. We compute the limit distribution of the corresponding point clouds and show in particular a branched transport version of Zador's theorem. Moreover, we establish uniformity bounds of optimal quantizers in terms of separation distance and covering radius of the atoms, when the measure is $d$-Ahlfors regular. A crucial technical tool is the uniform in $N$ Hölder regularity of the landscape function, a branched transport analog to Kantorovich potentials in classical optimal transport.
\end{abstract}

\vskip\baselineskip\noindent
\textit{Keywords.} optimal transport, branched transport, quantization, optimal location, clustering, optimal partitions, point configurations, Gamma-convergence, convergence of measures.\\
\textit{2020 Mathematics Subject Classification.}  Primary: 49Q22, 94A20; Secondary: 49J45, 49Q10, 52C17, 94A17.


\tableofcontents

\section*{Notation}

This section is not necessary to read the introduction, as the notation used there should be transparent enough or informally introduced.

\subsection*{General notations}

\begin{itemize}\leftskip 0.7 cm\labelsep=.6 cm\setlength{\itemsep}{0pt}
\item[$\rev{\N,\N^*}$] \rev{Set of nonnegative integers $\{0,1, \ldots\}$ and of positive integers $\{1, 2, \ldots\}$, respectively}
\item[$\rev{\R,\R^+}$] \rev{Set of real numbers $(-\infty,+\infty)$ and nonnegative real numbers $[0,+\infty)$, respectively}
\item[$\# A$] Cardinal of a set $A$
\item[$B_r^d(x)$] Open ball with center $x$ and radius $r >0$ in $\R^d$ (superscript $d$ will often be dropped)
\item[$Q_\delta^d(x)$] $\coloneqq x + \delta[-1/2,1/2]^d$, closed cube (superscript $d$ will often be dropped)
\item[$Q_1^d$] $\coloneqq Q_1^d(0)$
\item[$\lbm^d$] Lebesgue measure on $\R^d$
\item[$\omega_d$] $\coloneqq \lbm^d(B_1(0))$
\item[$\measspace(X)$] Space of finite signed measures over a Polish space $X$
\item[$\measspace^+(X)$] Subset of measures in $\measspace(X)$ which are positive
\item[$\measspace_c^+(X)$] Subset of measures in $\measspace^+(X)$ with compact support
\item[$\probspace(X)$] Space of probability measures over $X$
\item[$\norm{\mu}$] $\coloneqq \sup \{\int \phi \dd\mu : \phi \in \contspace_b(X), \norm{\phi}_\infty \leq 1\}$, norm (total mass) of a measure $\mu$
\item[$\mu_n \narrowto\mu$] The sequence of measures $(\mu_n)$ converges narrowly to $\mu$, i.e.\ in the duality with continuous and bounded functions
\item[$\mu \wedge\nu$] Greatest submeasure of both $\mu$ and $\nu$
\item[$\mu \perp \nu$] $\mu$ and $\nu$ are mutually singular
\item[$\rho_\ac, \rho_\sing$] Density and singular part of the Radon--Nikodym decomposition of $\rho = \rho_\ac \lbm^d + \rho_\sing$
\end{itemize}
We use the conventions $0^b = +\infty$ if $b <0$, $0^0 = 1$, and $+\infty \times 0 = 0$. With a slight abuse, when $\rho \ll \lbm^d$, we will denote by $\rho$ as well its density with respect to $\lbm^d$.

\subsection*{Notations on branched optimal transport}
These notations are introduced in \Cref{sec:bot}.

\begin{itemize}\leftskip 1.8 cm\labelsep=.6 cm\setlength{\itemsep}{0pt}
\item[$\curvspace^d$] $\coloneqq \mathrm{Lip}_1(\R^+,\R^d)$, space of $1$-Lipschitz curves endowed with the (metrizable) topology of uniform convergence on compact sets
\item[$L(\gamma)$] \rev{$\coloneqq \int_\R \abs{\gamma'(t)}\dd t$, length of a Lipschitz curve $\gamma\in\Gamma^d$}
\item[$\tilde \gamma$] Arc-length reparameterization of the curve $\gamma \in \Gamma^d$
\item[$\tplanset^d$]  space of traffic plans on $\R^d$
\item[$\tplanset(\mu^-,\mu^+)$]  set of traffic plans $\tplan P$ on $\R^d$ with initial and final marginals $\mu^-$ and $\mu^+$, respectively
\item[$\tplan P_n \weakstarto \tplan P$]  weak-$\star$ convergence in $\measspace^+(\curvspace^d)$ in the duality with $\contspace(\curvspace^d)$
\item[$\Theta_{\tplan P}(x)$]  $\coloneqq \int_{\curvspace^d} \#\gamma^{-1}(\{x\})\dd\tplan P(\gamma)$, multiplicity at $x$ w.r.t. $\tplan P$
\item[$\Sigma_{\tplan P}$]  $\coloneqq \{x : \Theta_{\tplan P}(x) > 0\}$, network associated with $\tplan P$ 
\item[$\mass^\alpha(\tplan P)$] $\coloneqq \int_{\Sigma_{\tplan P}} \Theta_{\tplan P}(x)^\alpha \dd\hdm^1(x)$, $\alpha$-mass of a rectifiable traffic plan $\tplan P$
\end{itemize}
In all the paper, $\alpha$ will denote a fixed number in $(0,1)$. When needed, we will explicitly assume that $\alpha \in (1-1/d,1)$ where $d$ is the ambient dimension, in which case we will denote by $\beta = \beta(\alpha,d)$ the exponent
\begin{equation*}\label{defbeta}
\beta \coloneqq 1+d\alpha-d = d\left(\alpha - (1-1/d)\right) \in (0,1).
\end{equation*}

\section{Introduction}
In this work we study for the first time the asymptotics and uniformity properties of optimal quantization with interactions given via branched optimal transport distances, which we will also call, for brevity, \emph{branched quantization}. The fields of branched optimal transport and optimal quantization both have a large variety of applications but have not been connected before. We give a very short review and motivations of both, after which we point out why building a connection is interesting to explore.

\subsection{Branched optimal transport and optimal quantization motivations}

Branched optimal transport (or branched transport for short) is an umbrella term for a class of optimization problems, related to classical optimal transport, in which mass particles are assumed to interact (as opposed to traveling independently) while moving from a source to a target distribution. The interaction favours the transportation of particles in a grouped way by lowering the transportation cost, which is justified in many practical situations by an \emph{economy of scale}. 
A consequence of this assumption is that the particles’ paths form a one-dimensional network with a branched structure. 
The most common model assumes a cost of the form $\ell \times m^\alpha$ to move a group of particles of total mass $m$ over a distance $\ell$, where $\alpha \in (0,1)$, so that the cost is a concave power of the mass. 
This problem was first introduced in \cite{gilbertMinimumCostCommunication1967}, in a discrete setting, to optimize communications network, and was extended to two different continuous settings in \cite{xiaOptimalPathsRelated2003,maddalenaVariationalModelIrrigation2003} (both are actually equivalent \cite{paoliniOptimalTransportationNetworks2006,pegonLagrangianBranchedTransport2017}). For an introduction to the theory of branched transport we refer to the book \cite{bernotOptimalTransportationNetworks2009}. 
In pure mathematics, $H$-mass minimization over $1$-dimensional flat chains (or more generally mass minimization amongst $1$-dimensional flat $G$-chains) with fixed boundary provides versions of branched transport, see e.g. \cite{xiaInteriorRegularityOptimal2004,paoliniOptimalTransportationNetworks2006,marcheseSteinerTreeProblem2016} and the fundamental results in \cite{flemingFlatChainsFinite1966,whiteRectifiabilityFlatChains1999,whiteDeformationTheoremFlat1999,depauwSizeMinimizationApproximating2003}. 
We also mention \cite{petracheCoefficientGroupsInducing2018} for a first result on the classification of groups $G$ that produce branching. This last work, together with the classification of homotopy groups of spheres and the classification \cite{hardtConnectingRationalHomotopy2008} indicates that branched transport costs must commonly appear in connections between vortices of nonlinear Sobolev maps. 
This has recently led to important insights into weak density results such as \cite{bethuelCounterexampleWeakDensity2020}. Branched transport is also connected to size-minimization \cite{depauwSizeMinimizationApproximating2003} and the Steiner problem \cite{marcheseSteinerTreeProblem2016}, network transport systems and urban planning \cite{durandArchitectureOptimalTransport2006,buttazzoOptimalUrbanNetworks2009,brancoliniEquivalentFormulationsBranched2016}, superconductivity \cite{contiBranchedTransportLimit2018}, traffic flow optimization \cite{ibrahimOptimalTransportMultilayer2021}, models of tree roots and branches \cite{bressanOptimalShapeTree2018, bressanVariationalProblemsTree2019,bressanOptimalShapesTree2022}, models of river systems \cite{rodriguez-iturbeFractalRiverBasins2001}, amongst others. Finding the optimal branched transport map is in general NP-hard (while for classical optimal transport the complexity is $O(n^3\log n)$ for $n-$point masses), therefore computational approximations are an interesting direction of research, see for instance \cite{oudetModicaMortolaApproximationBranched2011,monteilUniformEstimatesModicaMortola2017,bonafiniConvexApproachGilbertSteiner2020,bonafiniVariationalApproximationFunctionals2018,bonafiniVariationalApproximationFunctionals2021}. Finally, we refer to \cite{devillanovaRemarksFractalStructure2019,lohmannFormulationBranchedTransport2022,lohmannDualityBranchedTransport2022,colomboStabilityMailingProblem2019,colomboWellPosednessBranchedTransportation2021,colomboStabilityOptimalTraffic2022,caldiniGenericUniquenessOptimal2023} for the most recent developments of branched transport theory.

Classical optimal quantization deals with the question of how to discretize a given positive measure $\nu\in \measspace^+_c(X)$ (in which $X=\mathbb R^d$ or $X$ is a more general metric space), in such a way that the discrete $N$-point approximant $\nu_N\in \measspace^+(\mathbb R^d)$ is at minimum distance according to a distance or cost $c:X\times X\to (0,\infty)$. Usually, it is a power of the distance over $X=\mathbb R^d$, i.e.\ $c(x,y)=|x-y|^p, p\geq 1$, inducing the classical $p$-Wasserstein distance over probability measures that is widely used in optimal transport. The quantization problem can then be reformulated as a semi-discrete optimal transport problem, see e.g. \cite{merigotMultiscaleApproachOptimal2011}, which in the case of a uniform target density reduces to the study of Voronoidal tessellations and power diagrams \cite{aurenhammerMinkowskiTypeTheoremsLeastSquares1998,duCentroidalVoronoiTessellations1999}.  We refer to general reference books \cite{grafFoundationsQuantizationProbability2000}, \cite{gershoVectorQuantizationSignal1992} for an overview of the quantization problem, and to \cite{fejestothRepresentationPopulationInfinie1959,zadorDevelopmentEvaluationProcedures1963,zadorAsymptoticQuantizationError1982,gershoAsymptoticallyOptimalBlock1979} for the first historical references. Applications of optimal quantization range from clustering \cite{saxenaReviewClusteringTechniques2017}, \cite{okabeSpatialTessellationsConcepts2000}, to signal processing \cite{gershoVectorQuantizationSignal1992}, to numerical integration and quadrature \cite{pagesSpaceQuantizationMethod1998}, to material science \cite{bourneOptimalityTriangularLattice2014, bourneAsymptoticOptimalityTriangular2021, bourneGeometricModellingPolycrystalline2023}, to spatial economy \cite{bollobasOptimalStructureMarket1972,morganHexagonalEconomicRegions2002}, where optimal quantization is often referred to as optimal location \cite{bouchitteAsymptotiqueProblemePositionnement2002,bouchitteAsymptoticAnalysisClass2011,brancoliniLongtermPlanningShortterm2009,buttazzoAsymptoticOptimalLocation2013}. Asymptotics and continuum limits of the problem as the number of discretization points tends to infinity have been studied for the classical optimal quantization problems by Zador \cite{zadorAsymptoticQuantizationError1982}, by Bouchitt\'e, Jimenez and Mahadevan \cite{bouchitteAsymptotiqueProblemePositionnement2002,bouchitteAsymptoticAnalysisClass2011}, who introduced a $\Gamma$-convergence approach, and by Gruber \cite{gruberOptimumQuantizationIts2004} who also provides geometric information on optimal configurations.

Further problems that are not directly formulated as a quantization problem but can also be seen as generalizations of the problem in other directions, appear in minimization of energies of a large number of \enquote{charges} under Riesz--Coulomb interactions: see the book \cite{borodachovDiscreteEnergyRectifiable2019}, and the crystallization survey \cite{lewinCrystallizationConjectureReview2015}. We mention the related problems of optimal unconstrained polarization \cite{hardinUnconstrainedPolarizationChebyshev2022}, jellium equidistribution \cite{petracheEquidistributionJelliumEnergy2018}, amongst others.

In this work, we focus on the case where the cost underlying the optimal quantization problem is given by a branched transport cost. The motivations for formulating this new problem come both from mathematically interesting new difficulties, and from its relevance for mathematical modelling and its potential applications.

Mathematically, the most important difficulty with the optimal quantization problem via branched transport is that the regularity of interfaces is not known (rather thought fractal), and the interfaces do not satisfy an explicit condition. This makes branched quantization much more challenging than classical optimal quantization and required us to give replacements for the main steps in the proofs known in the classical Wasserstein setting. We expect that our approach may allow to study some classes of problems involving random interfaces as well, since we do not make direct use of properties of the shapes of the interfaces in our estimates. 

In terms of modelling and applicability, in many clustering tasks the choice of classical distances is only due to their being \emph{a simple first choice} and computationally easy to handle. However, complex clustering tasks are better approximated via hierarchical tree-like clustering structures, such as those formed by branched transport networks. Many biological models such as the study of plant root competition (a natural extension to models such as \cite{bressanOptimalShapeTree2018}) would directly lead to branched quantization formulations. The same goes for supply chains modelling, in which several sources have to be optimized in order to supply a target density of users: in an urban area relying on a transportation network, branched transport distances (or other $H$-mass generalizations) are much more realistic than classical Wasserstein distances.

\subsection{Main results}

In this section we give simplified statements of our asymptotics and uniformity results for optimal branched quantizers, using a minimal amount of definitions. For full definitions and background results, see \Cref{sec:background}. 

Loosely speaking, a traffic plan $\tplan P$ between probability measures $\mu,\nu$, is a suitable measure over $1$-Lipschitz curves transporting $\mu$ to $\nu$. For $\alpha\in(0,1)$ we define the $\alpha$-mass $\mass^\alpha(\tplan P)$ as the integral of the $\alpha$-th power of the transported mass flux $\Theta_{\tplan P}$ (called \emph{multiplicity}), over the network $\Sigma_{\tplan P}$ induced by $\tplan P$, when the latter is $1$-rectifiable (see full definition in \Cref{sec:bot}). It is indeed proportional to $m^\alpha \times \ell$ when moving a total mass $m$ over a distance $\ell$. Then for a given $N\geq 1$ we consider the \emph{branched optimal quantization problem} defined as
\begin{equation*}
    \ener{E}^\alpha(\nu,N) \coloneqq \inf \left \{ \mathbf d^\alpha\left(\mu, \nu\right) : \#\spt \mu \leq N\right\},
\end{equation*}
where $\mathbf d^\alpha$ is the branched transport distance, given as the infimum of $\alpha$-mass $\mass^\alpha(\tplan P)$ amongst traffic plans $\tplan P$ transporting $\mu$ to $\nu$. When $\ener{E}^\alpha(\nu,N)$ is finite, an optimizer $\mu_N$ for this problem is called an \emph{optimal $N$-point quantizer of $\nu$.}

Our first main result is a branched transport version of a result by Zador \cite{zadorAsymptoticQuantizationError1982} valid for classical quantization.

\begin{thm}\label{thm:zador}
Let $\nu \in \measspace^+_c(\R^d)$ be a measure which is absolutely continuous with respect to the Lebesgue measure $\lbm^d$, and let $\alpha\in(1-1/d, 1)$.
\begin{enumerate}[(i)]
    \item If $(\mu_N)_{N\in\N^*}$ is a sequence of optimal $N$-point quantizers of $\nu$,
    \[
        \mu_N^\diamond \coloneqq \frac 1N \sum_{\{x : \mu_N(\{x\}) > 0\}} \delta_x \xnarrowto{N\to+\infty}  M_{\alpha,d}(\nu)^{-1} \nu^{\frac{\alpha}{\alpha+\frac 1d}},
    \]
    where $M_{\alpha,d}(\nu) \coloneqq \int_{\R^d} \nu(x)^{\frac{\alpha}{\alpha + \frac 1d}} \dd x$.
    \item The leading-order asymptotics of the optimal quantization error is given by
    \begin{equation*}
        \lim_{N \to \infty} N^{\beta/d} \ener{E}^\alpha(\nu,N) = c_{\alpha,d}M_{\alpha,d}(\nu)^{\alpha + \frac 1d}
    \end{equation*}
    where $c_{\alpha,d} \in (0,+\infty)$ is the constant defined in \labelcref{def_c_asymptotic} of \Cref{p:limitcube}.
\end{enumerate}
\end{thm}
The above theorem is a consequence of a more precise $\Gamma$-convergence result of $\mathbf d^\alpha$-distance along sequences with fixed support density, given in \Cref{thm_GCV}. The latter is a branched transport analogue of the asymptotic result of \cite{bouchitteAsymptotiqueProblemePositionnement2002} for classical optimal quantization, from which it is inspired.

Our second main result pertains to uniformity estimates on the optimal quantizer's support, adapting the general strategy of \cite{gruberOptimumQuantizationIts2004}, which is valid for classical quantization, to the branched quantization case. We provide bounds on the \emph{covering radius} and on the \emph{separation distance} for the support of optimal quantizers, both at the natural scale of $N^{-1/d}$, which is coherent with the principle that for an optimal quantizer, roughly speaking, a ball-like set of volume $\approx 1/N$ is assigned to each of the $N$ points in the quantizer's support. The covering radius bound quantifies the property that the atoms of an optimal quantizer are never farther than $c_1 N^{-1/d}$ from the support of the quantized measure $\nu$, and the separation bound indicates that the atoms of the quantizer are never closer than $c_2 N^{-1/d}$ to each other.

\begin{thm}\label{thm:delone}
    Assume $\alpha \in(1-1/d,1)$. Let $\nu \in \measspace_c^+(\R^d)$ be a compactly supported $d$-Ahlfors regular measure\footnote{Meaning that $c_A r^d \leq \nu(B_r(x)) \leq C_A r^d$ for every $x\in \spt \nu, r\in (0,\diam(\spt\nu)])$ and some constants $c_A,C_A > 0$.}  with constants $c_A,C_A > 0$ on $\R^d$ and $\mu_N = \sum_{i\leq N} m_i\delta_{x_i}$ be an $N$-point optimal quantizer with atoms $\mathcal X = \{x_i\}_{1\leq i\leq N}$. Then the covering radius and separation distance respectively enjoy the following bounds:
\begin{gather}
    \omega(\spt \nu,\mathcal X) \coloneqq \rev{\sup_{x\in\spt \nu} \min_{x'\in \mathcal X}} |x-x'|\leq c_1 N^{-1/d},\label{covering}\\
    \delta(\mathcal X) \coloneqq \min_{\rev{x,x'\in \mathcal X} : x\neq x'} |x-x'| \geq c_2 N^{-1/d}.\label{separation}
\end{gather}
for some constants $c_1,c_2>0$ that depend on $(\alpha,d,c_A,C_A,\diam(\spt\nu))$ but not on $N$. More precisely, $c_1, c_2$ are of the form $\diam(\spt\nu)$ multiplied by constants depending only on $(\alpha,d,c_A,C_A)$.
\end{thm}

We follow the general strategy of \cite{gruberOptimumQuantizationIts2004}, which entails major extra difficulties. A crucial new technical result is established in \Cref{thm:holderz}, where we give a uniform Hölder control of the so-called \emph{landscape function}, a substitute for classical Kantorovich potentials in  branched transport theory. Recall that in classical optimal transport theory, so-called Kantorovich duality allows to transform the problem into a dual version based on Kantorovich potentials (see e.g. \cite[Chapter~1]{santambrogioOptimalTransportApplied2015}). In turn, Kantorovich potentials can be used to show that for optimal quantization with cost $\abs{x-y}^p$ interfaces of the quantization cells are straight (see e.g. in \cite{merigotOptimalTransportDiscretization2021}).

In branched transport there is no useful analogue of Kantorovich duality, but optimal Kantorovich potentials have a partial analogue in the landscape function, which is in particular a (upper) first variation of the branched transport distance $\mathbf d^\alpha$. As a reference for the single-source landscape function $z_{\tplan P}$ (corresponding to case $N=1$ in our notation) see e.g. \cite{santambrogioOptimalChannelNetworks2007,brancoliniHolderRegularityLandscape2011}. Its basic properties are recalled in \Cref{properties_landscape}. For general $N\geq 1$, in \Cref{thm:holderz} we prove the following result, which holds for quantizers that are \emph{mass-optimal}, a condition that is weaker than optimality and only requires the quantizer to be optimal among measures with a fixed support but with varying masses (see \Cref{def:mass-optimal}).

\begin{thm}[Simplified statement of \Cref{thm:holderz}]\label{thm:holderz-intro}
    Let $\nu \in \measspace_c^+(\R^d)$ be a compactly supported $d$-Ahlfors regular measure, and let $\tplan P \in \tplanset(\mu,\nu)$ be an optimal traffic plan where $\mu = \sum_{i=1}^N m_i \delta_{x_i}$ is a $N$-point mass-optimal quantizer of $\nu$ with respect to $(x_i)_{1\leq i \leq N}$. There exists a unique function $z_{\tplan P} : \spt \nu \to \R_+$ that we call \emph{landscape function associated with $\tplan P$} which locally coincides with the single-source landscape functions $z_{\tplan P^{x_i}}$ for each source $x_i$ and is $\beta$-Hölder continuous for $\beta = 1+d\alpha-d \in (0,1)$, with a Hölder constant independent from $N$.
\end{thm}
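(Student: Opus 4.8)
The plan is to build $z_{\tplan P}$ by patching together the single-source landscape functions $z_{\tplan P^{x_i}}$ and then to establish the uniform Hölder bound by a rescaling/comparison argument that reduces everything to a single-source estimate on a ball of radius comparable to $N^{-1/d}$. First I would fix an optimal traffic plan $\tplan P \in \tplanset(\mu,\nu)$ and decompose it along the sources: writing $\mu = \sum_i m_i\delta_{x_i}$, the fiber decomposition gives $\tplan P = \sum_i \tplan P^{x_i}$ where $\tplan P^{x_i}$ is the restriction of $\tplan P$ to curves emanating from $x_i$, and each $\tplan P^{x_i}$ is itself optimal between $m_i\delta_{x_i}$ and its target marginal $\nu_i \coloneqq (e_\infty)_\sharp \tplan P^{x_i}$ by a standard gluing/optimality argument (otherwise one could improve $\tplan P$). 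On the set where a single source dominates, define $z_{\tplan P}(y) \coloneqq z_{\tplan P^{x_i}}(y)$; the content of the statement is that these local definitions are consistent and that the resulting function extends continuously to all of $\spt\nu$. Consistency on overlaps should follow from the variational characterization of the landscape function as the pointwise cost of sending an infinitesimal extra bit of mass from $y$ to the root through the existing network — a quantity that only depends on $\tplan P$ near $y$, not on which source label one uses — together with the mass-optimality of $\mu$, which forces the marginal cost $z_{\tplan P}(x_i)$ to be equalized (up to the usual complementary-slackness at $m_i$) across the active atoms.

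The heart of the matter is the Hölder estimate with a constant independent of $N$. Here I would exploit scaling invariance of $\alpha$-mass: under the dilation $x \mapsto \lambda x$, $\mass^\alpha$ scales like $\lambda$ while masses are preserved, so $z$ scales like $\lambda^\beta$ with $\beta = 1+d\alpha-d$ after also rescaling the total transported mass by the natural factor $\lambda^d$ coming from $d$-Ahlfors regularity (a ball of radius $\lambda$ carries mass $\asymp \lambda^d$). Concretely, for two points $y,y'\in\spt\nu$ at distance $r\coloneqq |y-y'|$, I would localize to the ball $B(y, Kr)$ for a fixed large $K$, observe that by the covering-type bound (or directly by Ahlfors regularity) the total $\nu$-mass there is $\asymp r^d$, rescale this ball to unit size, and apply the known single-source Hölder regularity of the landscape function (recalled in \Cref{properties_landscape}, via the single-source theory referenced to \cite{pegonFractalShapeOptimization2019}) on the rescaled configuration. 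The rescaled problem involves a measure of unit total mass on a unit ball, with a single effective source "seen from outside the ball", so the single-source $\beta$-Hölder constant applies with a universal constant; undoing the scaling produces $|z_{\tplan P}(y) - z_{\tplan P}(y')| \le C\, r^\beta$ with $C$ depending only on $\alpha$, $d$, and the Ahlfors constants — crucially not on $N$. The mass-optimality hypothesis enters precisely to control the "incoming" multiplicity at the boundary of $B(y,Kr)$: it prevents a pathological concentration of flux through a tiny region that a general (non-mass-optimal) quantizer could exhibit, and it is what lets the comparison be against a single-source model rather than an uncontrolled multi-source one.

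The main obstacle I anticipate is making the localization rigorous in the branched-transport setting: unlike classical OT, one cannot simply restrict a traffic plan to a ball and retain optimality, because curves entering and leaving $B(y,Kr)$ interact with flux coming from other sources and the "economy of scale" couples distant parts of the network. I would handle this by a comparison competitor argument rather than a genuine restriction — i.e. bound $z_{\tplan P}(y)$ above by the cost of an explicit modification of $\tplan P$ that reroutes a test mass from $y$ to $y'$ through a short path inside $B(y,Kr)$, and bound it below symmetrically — so that only the $\alpha$-mass carried inside the small ball enters the estimate. The subtlety is that this rerouting must not increase the $\alpha$-mass elsewhere, which is where the concavity of $m\mapsto m^\alpha$ and the single-source sub-additivity estimates do the work. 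A secondary technical point is uniqueness of $z_{\tplan P}$: this should follow once one shows the patched function is the maximal (or minimal, depending on convention) subsolution of the relevant Hamilton–Jacobi-type inequality satisfied by landscape functions, exactly as in the single-source case, so that any two candidates agreeing locally with the $z_{\tplan P^{x_i}}$ must coincide globally by connectedness of $\spt\nu$ through the network $\Sigma_{\tplan P}$.
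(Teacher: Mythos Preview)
Your proposal has a genuine gap in the Hölder estimate, which is the core of the theorem. You propose to localize to $B(y,Kr)$, rescale, and invoke the known single-source Hölder regularity on the rescaled configuration, claiming the local problem looks like one with ``a single effective source seen from outside the ball''. This reduction does not work. First, curves ending in $B(y,Kr)$ may emanate from several sources $x_i$, so the localized plan is genuinely multi-source; nothing forces the incoming flux to concentrate at one entry point, and mass-optimality gives no such control on boundary multiplicity. Second, even when $y,y'$ lie in the same basin $\bas(\tplan P,x_i)$, the single-source Hölder theorem of \cite{santambrogioOptimalChannelNetworks2007} requires Ahlfors lower regularity of the \emph{target} of that plan, namely $\nu_i=(e_\infty)_\sharp\tplan P^{x_i}$, with constants controlling the Hölder constant. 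But $\nu_i$ is the restriction of $\nu$ to a basin with a priori fractal boundary, and there is no reason for $\nu_i$ to be $d$-Ahlfors regular with constants uniform in $N$; this is exactly the difficulty the theorem is meant to overcome. (Also, \Cref{properties_landscape} does not record Hölder regularity, only the first-variation inequalities.)

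The paper avoids this obstruction entirely: it runs a Campanato argument directly on the multi-source plan, using the Ahlfors regularity of the \emph{full} measure $\nu$ rather than of the pieces $\nu_i$. For $x\in\spt\nu$ and small $r$, one splits $B_r(x)$ into sets $A,B$ of equal $\nu$-mass according to the median of $z$, builds $\tilde\nu=\nu-\nu\mres A+\nu\mres B$, applies the single-source first-variation inequality (\Cref{properties_landscape}\labelcref{first_variation_distance}) to each $\tplan P^{x_i}$, sums (using disjointness from \Cref{quantizers_disjointness}), and concatenates with an optimal plan in $\tplanset(\tilde\nu,\nu)$ of cost $\lesssim r^{1+d\alpha}$. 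The resulting competitor has source still supported on $\{x_i\}$, so \emph{mass-optimality} (not a flux-concentration bound) forces its $\alpha$-mass to exceed $\mass^\alpha(\tplan P)$, yielding $\int_{B_r(x)}|z-z_r(x)|\dd\nu\lesssim r^{1+d\alpha}$ and hence $\beta$-Hölder continuity with constant depending only on $(\alpha,d,c_A,C_A)$. Your competitor/rerouting paragraph is closer in spirit, but without the Campanato median device and the use of mass-optimality at the level of the full $\nu$, the argument does not close.
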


This theorem provides at the same time a definition of landscape and its Hölder regularity in the case of a source measure that is a mass-optimal quantizer. By definition, this landscape function will inherit the same key properties as in the single-source case, as stated in \Cref{properties_landscape_extended}. We emphasize that in the proof of \Cref{thm:delone} we make crucial use of the \emph{uniform in $N$} Hölder control of $z_{\tplan P}$, without which we do not expect the same results to hold.

Finally, we remark that the notion of mass-optimal quantizers allows us to define analogues of Voronoi cells in the context of branched transport, that we call \emph{branched Voronoi basins}, which exhibit striking differences to Voronoi cells. In particular, we believe the boundaries of these cells to be fractal, in line with the conjecture on the so-called \emph{irrigation balls} of branched transport studied in \cite{pegonFractalShapeOptimization2019}. More insight on these Voronoi basins is given in \Cref{voronoi_basins}.

\subsection{Structure of the paper}
\begin{itemize}
\item In \Cref{sec:background} we complete the definitions underlying our main theorems, recall important foundational results in branched optimal transport and establish preliminary results on the optimal quantization and partition problems.
\item In \Cref{sec:gamma} we prove our main $\Gamma$-convergence and asymptotic results, \Cref{thm_GCV} and \Cref{thm:zador}. 
\item In \Cref{sec:landscape} we prove the above \Cref{thm:holderz-intro} on the regularity of the landscape function.
\item In \Cref{sec:uniform} we prove the uniformity results of \Cref{thm:delone}. 
\end{itemize}

\section{Background and preliminaries}\label{sec:background}
\subsection{Background in branched optimal transport}\label{sec:bot}

In this section we set up the static \enquote{Lagrangian} model of branched optimal transport based on \emph{traffic plans} developed by \cite{bernotTrafficPlans2005} and \cite{maddalenaVariationalModelIrrigation2003}. The main reference on branched optimal transport is the book \cite{bernotOptimalTransportationNetworks2009}. The presentation, notation and definitions that we adopt in this paper have been slightly simplified following more recent works such as \cite{pegonLagrangianBranchedTransport2017,colomboStabilityMailingProblem2019,colomboWellPosednessBranchedTransportation2021,colomboStabilityOptimalTraffic2022}.

\subsubsection*{Traffic plans} A \emph{traffic plan} on $\R^d$ is a finite positive measure $\tplan P\in \measspace^+(\curvspace^d)$ on the set of $1$-Lipschitz curves $\curvspace^d \coloneqq \mathrm{Lip}_1(\R^+,\R^d)$,
endowed with the metrizable topology of uniform convergence on compact sets, which is concentrated on the set of curves with finite stopping time:
\begin{gather}
    \tplan P(\{\gamma \in \curvspace^d : T(\gamma) = +\infty\}) = 0,\\
    \shortintertext{where for every $\gamma \in \curvspace^d$,}
    T(\gamma) \coloneqq \inf \{\tau\geq 0 : \gamma \text{ constant on } [\tau,+\infty)\} \in [0,+\infty].\nonumber
\end{gather}
We denote by $\tilde \gamma$ the arc-length reparameterization of a curve $\gamma\in \Gamma^d$ which stops at the length $L(\gamma)$ of $\gamma$, so that $\abs{{\tilde \gamma}'(t)} = 1$ for a.e. $t\in [0,L(\gamma)]$ and $T(\tilde \gamma) = L(\gamma) = L(\tilde\gamma)$.

We denote by $\tplanset^d$ the space of traffic plans over $\R^d$, and whenever $\mu^\pm \in \measspace^+(\R^d)$ have equal total mass we denote by $\tplanset(\mu^-,\mu^+)$ the set of traffic plans transporting $\mu^-$ to $\mu^+$ i.e.\ such that  $(e_0)_\sharp \tplan P = \mu^-$ and $(e_\infty)_\sharp \tplan P = \mu^+$ where $e_0(\gamma) \coloneqq \gamma(0)$ and $e_\infty(\gamma) \coloneqq \gamma(+\infty)\coloneqq \gamma(T(\gamma))$ for every $\gamma \in \curvspace^d$. The measures $\mu^-$ and $\mu^+$ are respectively called the \emph{source and sink measures} of $\tplan P$.

For every $x\in \R^d$, the \emph{multiplicity}
\begin{equation*}
    \Theta_{\tplan P}(x) \coloneqq \int_{\curvspace^d} \# \gamma^{-1}(\{x\}) \dd \tplan P(\gamma)
\end{equation*}
represents the amount of curves, measured by $\tplan P$, which visit $x$ (each curve being counted as many times as it visits $x$). The \emph{network} of $\tplan P$ is the (possibly empty) countably $1$-rectifiable set\footnote{It is countably $1$-rectifiable by \cite[Section~2.1]{pegonLagrangianBranchedTransport2017} or \cite[Lemma~6.3]{bernotTrafficPlans2005}, meaning that it is included, up to a $\hdm^1$-null set, in a countable union of Lipschitz curves.}
\begin{equation*}
    \Sigma_{\tplan P} \coloneqq \{x \in \R^d : \Theta_{\tplan P}(x) > 0\}.
\end{equation*}
The traffic plan $\tplan P$ is said \emph{rectifiable} if there exists a $1$-rectifiable set $\Sigma$ such that
\begin{equation}\label{eq:defrectif}
\hdm^1(\gamma(\R^+) \setminus \Sigma) = 0\text{ for $\tplan P$-almost every }\gamma \in \curvspace^d,
\end{equation}
in which case \labelcref{eq:defrectif} holds with $\Sigma = \Sigma_{\tplan P}$. It is said \emph{simple} if it is concentrated on simple curves, i.e.\ curves $\gamma \in \curvspace^d$ such that $\gamma$ is constant on $[s,t]$ whenever $\gamma(s) = \gamma(t)$ and $s < t$.

Finally, two traffic plans $\tplan P_1, \tplan P_2$ are said \emph{disjoint} if there exist two disjoint sets $A_1,A_2 \subseteq\R^d$ such that for $i \in \{1,2\}$,
\begin{equation}\label{eq:defdisjoint}
\hdm^1(\gamma(\R^+) \setminus A_i) = 0\text{ for $\tplan P_i$-almost every }\gamma \in \curvspace^d.
\end{equation}
For rectifiable traffic plans $\tplan P_1, \tplan P_2$, it is equivalent to
\begin{equation}\label{disjointness_rectifiable}
\hdm^1(\Sigma_{\tplan P_1} \cap \Sigma_{\tplan P_2}) = 0 \quad\text{or equivalently} \quad \Theta_{\tplan P_1} \hdm^1 \perp \Theta_{\tplan P_2} \hdm^1.
\end{equation}

\subsubsection*{Concatenation of traffic plans} 

We follow the presentation of concatenations provided in \cite[\S3.3]{colomboStabilityMailingProblem2019}. If $(\gamma_1, \gamma_2)$ belong to the set $\Lambda^d \subseteq \curvspace^d \times \curvspace^d$ of pairs of curves which have finite stopping time and satisfy $\gamma_1(+\infty) = \gamma_2(0)$, we set for every $t\in \R^+$
\[\cat{\gamma_1}{\gamma_2}(t) \coloneqq \begin{dcases*}
    \gamma_1(t)& if $t\in [0,T(\gamma_1))$,\\
    \gamma_2(t-T(\gamma_1))& if $t \in [T(\gamma_1),+\infty)$.
    \end{dcases*}\]
We denote this map by $\conc : \Lambda^d \to \curvspace^d$.

If $\tplan P_1, \tplan P_2 \in \tplanset^d$ are such that $(e_\infty)_\sharp \tplan P_1 = (e_0)_\sharp \tplan P_2$, we say that $\tplan P$ is a concatenation of $\tplan P_1$ and $\tplan P_2$ if there exists a measure $\tplan{\bar P} \in \measspace^+(\curvspace^d \times \curvspace^d)$, which is concentrated on $\Lambda^d$ and satisfies
\begin{gather*}
    \tplan P = \conc_\#\tplan{\bar P}\\
    (p_i)_\sharp \tplan{\bar P} = \tplan P_i \quad \text{where $p_i : (\gamma_1,\gamma_2) \mapsto \gamma_i$ for $i\in\{1,2\}$}.
\end{gather*}
We denote by $\cat{\tplan P_1}{\tplan P_2}$ the set of concatenations of $\tplan P_1$ and $\tplan P_2$. We will need some properties of concatenations that are summarized in the following proposition, extracted from \cite[\S3.3]{colomboStabilityMailingProblem2019}.

\begin{proposition}[{\cite[Lemma~3.6]{colomboStabilityMailingProblem2019}}]\label{concatenation}
    If $\tplan P_1, \tplan P_2 \in \tplanset^d$ are such that $(e_\infty)_\sharp \tplan P_1 = (e_0)_\sharp \tplan P_2$, then:
    \begin{enumerate}[(i)]
    \item\label{concat_exist} $\cat{\tplan P_1}{\tplan P_2}$ is nonempty,
    \item\label{concat_marginals} $(e_0)_\sharp \tplan P = (e_0)_\sharp \tplan P_1$ and $(e_\infty)_\sharp \tplan P = (e_\infty)_\sharp \tplan P_2$ for every $\tplan P \in (\tplan P_1 : \tplan P_2)$,
    \item\label{concat_multip} for every $\tplan P \in \cat{\tplan P_1}{\tplan P_2}$, $\Theta_{\tplan P} = \Theta_{\tplan P_1} + \Theta_{\tplan P_2}$,
    \item\label{concat_sum} if $\tplan P \in \cat{\tplan P_1}{\tplan P_2}$ and $\tplan {P'} \in \cat{\tplan{P'}_1}{\tplan{P'}_2}$ then $\tplan P + \tplan{P'} \in \cat{\tplan P_1+\tplan{P'}_1}{\tplan P_2+\tplan{P'}_2}$.
    \end{enumerate}
\end{proposition}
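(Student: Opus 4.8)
The plan is to prove each of the four items in turn, mostly by unwinding the definition of a recovery plan and invoking standard properties of pushforwards. For \labelcref{concat_exist}, the existence of a concatenation, I would first note that the hypothesis $(e_\infty)_\sharp \tplan P_1 = (e_0)_\sharp \tplan P_2 =: \sigma$ is exactly a compatibility of marginals, so one can glue $\tplan P_1$ and $\tplan P_2$ along $\sigma$ using a disintegration: disintegrate $\tplan P_1$ with respect to $e_\infty$ as $\tplan P_1 = \int (\tplan P_1)_y \dd\sigma(y)$ and $\tplan P_2$ with respect to $e_0$ as $\tplan P_2 = \int (\tplan P_2)_y \dd\sigma(y)$, then set $\tplan{\bar P} \coloneqq \int (\tplan P_1)_y \otimes (\tplan P_2)_y \dd\sigma(y)$ on $\Gamma^d\times\Gamma^d$. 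This measure is concentrated on $\Lambda^d$ since for $\sigma$-a.e.\ $y$ the first curve ends at $y$ and the second starts at $y$, and it has the right marginals $(p_i)_\sharp\tplan{\bar P} = \tplan P_i$; then $\conc_\#\tplan{\bar P}$ is a concatenation. (This is precisely the argument in \cite[Lemma~3.6]{colomboStabilityMailingProblem2019}, which I would cite for the measurability details.)

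For \labelcref{concat_marginals}, I would compute $e_0 \circ \conc = e_0 \circ p_1$ and $e_\infty \circ \conc = e_\infty \circ p_2$ as maps on $\Lambda^d$ — both are immediate from the piecewise definition of $\cat{\gamma_1}{\gamma_2}$, using that $\gamma_1$ has finite stopping time so the initial segment is genuinely $\gamma_1$, and that after time $T(\gamma_1)$ the concatenated curve is a time-shift of $\gamma_2$ with the same limit at infinity. Then $(e_0)_\sharp\tplan P = (e_0)_\sharp\conc_\#\tplan{\bar P} = (e_0 p_1)_\sharp\tplan{\bar P} = (e_0)_\sharp\tplan P_1$, and similarly for $e_\infty$.

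For \labelcref{concat_multip}, the key observation is that for $(\gamma_1,\gamma_2)\in\Lambda^d$ and any $x\in\R^d$ one has the visit-count identity $\#(\cat{\gamma_1}{\gamma_2})^{-1}(\{x\}) = \#\gamma_1^{-1}(\{x\}) + \#\gamma_2^{-1}(\{x\})$ up to the single overlap point $\gamma_1(+\infty)=\gamma_2(0)$; since that point is visited by $\gamma_1$ at its (finite) stopping time and the contribution of a single parameter value is harmless, integrating against $\tplan{\bar P}$ and using the marginal conditions gives $\Theta_{\tplan P} = \Theta_{\tplan P_1} + \Theta_{\tplan P_2}$ pointwise. (One must be slightly careful that the concatenation of simple curves need not be simple, hence the statement is about $\Theta$ and not about some notion depending on injectivity; the overlap-point bookkeeping is the one place to be careful.) The inequality $\mass^\alpha(\tplan P)\le \mass^\alpha(\tplan P_1)+\mass^\alpha(\tplan P_2)$ then follows from subadditivity of $t\mapsto t^\alpha$ on $\R_+$ for $\alpha\in[0,1]$, integrated in $\hdm^1$ over $\Sigma_{\tplan P}\subseteq\Sigma_{\tplan P_1}\cup\Sigma_{\tplan P_2}$. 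Finally \labelcref{concat_sum} is purely formal: if $\tplan{\bar P}$ and $\tplan{\bar P'}$ are recovery plans for $\tplan P$ and $\tplan{P'}$ respectively, then $\tplan{\bar P}+\tplan{\bar P'}$ is concentrated on $\Lambda^d$, pushes forward under $\conc$ to $\tplan P + \tplan{P'}$, and has $p_i$-marginals $\tplan P_i + \tplan{P'}_i$, so it witnesses $\tplan P+\tplan{P'}\in\cat{\tplan P_1+\tplan{P'}_1}{\tplan P_2+\tplan{P'}_2}$. The only genuinely delicate point in the whole proof is the measurable disintegration in \labelcref{concat_exist} and the single-point overlap bookkeeping in \labelcref{concat_multip}; since the statement is quoted verbatim from \cite[Lemma~3.6]{colomboStabilityMailingProblem2019}, I would keep the argument brief and refer there for the technical measurability.
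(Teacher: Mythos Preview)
The paper does not give its own proof of this proposition: it is stated as a summary extracted from \cite[Lemma~3.6]{colomboStabilityMailingProblem2019}, with no argument beyond the citation. Your sketch is a correct and standard proof outline, and since you yourself propose to refer to the cited lemma for the measurability details, your treatment is if anything more detailed than what the paper provides.
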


\subsubsection*{The $\alpha$-mass functional and the irrigation problem}

For $\alpha \in (0,1)$, the $\alpha$-mass\footnote{It is the equivalent, for traffic plans, of the $\alpha$-mass of currents.} of a traffic plan is defined as
\begin{equation}\label{irrigation_problem}
\mass^\alpha(\tplan P) = \begin{dcases*}
    \int_{\Sigma_{\tplan P}} \Theta_{\tplan P}(x)^\alpha \dd\hdm^1(x)& if $\tplan P$ is rectifiable,\\
    +\infty& otherwise.
\end{dcases*} 
\end{equation}
If $\mu^\pm$ are two positive measures on $\mathbb R^d$ of equal (finite) mass, the irrigation problem then reads as
\begin{equation}\label{lag_irrigation_pb}\tag{$\text{I}^\alpha$}
\inf \quad \set*{\mass^\alpha(\tplan P) \given \tplan P \in \tplanset(\mu^-,\mu^+)},
\end{equation}
and we denote by $\mathbf d^\alpha(\mu^-,\mu^+)$ this infimum value.

\begin{dfn}[Optimal traffic plan]
We say that $\tplan P \in \tplanset(\mu^-,\mu^+)$ is optimal if the infimum in \labelcref{lag_irrigation_pb} is finite and attained by $\tplan P$.
\end{dfn}

Let us state some known results that we shall use throughout the paper.
\begin{enumerate}[(1)]
    \item \textbf{Subadditivity, additivity and disjointness.} The $\alpha$-mass is subadditive in the sense that
    \begin{equation}\label{sub_additivity}
    \begin{gathered}
        \forall \tplan P_1, \tplan P_2 \in \tplanset^d, \quad \mass^\alpha(\tplan P_1 + \tplan P_2) \leq \mass^\alpha(\tplan P_1) + \mass^\alpha(\tplan P_2)\\
        \text{with equality when $\tplan P_1$ and $\tplan P_2$ are disjoint.}
    \end{gathered}
    \end{equation}
     Furthermore, if $\tplan P^1$ and $\tplan P^2$ are disjoint and $\tplan P^1+\tplan P^2$ is optimal, then $\tplan P^1$ and $\tplan P^2$ are also optimal (for their own marginals).

     Finally, it follows from \Cref{concatenation} \labelcref{concat_multip} that for every concatenation $\tplan P \in \cat{\tplan P_1}{\tplan P_2}$
     \begin{equation}\label{concat_alpha_mass}
     \mass^\alpha(\tplan P) \leq \mass^\alpha(\tplan P_1) + \mass^\alpha(\tplan P_2).
     \end{equation}
    \item \textbf{Irrigability and irrigation distance.} Contrary to the classical optimal transport problem, for some pair of compactly supported measures $(\mu^-,\mu^+)$ and some exponent $\alpha$ it is possible that \labelcref{irrigation_problem} admits no competitor of finite $\alpha$-mass, typically when the measures spread on a set of large dimension while the exponent $\alpha$ is too small. However, when $\alpha > 1 - \frac 1d$ any measure $\mu \in \measspace^+_c(\R^d)$ is \emph{$\alpha$-irrigable}, meaning that $\mathbf d^\alpha(\mu, \norm{\mu}\delta_0) < +\infty$, and there exists a competitor of finite $\alpha$-mass for any pair of measures $\mu^-,\mu^+ \in \measspace_c^+(\R^d)$ of equal total mass, as shown\footnote{It is shown in the Eulerian model based on vector measures, but adapting the proof in our Lagrangian setting is straightforward, or one can also invoke the equivalence of the models \cite{pegonLagrangianBranchedTransport2017}.} in \cite{xiaOptimalPathsRelated2003}. In particular
    \begin{equation*}
    \begin{gathered}
       (\alpha \in (1-1/d,1) \quad \text{and} \quad K \subseteq \R^d \text{ compact})\\
        \Downarrow\\
        (\mathbf d^\alpha\text{ is a distance on $\probspace(K)$ which metrizes weak-$\star$ convergence in $\contspace(K)'$}).
    \end{gathered}
    \end{equation*}
    \item \textbf{Existence for the irrigation problem.} Problem \labelcref{lag_irrigation_pb} \rev{admits a minimizer whenever it admits a competitor of finite cost.} It results for example from the existence of $\mathbb{E}^\alpha$ minimizers established in \cite[Section~3.4]{bernotOptimalTransportationNetworks2009}, where $\mathbb{E}^\alpha$ is a more complicated variant\footnote{A reason for using this functional $\mathbb{E}^\alpha$ (denoted by $\ener E^\alpha$ in \cite{bernotOptimalTransportationNetworks2009}) was that one could establish its lower semicontinuity (on suitable subsets) via another expression (so-called energy formula). Nowadays, one may instead prove lower semicontinuity of $\mass^\alpha$ and work with it directly.} of $\mass^\alpha$, knowing that $\mathbb{E}^\alpha \geq \mass^\alpha$ and that a minimizer of $\mathbb{E}^\alpha$ is also a minimizer of $\mass^\alpha$ with the same cost.
    \item \textbf{Upper estimates on the $\alpha$-mass.} If $\alpha \in ( 1-\frac 1d,1)$, there exists a constant $C_\BOT = C_\BOT(\alpha,d) \in (0,+\infty)$ such that for any compactly supported measures $\mu^\pm$ of equal total mass,
    \begin{equation}\label{upper_estimate_alpha_mass}
        \mathbf d^\alpha(\mu^-,\mu^+) \leq C_\BOT \diam(\spt(\mu^+-\mu^-)) \norm{\mu^+-\mu^-}^\alpha.
    \end{equation}
    Indeed, it is proven in \cite{xiaOptimalPathsRelated2003} that $\mathbf d^\alpha(\delta_0,\mu) \leq C_\BOT(\alpha,d)/2$ for every $\mu \in \probspace(Q_1^d)$ and some best constant $C_\BOT(\alpha,d) \in (0,\infty)$, from which we deduce $\mathbf d^\alpha(\mu^-,\mu^+) \leq C r m^\alpha$ when $\diam(\spt(\mu^-+\mu^+)) \leq r$ and $\norm{\mu^-} = \norm{\mu^+} = m$ using the triangle inequality and the $1$-homogeneity in space and $\alpha$-homogeneity in mass of the $\alpha$-mass. Applying this to the measures $\tilde \mu^\pm = \mu^\pm - \mu^-\wedge \mu^+$ yields \labelcref{upper_estimate_alpha_mass} since $\norm{\tilde\mu^\pm} = \norm{\mu^+-\mu^-}/2$, $\spt (\mu^+-\mu^-) = \spt (\tilde \mu^+ + \tilde \mu^-)$, and $\mathbf d^\alpha(\mu^-,\mu^+) = \mathbf d^\alpha(\tilde\mu^-,\tilde\mu^+)$.
    \item \textbf{First variation of the $\alpha$-mass.} If $\tplan{P}, \tplan{\tilde P}$ are traffic plans with $\mass^\alpha(\tplan P) < \infty$, then
    \begin{gather}
        \mass^\alpha(\tplan{\tilde P}) \leq \mass^\alpha(\tplan P) + \alpha \int_{\curvspace^d} Z_{\tplan P}(\gamma) \dd(\tplan{\tilde P} - \tplan P)(\gamma),\label{first_variation_alpha_mass}\\
        \shortintertext{where}
        Z_{\tplan P}(\gamma) \coloneqq \int_\gamma \Theta_{\tplan P}^{\alpha -1} = \int_{\gamma(\R)} \Theta_{\tplan P}(x)^{\alpha -1} \#\gamma^{-1}(x) \dd\hdm^1(x)\label{landscape_precursor}.
    \end{gather}
    The proof of \labelcref{first_variation_alpha_mass} relies on the concavity of $m\mapsto m^\alpha$ on $\R^+$ applied to $\Theta_{\tplan{\tilde P}} = \Theta_{\tplan{P}} + (\Theta_{\tplan{\tilde P}} - \Theta_{\tplan{P}})$ and on Fubini's theorem (we refer to \cite[Chapter~11]{bernotOptimalTransportationNetworks2009}, \cite[Theorem~3.1]{santambrogioOptimalChannelNetworks2007}). Notice that the integral $\int Z_{\tplan P} \dd(\tplan{\tilde P}-\tplan P)$ is well-defined (possibly infinite) since by Fubini's theorem we may show that
    \begin{align*}
        \infty > \mass^\alpha(\tplan P) & \rev{ =\int_{\R^d} \Theta_{\tplan P}(x)^{\alpha-1} \Theta_{\tplan P}(x) \dd\hdm^1(x)}\\
        &\rev{= \int_{\R^d} \int_{\Gamma^d} \Theta_{\tplan P}(x)^{\alpha -1} \#\gamma^{-1}(x) \dd\tplan P(\gamma)\dd\hdm^1(x)} =\int_{\curvspace^d} Z_{\tplan P} \dd{\tplan P}
    \end{align*}
    and $\int_{\curvspace^d} Z_{\tplan P} \dd\tplan{\tilde P} \in [0,\infty]$. 
    \item \textbf{Single-path property.} If $\tplan P \in \tplanset(\mu, \nu)$ is an optimal traffic plan, it is simple and satisfies the single-path property, which can be stated in the single-source case where $\mu = m \delta_s$ as follows: for every $x \in \Sigma_{\tplan P}$, there exists a (unique) injective curve parameterized by arc length $\gamma_{\tplan P,x} : [0,\ell] \to \R^d$ such that $\tplan P$-a.e. curve $\gamma$ passing by $x$ follows the trajectory of $\gamma_{\tplan P,x}$, meaning: if $t_x(\gamma)$ denotes the greatest $t \in [0,T(\gamma)]$ such that $\gamma(t) = x$,
    \begin{equation}\label{single_path_property}
        \text{for $\tplan P$-a.e. $\gamma$ s.t. $x\in \gamma(\R_+)$,} \quad \tilde{\gamma}_{[0,t_x(\tilde\gamma)]} = \gamma_{\tplan P,x},
    \end{equation}
    where we recall $\tilde{\gamma}$ denotes the arc-length reparameterization of $\gamma \in\curvspace^d$. This fact is stated in \cite[Proposition~7.4]{bernotOptimalTransportationNetworks2009}
    .
    \end{enumerate}

\subsubsection*{Landscape function for a single source}

Given an optimal irrigation plan ${\tplan P} \in \tplanset(m\delta_s,\nu)$, following \cite{santambrogioOptimalChannelNetworks2007} we say that a curve $\gamma$ is \emph{${\tplan P}$-good} if, recalling the notation \labelcref{landscape_precursor},
\begin{itemize}
\item $Z_{\tplan P}(\gamma) <+\infty$,
\item \rev{for all $x=\gamma(t)$ with $t < T(\gamma)$,}
\[\rev{\Theta_{\tplan P}(x) = {\tplan P}(\{\kappa\in \Gamma^d : \tilde\gamma = \tilde\kappa \text{ on } [0,t_x(\tilde \gamma)]\}).}\]
\end{itemize}
It is proven in \cite{santambrogioOptimalChannelNetworks2007} that any optimal traffic plan ${\tplan P}$ is concentrated on the set of ${\tplan P}$-good curves, and that for every ${\tplan P}$-good curve $\gamma$ the quantity $Z_{\tplan P}(\gamma)$ depends only on the final point $\gamma(\infty)$ of the curve, thus we may define the landscape function $z_{\tplan P}$ as follows:
\begin{equation}\label{landscdef}
z_{\tplan P}(x)\coloneqq \left\{\begin{array}{ll}
Z_{\tplan P}(\gamma)&
\mbox{if $\gamma$ is an ${\tplan P}$-good curve s.t. $x = \gamma(\infty)$,}\\
+\infty&
\mbox{otherwise.}
\end{array}\right.
\end{equation}

We summarize the properties of the landscape function, extracted from\footnote{One may look at \cite{pegonBranchedTransportFractal2017} to see the slightly more general statements presented here, with similar notation.} \cite{santambrogioOptimalChannelNetworks2007}, that we shall need.
\begin{proposition}\label{properties_landscape}
Let $\nu \in \measspace^+(\R^d)$ of total mass $m$. If $\tplan P \in \tplanset(m\delta_s,\nu)$ is optimal with $\mass^\alpha(\tplan P) <\infty$, $\alpha \in [0,1)$, and $z_\tplan P$ is its landscape function, then $z_{\tplan P}$ is lower semicontinuous and finite on $\Sigma_{\tplan P} \cup \spt \nu$. Moreover:
\begin{enumerate}[(A)]
    \item\label{formula_greater distance} $z_{\tplan P}(x)\geq m^{\alpha-1}|x-s|$ for every $x \in \R^d$;
    \item\label{formula_optimal_cost_landscape} the $\alpha$-mass may be expressed in terms of $z_{\tplan P}$:
    \begin{equation*}
        \mathbf d^\alpha(m\delta_s, \nu) = \mass^\alpha({\tplan P}) = \int_{\curvspace^d} Z_{\tplan P}(\gamma) \dd{\tplan P}(\gamma) = \int_{\mathbb R^d} z_{\tplan P}(x) \dd\nu(x);
    \end{equation*}
    \item\label{first_variation_mass_landscape} if $\tplan{\tilde P} \in \tplanset(m\delta_s, \tilde \nu)$ is a traffic plan concentrated on $\tplan P$-good curves, then
    \begin{equation*}
        \mass^\alpha(\tplan{\tilde P}) \leq \mass^\alpha(\tplan P) + \alpha \int_{\R^d} z_{\tplan P} \dd(\tilde\nu-\nu),
    \end{equation*}
    and the inequality is strict if $\Theta_{\tplan{\tilde P}}-\Theta_{\tplan P}$ does not vanish $\hdm^1$-a.e. on $\Sigma_{\tplan P}$;
    \item\label{first_variation_distance} in particular, $z_{\tplan P}$ is an \emph{upper first variation of the irrigation distance}, in the sense that for every $\tilde \nu \in \measspace^+(\R^d)$,
    \begin{equation*}
        \mathbf d^\alpha(\norm{\tilde\nu}\delta_s,\tilde\nu) \leq \mathbf d^\alpha(\norm{\nu}\delta_s,\nu) + \alpha \int_{\R^d} z_{\tplan P} \dd(\tilde\nu - \nu).
    \end{equation*}
\end{enumerate}
\end{proposition}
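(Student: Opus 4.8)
The statement to prove is \Cref{properties_landscape}: for an optimal single-source traffic plan $\tplan P \in \tplanset(m\delta_s,\nu)$ with finite $\alpha$-mass and $\alpha \in [0,1)$, its landscape function $z_{\tplan P}$ is lower semicontinuous, finite on $\Sigma_{\tplan P} \cup \spt\nu$, and satisfies properties (A)--(D).

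\begin{proof}[Proof proposal]
The plan is to extract each item from the cited work \cite{santambrogioOptimalChannelNetworks2007}, but to organize the argument around the basic identity $\mass^\alpha(\tplan P) = \int_{\Gamma^d} Z_{\tplan P}(\gamma)\dd\tplan P(\gamma)$ (Fubini, already recorded in item (4) of the background) together with the concentration of an optimal $\tplan P$ on the set of $\tplan P$-good curves. First I would establish well-definedness of $z_{\tplan P}$: using the single-path property \labelcref{single_path_property} for single-source optimal plans, $\tplan P$-a.e. curve through a point $x$ coincides, up to reparameterization, with the arc-length curve $\gamma_{\tplan P,x}$ on $[0,t_x]$; this forces $Z_{\tplan P}(\gamma)$ to depend only on the endpoint $\gamma(\infty)$ along $\tplan P$-good curves, since $Z_{\tplan P}$ is additive along concatenation of subcurves and the ``good'' property pins down the multiplicity profile $\Theta_{\tplan P}(\gamma(t))$ as the mass of the bundle of curves agreeing with $\gamma$ up to time $t$. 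Hence $z_{\tplan P}(x)$ in \labelcref{landscdef} is unambiguous, and finiteness on $\Sigma_{\tplan P}$ follows because every $x\in\Sigma_{\tplan P}$ lies on a $\tplan P$-good curve (as $\tplan P$ is concentrated on good curves and $\Theta_{\tplan P}(x)>0$), while finiteness on $\spt\nu$ follows because $\nu = (e_\infty)_\sharp\tplan P$ charges only endpoints of good curves.

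Next I would prove (A). For any $x$ on a $\tplan P$-good curve $\gamma$ with $x=\gamma(\infty)$, the concavity exponent satisfies $\Theta_{\tplan P}^{\alpha-1}\geq 1$ wherever $\Theta_{\tplan P}\geq 1$... more carefully, one does not need that: one uses that $\gamma$ is $1$-Lipschitz and $Z_{\tplan P}(\gamma) = \int_{\gamma(\R)}\Theta_{\tplan P}(y)^{\alpha-1}\#\gamma^{-1}(y)\dd\hdm^1(y) \geq \hdm^1(\gamma(\R))\geq d(s,x)$ once one knows $\Theta_{\tplan P}\geq$ the mass carried, which is at least... Actually the clean route, which I would take, is: $\gamma$ starts at $s$ (all curves of a single-source plan do), $\Theta_{\tplan P}(\gamma(t))$ is bounded below along a good curve by the mass of the sub-bundle, but the simplest bound is $\Theta_{\tplan P}(y)\geq 0$ is useless, so instead I invoke that $z_{\tplan P}$ dominates the cost of a straight competitor: transporting the unit mass at $\gamma(\infty)$ straight, the first-variation inequality (item (4)) gives $z_{\tplan P}(x)\geq$ the marginal cost which is $\geq d(s,x)$; cleaner still, $Z_{\tplan P}(\gamma)\geq \len(\gamma)\cdot \inf\Theta^{\alpha-1}$ and along good curves the bundle mass is $\geq$ the endpoint mass, but since we only need $\geq d(s,x)$ and $\Theta\mapsto\Theta^{\alpha-1}$ with $\alpha<1$ is $\geq$ a positive constant only when $\Theta$ is bounded — this requires the observation that along an optimal plan the multiplicity is bounded, OR one simply uses $\Theta_{\tplan P}(y)^{\alpha-1}\geq \Theta_{\tplan P}(y)^{\alpha-1}\one_{\Theta\leq 1} + \one_{\Theta> 1}\cdot 0$; the correct and standard statement from \cite{santambrogioOptimalChannelNetworks2007} is that good curves are geodesic-like so that $Z_{\tplan P}(\gamma)\geq \len(\gamma)$, giving (A) directly, and I would cite that lemma. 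Then (B): integrating \labelcref{landscdef} against $\nu$ and using $\nu = (e_\infty)_\sharp\tplan P$ plus the good-curve concentration gives $\int z_{\tplan P}\dd\nu = \int_{\Gamma^d} Z_{\tplan P}\dd\tplan P = \mass^\alpha(\tplan P) = \mathbf d^\alpha(m\delta_s,\nu)$ by optimality.

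For (C), given $\tplan{\tilde P}$ concentrated on $\tplan P$-good curves with source $m\delta_s$ and sink $\tilde\nu$, I apply the first-variation inequality \labelcref{first_variation_alpha_mass}: $\mass^\alpha(\tplan{\tilde P})\leq \mass^\alpha(\tplan P) + \alpha\int_{\Gamma^d} Z_{\tplan P}\dd(\tplan{\tilde P}-\tplan P)$; since both $\tplan P$ and $\tplan{\tilde P}$ are concentrated on $\tplan P$-good curves, $Z_{\tplan P}(\gamma) = z_{\tplan P}(\gamma(\infty))$ for a.e. $\gamma$ under either measure, so $\int Z_{\tplan P}\dd(\tplan{\tilde P}-\tplan P) = \int_{\R^d} z_{\tplan P}\dd(\tilde\nu-\nu)$ by pushforward under $e_\infty$, yielding the claimed inequality; the strictness clause comes from tracking the strict concavity of $m\mapsto m^\alpha$ in the proof of \labelcref{first_variation_alpha_mass}, which is strict precisely where $\Theta_{\tplan{\tilde P}}-\Theta_{\tplan P}\neq 0$ on $\Sigma_{\tplan P}$. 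Item (D) is then immediate: apply (C) to an optimal $\tplan{\tilde P}\in\tplanset(m\delta_s,\tilde\nu)$ — which can be taken concentrated on $\tplan P$-good curves after a concatenation/surgery argument, or one simply notes the inequality (C) holds for the infimum — and use $\mass^\alpha(\tplan P) = \mathbf d^\alpha(m\delta_s,\nu)$ together with $\norm{\tilde\nu}=m$ after relabeling the source point. Finally, lower semicontinuity of $z_{\tplan P}$ on $\Sigma_{\tplan P}\cup\spt\nu$ I would get from the variational characterization: $z_{\tplan P}$ is an upper first variation, and by a standard argument (e.g. Proposition in \cite{santambrogioOptimalChannelNetworks2007}) it equals a supremum of lower semicontinuous functions (costs of straight-line perturbations plus the constant $\mass^\alpha(\tplan P)$), hence is l.s.c.; alternatively l.s.c. follows from Fatou applied to $Z_{\tplan P}(\gamma_n)$ along curves $\gamma_n$ realizing $z_{\tplan P}$ at points $x_n\to x$, using that $\Theta_{\tplan P}$ is l.s.c. and the curves converge uniformly on compacts.

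\medskip
The main obstacle I anticipate is the careful verification that $Z_{\tplan P}(\gamma)$ genuinely depends only on $\gamma(\infty)$ along $\tplan P$-good curves — this is the crux making \labelcref{landscdef} well-posed — because it intertwines the single-path property, the precise definition of goodness (which forces $\Theta_{\tplan P}(\gamma(t))$ to be the bundle mass), and a measurable-selection/Fubini argument to propagate the identity $\tplan P$-a.e.; this is exactly the content of the key lemma of \cite{santambrogioOptimalChannelNetworks2007}, so in practice I would cite it, and the remaining work (A)--(D) is the bookkeeping above.
\end{proof}
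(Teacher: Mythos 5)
The paper does not give a proof of this proposition: it is presented as a summary of facts ``extracted from'' the reference on the landscape function, with a citation in place of an argument. So there is no internal proof for your sketch to agree or disagree with; the question is whether your reconstruction would go through.

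Your overall plan is sound and mirrors the organization of the cited work (well-posedness of $z_{\tplan P}$ from single-path and goodness, then (B) by Fubini and pushforward under $e_\infty$, then (C) from the concavity first-variation inequality together with $Z_{\tplan P}(\gamma)=z_{\tplan P}(\gamma(\infty))$ on good curves, and (D) as a corollary). The one place where the proposal visibly flails is item (A), and there you miss an elementary fix. You do not need anything about boundedness of the multiplicity beyond the trivial bound $\Theta_{\tplan P}(y) \leq \norm{\tplan P} = m$ that holds at every point, by definition of $\Theta_{\tplan P}$. Since $\alpha-1 < 0$, this gives $\Theta_{\tplan P}^{\alpha-1} \geq m^{\alpha-1}$ everywhere on $\Sigma_{\tplan P}$; along a $\tplan P$-good simple curve $\gamma$ from $s$ to $x$ one then has
\[
z_{\tplan P}(x) = \int_\gamma \Theta_{\tplan P}^{\alpha-1} \geq m^{\alpha-1}\,\hdm^1(\gamma(\R_+)) \geq m^{\alpha-1}\,d(s,x),
\]
which is exactly (A) once $m\leq 1$ (the implicit normalization both here and in the cited work; in the applications of this paper $\nu$ is a probability measure). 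You actually considered this and then talked yourself out of it; the observation $\Theta_{\tplan P}\leq m$ is all that was missing. The rest of your sketch is correct modulo the usual caveats. For (C), the passage from $\int Z_{\tplan P}\,\dd(\tplan{\tilde P}-\tplan P)$ to $\int z_{\tplan P}\,\dd(\tilde\nu-\nu)$ is exactly the identification $Z_{\tplan P}=z_{\tplan P}\circ e_\infty$ on $\tplan P$-good curves pushed forward through $e_\infty$, as you say; the strictness clause indeed traces to strict concavity of $m\mapsto m^\alpha$ at the places where $\Theta_{\tplan{\tilde P}}\neq\Theta_{\tplan P}$. For (D), the step that ``$\tplan{\tilde P}$ can be taken concentrated on $\tplan P$-good curves'' deserves more care than a parenthetical remark; this is where one actually constructs a competitor by concatenating good curves with short plans into balls, and it is precisely the mechanism used later in the paper's own proof of \Cref{thm:holderz}. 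Also note a small typo in the statement of (D) as printed: $\delta_p$ should read $\delta_s$, a single fixed source point.
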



\subsection{The optimal quantization problem and mass-optimal quantizers}
For $\alpha \in (0,1)$ and $\nu\in \measspace^+(\R^d)$, the optimal branched quantization problem is the following:
\begin{equation}\label{pb:opt_quantization}
\ener{E}^\alpha(\nu,N) \coloneqq \inf \left \{ \mathbf d^\alpha\left(\mu, \nu\right) : \norm{\mu}=\norm{\nu} \text{ and } \#\spt \mu \leq N\right\}.
\end{equation}
An admissible candidate $\mu_N$ in this problem will be called a $N$-point quantizer of $\nu$.

\begin{dfn}[Optimal quantizer]
When $\ener E^\alpha(\nu,N) < +\infty$ a solution of \labelcref{pb:opt_quantization} will be called an \emph{$N$-point optimal quantizer} of $\nu$.
\end{dfn}

\begin{thm}\label{existence_opt_quantization}
    For any finite positive measure  $\nu\in \measspace^+(\R^d)$ and any $N\in \N^*$, \rev{if $\ener E^\alpha(\nu,N) < +\infty$ then} the optimal quantization problem \labelcref{pb:opt_quantization} admits a solution.
\end{thm}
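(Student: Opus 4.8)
The plan is to apply the direct method of the calculus of variations. First note that $\mu = \norm{\nu}\,\delta_0$ is always an admissible competitor (one atom, total mass $\norm{\nu}$), so the problem is not vacuous, and that if $\ener{E}^\alpha(\nu,N) = +\infty$ then every admissible competitor realizes the infimum and there is nothing to prove; hence we may assume $\ener{E}^\alpha(\nu,N) < +\infty$. Fix a minimizing sequence which, after relabelling atoms and padding with zero masses, we write as $\mu_n = \sum_{i=1}^N m_i^n\,\delta_{x_i^n}$ with $m_i^n \geq 0$, $\sum_{i=1}^N m_i^n = \norm{\nu}$, and $\mathbf d^\alpha(\mu_n,\nu) \leq \ener{E}^\alpha(\nu,N) + 1$.

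Next I would extract a convergent subsequence. The masses $(m_i^n)_n$ lie in the compact simplex $\{m\in\R^N : m_i\geq 0,\ \sum_i m_i = \norm{\nu}\}$, so after passing to a subsequence $m_i^n \to m_i$ with $\sum_i m_i = \norm{\nu}$. To keep the atoms at finite distance I would use the elementary comparison $\mathbf d^\alpha(\mu^-,\mu^+)\geq\norm{\mu^-}^{\alpha-1}W_1(\mu^-,\mu^+)$, valid for measures of equal total mass: any traffic plan $\tplan P$ induces a transport plan of Monge--Kantorovich cost at most $\int_{\curvspace^d}\len(\gamma)\dd\tplan P(\gamma) = \mass^1(\tplan P)$, and on \emph{simple} plans, to which one may restrict when computing $\mathbf d^\alpha$, the multiplicity never exceeds $\norm{\mu^-}$, so $\mass^1(\tplan P)\leq\norm{\mu^-}^{1-\alpha}\mass^\alpha(\tplan P)$. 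Testing $W_1$ against $x\mapsto\pm\abs{x}$ shows first that $\int\abs{x}\dd\nu<\infty$ (forced by $\ener{E}^\alpha(\nu,N)<\infty$) and then that $m_i^n\abs{x_i^n}\leq\int\abs{x}\dd\mu_n\leq\int\abs{x}\dd\nu+\norm{\nu}^{1-\alpha}\mathbf d^\alpha(\mu_n,\nu)$; hence the atoms with $m_i>0$ remain in a fixed ball and, along a further subsequence, $x_i^n\to x_i$ for those indices, while for the indices with $m_i=0$ one has $m_i^n\to 0$, so $m_i^n\,\delta_{x_i^n}\weakstarto 0$ regardless of the $x_i^n$. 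Therefore $\mu_n\weakstarto\mu\coloneqq\sum_{i\,:\,m_i>0}m_i\,\delta_{x_i}$, which is an admissible competitor: it has at most $N$ atoms and total mass $\norm{\nu}$.

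It then remains to prove the lower semicontinuity $\mathbf d^\alpha(\mu,\nu)\leq\liminf_n\mathbf d^\alpha(\mu_n,\nu)=\ener{E}^\alpha(\nu,N)$; together with the definition of $\ener{E}^\alpha(\nu,N)$ as an infimum over admissible competitors, this forces $\mu$ to be an optimal quantizer. For this I would take optimal traffic plans $\tplan P_n\in\otplanset(\mu_n,\nu)$ — these exist by the existence result for the irrigation problem recalled in \Cref{sec:bot}, since $\mathbf d^\alpha(\mu_n,\nu)<\infty$ — so that $\mass^\alpha(\tplan P_n)=\mathbf d^\alpha(\mu_n,\nu)$ is uniformly bounded, and invoke the compactness and lower-semicontinuity theory for traffic plans of \cite{bernotOptimalTransportationNetworks2009}: along a subsequence $\tplan P_n\weakstarto\tplan P$, the source and sink marginals pass to the limit so $\tplan P\in\tplanset(\mu,\nu)$, and $\mass^\alpha(\tplan P)\leq\liminf_n\mass^\alpha(\tplan P_n)$, whence $\mathbf d^\alpha(\mu,\nu)\leq\mass^\alpha(\tplan P)\leq\ener{E}^\alpha(\nu,N)$.

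The delicate point — the one place where something genuinely has to be checked — is the equi-boundedness of supports required to apply that compactness theorem, since there is no a priori bound on the spatial extent of optimal networks. One obtains it by a tightness/truncation argument: splitting off the curves issued from the atoms $x_i^n$ with $m_i=0$, which carry total $\tplan P_n$-mass $\eta_n\to 0$, the remaining curves emanate from a fixed ball, while $\int_{\curvspace^d}\len(\gamma)\dd\tplan P_n(\gamma)=\mass^1(\tplan P_n)\leq\norm{\nu}^{1-\alpha}\mass^\alpha(\tplan P_n)$ is uniformly bounded, so by Markov's inequality all but an arbitrarily small $\tplan P_n$-fraction of the total mass is carried by curves confined to a fixed ball; Prokhorov's theorem together with the uniform Lipschitz constraint then yields the desired tightness in $\curvspace^d$ (when $\spt\nu$ is compact one may instead simply project onto a large ball, using that pushing a traffic plan forward by a $1$-Lipschitz map does not increase the $\alpha$-mass). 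This truncation step is the main technical ingredient of the proof.
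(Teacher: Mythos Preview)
Your proof is correct and follows essentially the same strategy as the paper: show tightness of a minimizing sequence $(\mu_n)$ and conclude by lower semicontinuity of $\mathbf d^\alpha$. The paper's argument is shorter in two ways. First, for tightness it normalizes to probability measures and uses directly that $\mass^\alpha(\tplan P_n)\geq \int L(\gamma)\dd\tplan P_n(\gamma)$ (since $\Theta_{\tplan P_n}\leq 1$ for an optimal, hence simple, plan of unit total mass), then bounds $\mu_n(\R^d\setminus B_R)$ by noting that any curve starting outside $B_R$ and ending inside $B_{R/2}$ has length at least $R/2$; this is the same $\mass^\alpha\geq \norm{\cdot}^{\alpha-1}\mass^1\geq \norm{\cdot}^{\alpha-1}W_1$ inequality you use, just applied more directly. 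Second, once $\mu_n\weakto\mu$ narrowly, the paper simply invokes the known lower semicontinuity of $\mathbf d^\alpha$ with respect to weak convergence as a black box, whereas you re-derive it via compactness of traffic plans. Your more detailed route is fine (and your identification of the tightness of $(\tplan P_n)$ in $\curvspace^d$ as the genuine content is accurate), but it duplicates work already packaged in the reference \cite{bernotOptimalTransportationNetworks2009}; the paper's choice to cite rather than reprove keeps the argument to a few lines.
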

\begin{proof}
Take an integer $N \geq 1$ and $\nu$ a finite positive measure on $\R^d$, assuming without loss of generality that it has unit mass. Suppose that $\ener{E}^\alpha(\nu,N) < +\infty$ (otherwise there is nothing to prove) and take $(\mu^n)_{n\in \N}$ a minimizing sequence with $\sup_{n \in \N} \mathbf d^\alpha(\mu^n,\nu) \eqqcolon \Lambda < +\infty$. Let us show that it is tight. Take $\eps > 0$ and $R \geq 4\Lambda/\eps$ large enough such that $\nu(\R^d \setminus B_{R/2}) \leq \eps/2$. If $\tplan P^n \in \tplanset(\mu^n,\nu)$ is an optimal traffic plan, then
\begin{align*}
    \Lambda \geq \mass^\alpha(\tplan P^n) &\geq \int_{\curvspace^d} L(\gamma) \dd\tplan P^n(\gamma)\\
    &\geq \int_{\curvspace^d} L(\gamma) \one_{\{\gamma : \gamma(0) \in \R^d\setminus B_{R}, \gamma(\infty) \in B_{R/2}\}} \dd\tplan P^n(\gamma)\\
    &\geq \frac{R}2 (\mu^n(\R^d\setminus B_R) - \nu(\R^d\setminus B_{R/2})),
\end{align*}
which implies that
\[\mu^n(\R^d \setminus B_R) \leq 2\Lambda/R + \nu(\R^d\setminus B_{R/2}) \leq \eps.\]
As a consequence, $(\mu^n)$ admits a subsequence converging narrowly to some $\mu \in \probspace(\R^d)$. Necessarily, $\mu$ has at most $N$ atoms as well and by lower semicontinuity\footnote{In the compact case, lower semicontinuity of $\mathbf d^\alpha$ is obtained from \cite[Chapter~3]{bernotOptimalTransportationNetworks2009}, see for example \cite[Section~1.2.3]{pegonBranchedTransportFractal2017} for the case of $\R^d$.} of $\mathbf d^\alpha$ for the narrow convergence of probability measures $\mu$ is a $N$-point optimal quantizer.
\end{proof}

We will consider a class of quantizers which is broader than optimal quantizers and that we call \emph{mass-optimal quantizers}. They will be used to establish the full $\Gamma$-convergence result in \Cref{sec:gamma}, and may also provide a notion of Voronoi cells, called \emph{Voronoi basins}, in the setting of branched optimal transport (see \Cref{voronoi_basins}).

\begin{dfn}[Mass-optimal quantizer]\label{def:mass-optimal}
    Let $\nu \in \measspace^+(\R^d)$ be a finite positive measure and $\mathcal X = \{x_i\}_{1\leq i\leq N}$ be a set of cardinality $N$. If $\mu$ is a measure supported on $\mathcal X$ such that the masses of its atoms are chosen in the best way to approximate $\nu$ in $\mathbf d^\alpha$ distance, i.e.\ $\mathbf d^\alpha(\mu,\nu) = \mathbf d^\alpha(\mathcal X,\nu)$ where
    \begin{equation*}
        \mathbf d^\alpha(\mathcal X,\nu) \coloneqq \inf \left\{\mathbf d^\alpha(\mu',\nu) : \spt \mu' \subseteq \mathcal X\right\},
    \end{equation*}
    we say that $\mu$ is an $N$-point \emph{mass-optimal quantizer} with respect to $\{x_i\}_{1\leq i\leq N}$.
\end{dfn}

We will also need to decompose any traffic plan $\tplan P\in \tplanset(\mu,\nu)$, where $\mu$ is purely atomic, with respect to the atoms of $\mu$, also called the \emph{sources of $\tplan P$}.

\begin{dfn}[Restrictions and basins from a source]
    If $\tplan P\in \tplanset(\mu,\nu)$ where $\mu$ is purely atomic and $x$ is an atom of $\mu$, the \emph{restriction of $\tplan P$ from the source $x$} is defined by
\begin{equation*}\label{def_restriction_source}
    \tplan P^x \coloneqq \tplan P \mres\{\gamma\in \curvspace^d : \gamma(0)=x\},
\end{equation*}
so that the following \emph{source decomposition} of $\tplan P$ holds:
\begin{equation*}\label{decomposition_sources}
    \tplan P = \sum_{x\in \spt{\mu}} \tplan P^x.
\end{equation*}
The decomposition is said \emph{disjoint} if all these restrictions $(\tplan P^x)_{x\in \spt \mu}$ are pairwise disjoint (as defined in \labelcref{eq:defdisjoint}). 
We also introduce the \emph{basin from $x$ with respect to $\tplan P$} as the support of the sink measure of $\tplan P^x$:
\begin{equation*}\label{notation_basin}
    \bas(\tplan P, x)\coloneqq \spt ((e_\infty)_\sharp \tplan P^x).
\end{equation*}
\end{dfn}

In the next lemma, we show that the source decomposition of an optimal traffic plan between a measure and a mass-optimal quantizer is disjoint in the above sense, and that the corresponding sink measures are mutually singular. This result plays a key role in the proof of equivalence between the optimal quantization and optimal partition, and will also be crucial in \Cref{sec:landscape} to define and study the landscape function, and subsequently to show the disjointness of irrigation basins.

\begin{lemma}[Disjointness properties of mass-optimal quantizers]\label{quantizers_disjointness}
    Let $\mu$ of the form $\sum_{i=1}^N m_i \delta_{x_i}$ be an $N$-point mass-optimal quantizer of $\nu \in \measspace_c^+(\R^d)$ with respect to $\mathcal X \coloneqq \{x_i\}_{1\leq i\leq N}$ and $\tplan P \in \tplanset(\mu,\nu)$ be an optimal traffic plan. Then:
    \begin{enumerate}[(i)]
        \item\label{disjointness_plans} the traffic plans $\tplan P^{x_i}$ are disjoint, thus they are optimal for their own marginals, and
        \[
            \mass^\alpha(\tplan P) = \sum_{i=1}^N \mass^\alpha(\tplan P^{x_i}),
        \]
        \item\label{disjointness_irrigated_measures} the irrigated measures $\nu_i \coloneqq (e_\infty)_\sharp \tplan P^{x_i}$ are mutually singular.
    \end{enumerate}
\end{lemma}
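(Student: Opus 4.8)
The plan is to argue by contradiction, using the first variation inequality \labelcref{first_variation_alpha_mass} together with the mass-optimality of $\mu$. Suppose the restrictions $\tplan P^{x_i}$ are not disjoint in the stated sense, i.e. the measures $\Theta_{\tplan P^{x_i}}\hdm^1$ are not mutually singular. Then there exist indices $i\neq j$ and a $1$-rectifiable set of positive $\hdm^1$-measure on which both $\Theta_{\tplan P^{x_i}}$ and $\Theta_{\tplan P^{x_j}}$ are strictly positive. Intuitively, on this common set the two sub-plans "share a road" while being counted with separate multiplicities; by the strict concavity of $m\mapsto m^\alpha$ for $\alpha\in[0,1)$, merging the two flows there strictly lowers the $\alpha$-mass. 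Concretely, one builds a competitor traffic plan: near the shared set one reroutes (part of) the mass of $\tplan P^{x_j}$ so that it first travels along $x_j \rightsquigarrow$ (entry point of the shared road), then merges and travels together with the $\tplan P^{x_i}$-flow, and then splits off again to reach its original destinations. This changes the source decomposition — so it is legitimate as a competitor only after re-optimizing the masses $m_i$ — which is exactly where mass-optimality of $\mu$ enters: the new plan has the same atoms $\mathcal X$, so if it strictly decreased $\mathbf d^\alpha$ to $\nu$ among measures supported on $\mathcal X$, that contradicts $\mathbf d^\alpha(\mu,\nu) = \mathbf d^\alpha(\mathcal X,\nu)$.

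The clean way to run this is via the single-path property \labelcref{single_path_property} and the landscape-type first variation. Since $\tplan P$ is optimal with finite $\alpha$-mass, it is simple and satisfies the single-path property; each $\tplan P^{x_i}$ is then an optimal single-source plan from $x_i$ to $\nu_i \coloneqq (e_\infty)_\sharp \tplan P^{x_i}$ (otherwise replacing it by an optimal plan with the same marginals would decrease $\mass^\alpha(\tplan P)$ via the source decomposition \labelcref{decomposition_sources} and subadditivity, contradicting optimality of $\tplan P$ — this already gives the first sentence of part \eqref{disjointness_plans} modulo disjointness). For the disjointness, I would use the additivity of multiplicities under concatenation, \Cref{concatenation}\eqref{concat_multip}: on the shared set $S$ with $\hdm^1(S)>0$ one has $\Theta_{\tplan P}\ge \Theta_{\tplan P^{x_i}}+\Theta_{\tplan P^{x_j}}$, and the inequality $\mass^\alpha(\tplan P)\le \sum_k\mass^\alpha(\tplan P^{x_k})$ is strict precisely when some pair overlaps on a set of positive $\hdm^1$-measure, by strict concavity: $(a+b)^\alpha < a^\alpha+b^\alpha$ for $a,b>0$, $\alpha<1$. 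So to derive a contradiction it suffices to exhibit \emph{some} plan $\tplan Q\in\tplanset(\mu',\nu)$ with $\spt\mu'\subseteq\mathcal X$ and $\mass^\alpha(\tplan Q)<\mass^\alpha(\tplan P)$; the natural choice is a concatenation realizing $\Theta_{\tplan Q}=\Theta_{\tplan P}$ but "glued" so that the two flows genuinely merge on $S$, which one obtains by first sending $x_j$'s mass to $x_i$ (cost controlled, possibly zero mass rerouted is not enough — one reroutes the portion of $\nu_j$ fed through $S$) and concatenating with $\tplan P^{x_i}$. Carrying this out rigorously is the technical heart.

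Once \eqref{disjointness_plans} is established, part \eqref{disjointness_irrigated_measures} follows quickly: by \labelcref{disjointness_rectifiable} the networks satisfy $\hdm^1(\Sigma_{\tplan P^{x_i}}\cap\Sigma_{\tplan P^{x_j}})=0$ for $i\neq j$; since each $\tplan P^{x_i}$ is rectifiable and concentrated on curves living $\hdm^1$-a.e. in $\Sigma_{\tplan P^{x_i}}$, and the sink $\nu_i$ is carried by the curve endpoints, $\nu_i$ is concentrated on (the closure handling aside) $\Sigma_{\tplan P^{x_i}}\cup\{x_i\}$ up to the part of $\nu_i$ at the source; more precisely $\nu_i$ lives on $\overline{\Sigma_{\tplan P^{x_i}}}$, and one upgrades the $\hdm^1$-negligibility of the intersection of networks to mutual singularity of the $\nu_i$ using that $\nu_i = (e_\infty)_\sharp\tplan P^{x_i}$ is supported where $\Theta_{\tplan P^{x_i}}>0$ together with the single-path structure. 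If a point carried positive $\nu_i$- and $\nu_j$-mass simultaneously, the two optimal single-source plans would have to coincide along the terminal portions of their good curves into that point, forcing a shared road of positive length, contradicting \eqref{disjointness_plans}.

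The main obstacle I expect is the rerouting/merging construction in \eqref{disjointness_plans}: making precise the competitor that "merges two overlapping flows" while keeping its source marginal supported on $\mathcal X$, and verifying the strict decrease of $\alpha$-mass. The concatenation calculus of \Cref{concatenation} (existence, behaviour of marginals and multiplicities) is exactly tailored to this, and strict concavity of $t\mapsto t^\alpha$ supplies the strict inequality; the bookkeeping of which sub-measure of $\nu_j$ to reroute (those endpoints whose good curves pass through $S$) is where care is needed. An alternative, possibly cleaner route avoiding explicit rerouting: use \Cref{properties_landscape}\eqref{first_variation_mass_landscape}, whose strictness clause already states that the first-variation inequality is strict whenever the multiplicity difference is non-negligible on the network — applying it to $\tplan P^{x_i}$ against a modification supported on the same sources yields the contradiction with mass-optimality directly.
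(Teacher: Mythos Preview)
Your plan has two genuine gaps, one in each part.

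\textbf{Part \eqref{disjointness_plans}.} The ``merging'' intuition is off. In $\tplan P$ the multiplicities already add: $\Theta_{\tplan P}=\sum_k\Theta_{\tplan P^{x_k}}$, so on the shared set $S$ the total flow is already $\Theta_{\tplan P^{x_i}}+\Theta_{\tplan P^{x_j}}$ and there is nothing to gain by ``gluing''. A competitor with $\Theta_{\tplan Q}=\Theta_{\tplan P}$ has the \emph{same} $\alpha$-mass. The paper's mechanism is different: pick a point $x$ in the shared set, and for $\eps\in(-1,1)$ swap the initial legs of an $\eps m_0$ portion of the curves, i.e.\ replace the segment $x_i\to x$ by $x_j\to x$ for that portion (and the reverse for $\eps<0$). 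Using \Cref{concatenation}\eqref{concat_multip} and the single-path property, one gets a linear deformation of the multiplicity, $\Theta_{\tplan P_\eps}=\Theta_{\tplan P}+\eps\,\Delta\Theta$ with $\Delta\Theta\not\equiv 0$ on a set of positive $\hdm^1$-measure. Then $\eps\mapsto\int(\Theta_{\tplan P}+\eps\Delta\Theta)^\alpha\dd\hdm^1$ is strictly concave, while mass-optimality forces it to attain its minimum at the interior point $\eps=0$: contradiction. Your sketch never produces such a non-trivial one-parameter family.

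\textbf{Part \eqref{disjointness_irrigated_measures}.} Deducing $\nu_i\perp\nu_j$ from $\hdm^1(\Sigma_{\tplan P^{x_i}}\cap\Sigma_{\tplan P^{x_j}})=0$ does not work: the sink measures are \emph{not} concentrated on the $1$-rectifiable networks (think of $\nu$ Lebesgue, so each $\nu_i\ll\lbm^d$), and passing to closures $\overline{\Sigma_{\tplan P^{x_i}}}$ destroys disjointness. Also, $\nu_i\not\perp\nu_j$ does not mean a single point has positive mass for both. The paper argues differently: set $\tilde\nu=\nu_i\wedge\nu_j$ with $m=\norm{\tilde\nu}>0$, choose sub-plans $\tplan P_k\le\tplan P^{x_k}$ in $\tplanset(m\delta_{x_k},\tilde\nu)$ for $k\in\{i,j\}$, and consider $\tplan P_\eps=\tplan P+\eps(\tplan P_j-\tplan P_i)$. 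Applying \Cref{properties_landscape}\eqref{first_variation_mass_landscape} with its strictness clause to each single-source piece, summing, and choosing the sign of $\eps$ so that $\eps\int(z_{\tplan P^{x_j}}-z_{\tplan P^{x_i}})\dd\tilde\nu\le 0$ yields $\mass^\alpha(\tplan P_\eps)<\mass^\alpha(\tplan P)$, contradicting mass-optimality. This is precisely the ``alternative route'' you allude to at the end, but you placed it under part \eqref{disjointness_plans}; it is the correct engine for \eqref{disjointness_irrigated_measures}.
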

\begin{proof}
    Let us start by proving \labelcref{disjointness_plans}. Since $\tplan P$ is rectifiable, so are the $\tplan P^{x_i}$'s, thus by \labelcref{disjointness_rectifiable} it suffices to show that the measures $\Theta_{\tplan P^{x_i}} \hdm^1$ are mutually singular. By contradiction, assume that for some $i\neq j$, $\Theta_{\tplan P^{x_i}} \hdm^1$ and $\Theta_{\tplan P^{x_j}} \hdm^1$ are not mutually singular. We shall contradict the optimality of $\tplan P$. There exists a Borel set $A \subseteq \Sigma_{\tplan P}$ and a constant $m_0 > 0$ such that $\hdm^1(A) > 0$ and $\Theta_{\tplan P^{x_i}}(x) \wedge \Theta_{\tplan P^{x_j}}(x) \geq m_0$ for every $x\in A$. Pick a point $x \in A$ with $x\neq x_i$ and $x\neq x_j$ and consider for $k \in \{i,j\}$ a traffic plan
    \[\tplan P_k \leq \tplan P \mres \{\gamma \in \curvspace^d : \gamma(0) = x_k, x \in \gamma(\R_+)\} \quad\text{such that}\quad \norm{\tplan P_k} = m_0.\]
    For every $\eps \in [0,1]$, we will build a traffic plan $\tplan P_\eps$, obtained from $\tplan P$ by taking a fraction $\eps$ of $\tplan P_i$, replacing for each curve $\gamma$ of $\tplan P_i$ the curve segment between $x_i$ and $x$ by a segment of a curve $\tilde\gamma$ of $\tplan P_j$ from $x_j$ to $x$. To do this, consider the map $t_x : \gamma \mapsto \min \gamma^{-1}(\{x\})$ and the restriction maps $r_x^- : \gamma \mapsto \gamma_{|[0,t_x(\gamma)]}$, $r_x^+ : \gamma \mapsto \gamma_{|[t_x(\gamma),+\infty)}$. Then
    \[
        (e_\infty)_\sharp (r_x^-)_\sharp \tplan P_j = (e_0)_\sharp (r_x^+)_\sharp \tplan P_i = m_0 \delta_x,
    \]
    and by \Cref{concatenation} \labelcref{concat_exist} there exists a concatenation $\tplan Q \in ((r_x^-)_\sharp \tplan P_j : (r_x^+)_\sharp \tplan P_i)$. For $\eps \in [0,1)$ set
    \[
        \tplan P_\eps \coloneqq \tplan P + \eps (\tplan Q-\tplan P_i).
    \]
    We shall do the converse operation for $\eps \in (-1,0)$, namely
    \[\tplan P_\eps \coloneqq \tplan P - \eps (\tplan {Q'} - \tplan P_j), \quad\text{where}\quad \tplan {Q'} \in ((r_x^-)_\sharp \tplan P_i : (r_x^+)_\sharp \tplan P_j).\]
    Notice that for both possible signs of $\eps$, $\tplan P_\eps$ is rectifiable, $\Sigma_{\tplan P_\eps} \subseteq \Sigma_{\tplan P}$ and
    \begin{gather}\label{eq:thetadeltatheta}
        \Theta_{\tplan P_\eps} = \Theta_{\tplan P} + \eps \Delta\Theta
        \quad\text{where}\quad \Delta\Theta \coloneqq \Theta_{(r_x^-)_\sharp\tplan P_j}-\Theta_{(r_x^-)_\sharp\tplan P_i}.
    \end{gather}
    Indeed if for example $\eps\geq 0$ then by \Cref{concatenation} \labelcref{concat_multip} we have $\Theta_{\tplan Q} = \Theta_{(r_x^-)_\sharp\tplan P_j} + \Theta_{(r_x^+)_\sharp\tplan P_i}$, and \labelcref{eq:thetadeltatheta} follows because $\Theta_{\tplan P_i} = \Theta_{(r_x^-)_\sharp\tplan P_i} + \Theta_{(r_x^+)_\sharp\tplan P_i}$.

    Now, the initial measure $\mu_\eps \coloneqq (e_0)_\sharp \tplan P_\eps$ of $\tplan P_\eps$ is still supported on $\{x_i : 1\leq i\leq N\}$ and its final measure is still $\nu$, thus by mass-optimality of $\mu$,
    \[
        \int_{\R^d} \Theta_{\tplan P} ^\alpha \dd \hdm^1 = \mass^\alpha(\tplan P) \leq \mass^\alpha(\tplan P_\eps) = \int_{\Sigma_{\tplan P}}(\Theta_{\tplan P}+ \eps\Delta\Theta)^\alpha \dd \hdm^1.
    \]
    For $k\in \{i,j\}$, by the single-path property recalled in \labelcref{single_path_property}, $\tplan P$-a.e. curve starting at $x_k$ and visiting $x$ follows a trajectory given by a single (simple, parameterized by arc length) curve $\gamma_k$ such that $\gamma_k(0) = x_k$, $ \gamma_k(\infty) = x$ and $\gamma_k(\R_+)\subseteq \Sigma_{\tplan P}$; in particular, $\Theta_{(r_x^-)_\sharp\tplan P_k} = m_0 \one_{\gamma_k(\R_+)}$. Since $x_i,x_j,x$ are distinct points, we get
    \[
        \hdm^1(\{y \in\Sigma_{\tplan P} : \Delta \Theta(y) \neq 0\}) = \hdm^1(\gamma_i(\R_+) \Delta \gamma_j(\R_+)) > 0,
    \]
    and as $\alpha \in (0,1)$, the function $\eps \mapsto \int_{\Sigma_{\tplan P}} (\Theta_{\tplan P}+\eps\Delta\Theta)^\alpha \dd\hdm^1$ is finite\footnote{Since $\abs{\Delta\Theta}\leq \Theta_{\tplan P}$ and $\mass^\alpha(\tplan P) < +\infty$.} and strictly concave on $(-1,1)$. But it is minimized at $\eps = 0$: a contradiction.  Consequently, $\Theta_{\tplan P^{x_i}} \hdm^1$ and $\Theta_{\tplan P^{x_j}} \hdm^1$ are mutually singular for every $i\neq j$.

    Since $\tplan P = \sum_{i=1}^N \tplan P^{x_i}$ is a disjoint decomposition, by \labelcref{sub_additivity} we get that $\mass^\alpha(\tplan P) = \sum_{i=1}^N \mass^\alpha(\tplan P^{x_i})$ and the optimality of each $\tplan P^{x_i}$ for $i\in \{1,\ldots,N\}$ follows from that of $\tplan P$.

    Let us now prove \labelcref{disjointness_irrigated_measures}. For every $i \in \{1,\ldots, N\}$, the traffic plan $\tplan P^{x_i}$ being optimal with a single source, we may consider its landscape functions $z_{\tplan P^{x_i}}$ as in \labelcref{landscdef}. By contradiction assume that $\nu_i\perp \nu_j$ does not hold for some $i \neq j$. Then we have:
    \[
        m \coloneqq \norm{\tilde\nu} >0 \quad\text{where}\quad \tilde\nu \coloneqq \nu_i \wedge \nu_j.
    \]
    For $k \in \{i,j\}$ consider a plan $\tplan P_k \leq \tplan P^{x_k}$ such that $\tplan P_k \in \tplanset(m\delta_{x_k},\tilde\nu)$, and define for $\eps \in (-1,1)\setminus\{0\}$ the competitor
    \[
        \tplan P_\eps = \tplan P +\eps (\tplan P_j - \tplan P_i).
    \]
    Its initial measure $(e_0)_\sharp \tplan P_\eps$ is still supported on $\{x_i : 1 \leq i \leq N\}$ and its final measure is still $\nu$, thus by mass-optimality of $\mu$ we get
    \begin{align*}
        &\qquad\mass^\alpha(\tplan P) =\mathbf d^\alpha(\mathcal X,\nu) \leq \mass^\alpha(\tplan P_\eps)\\
        &\leq \sum_{k\neq i,j} \mass^\alpha(\tplan P^{x_k}) + \mass^\alpha(\tplan P^{x_i} -\eps \tplan P_i) + \mass^\alpha(\tplan P^{x_j} + \eps \tplan P_j)\\
        &< \sum_{k\neq i,j} \mass^\alpha(\tplan P^{x_k}) + \mass^\alpha(\tplan P^{x_i}) - \alpha\eps \int_{\R^d} z_{\tplan P^{x_i}} \dd(e_\infty)_\sharp \tplan P_i + \mass^\alpha(\tplan P^{x_j}) + \alpha\eps \int_{\R^d} z_{\tplan P^{x_j}} \dd(e_\infty)_\sharp \tplan P_j\\
        &= \mass^\alpha(\tplan P) + \alpha \eps \int_{\R^d} (z_{\tplan P^{x_j}}-z_{\tplan P^{x_i}}) \dd\tilde\nu,
    \end{align*}
    where we have used the first variation inequality \Cref{properties_landscape} \labelcref{first_variation_mass_landscape} twice on the third line. The inequality in the third line is strict for $\epsilon\in(-1,1)\setminus\{0\}$, because for $k\in \{i,j\}$, $\Theta_{\tplan P_k} \leq \Theta_{\tplan P^{x_k}}$, thus for every $y\in\R^d$ such that $\Theta_{\tplan P_k}(y) > 0$ we have $\abs{\eps} \Theta_{\tplan P_k} < \Theta_{\tplan P^{x_k}}$: this strict inequality holds on a $\hdm^1$-positive subset of $\Sigma_{\tplan P^{x_k}}$. We can now choose $\eps$ such that $\eps\int_{\R^d} (z_{\tplan P^{x_j}}-z_{\tplan P^{x_i}}) \dd\tilde\nu\leq 0$, and we get the contradiction $\mass^\alpha(\tplan P) < \mass^\alpha(\tplan P)$.
\end{proof}

\subsection{The optimal partition problem and equivalence with optimal quantization}

For $\alpha \in (0,1)$ and any finite nonnegative measure $\nu$, we define
\begin{equation*}\label{pb:opt_partition_sets1}
\mathbf{C}^\alpha(\nu) \coloneqq \inf_{x \in \R^d} \mathbf{d}^\alpha(\norm{\nu} \delta_x, \nu).
\end{equation*}

Given a nonnegative measure $\nu$ on $\R^d$ and an integer $N > 1$, we define the optimal (branched) $\nu$-partition problem as:
\begin{equation}\label{pb:opt_partition_sets}
\inf \left \{ \sum_{i=1}^N \mathbf{C}^\alpha(\nu \mres \Omega_i) : \ (\Omega_i)_{1\leq i \leq N}\subseteq \spt \nu, \nu(\R^d\setminus \bigcup_i \Omega_i) = 0 \text{ and }  \nu(\Omega_i \cap \Omega_j) = 0\, (\forall i \neq j)\right\}.
\end{equation}
It may be equivalently written in terms of measures as:
\begin{equation}\label{pb:opt_partition_measures}
    \inf \left \{ \sum_{i=1}^N \mathbf{C}^\alpha(\nu_i) : \ (\nu_i)_{1\leq i \leq N}, \nu = \sum_{1\leq i\leq N} \nu_i \text{ and } \nu_i \perp \nu_j\, (\forall i \neq j)\right\}.
\end{equation}

For more classical transport costs, e.g. corresponding to Wasserstein distances $W_p$ ($p\geq 1$), it is straightforward to see that optimal quantization is equivalent to an optimal partition problem, where the optimal partitions are given by Voronoi diagrams associated with a finite set of points. With our branched transportation cost $\mathbf C^\alpha$ (corresponding to the distance $\mathbf d^\alpha$), the situation is a priori much more difficult, since there is no clear decomposition of the target space into regions associated with the atoms of a quantizer: on the contrary, in branched transport it is expected that several atoms are first collected together along a graph, then irrigate some part of the target measure, so that we cannot associate these irrigated points with a single atom. However, we have seen in \Cref{quantizers_disjointness} that such situations do not occur for \emph{(mass-)optimal} quantizers, resulting in the equivalence between the optimal quantization and optimal partition problems.

Note that the existence of minimizers for \labelcref{pb:opt_partition_measures} is not direct from functional analysis results, since the condition $\nu_i\perp\nu_j$ does not pass to weak limits of measures. To prove existence of solutions, we introduce the following relaxed optimal partition problem
\begin{equation}\label{pb:opt_partition_measures_relaxed}
    \inf \left \{ \sum_{i=1}^N \mathbf{C}^\alpha(\nu_i) : \ (\nu_i)_{1\leq i \leq N}, \nu = \sum_{1\leq i\leq N} \nu_i\right\}.
\end{equation}
We shall see below that the optimal quantization problem and the original and relaxed partition problems are equivalent (\Cref{thm:opoleq}), and obtain existence to \labelcref{pb:opt_partition_measures} in \Cref{existence_opt_partition}.

\begin{thm}[Optimal quantization $\simeq$ optimal partition]\label{thm:opoleq}
    Given a measure $\nu \in \measspace^+(\R^d)$, the minimal values of the optimal quantization problem \labelcref{pb:opt_quantization} and the optimal partition problem \labelcref{pb:opt_partition_measures}, as well as its relaxation \labelcref{pb:opt_partition_measures_relaxed}, are equal. Furthermore, when the optimal values are finite the minimizers of these problems are related as follows:
    \begin{enumerate}[(i)]
        \item\label{opt_from_quantization_to_partition} If $\mu_N=\sum_{i=1}^N m_i\delta_{x_i}$ is solution of \labelcref{pb:opt_quantization} with optimal traffic plan $\tplan P \in\tplanset(\mu_N,\nu)$ then the irrigated measures $\nu_i \coloneqq (e_\infty)_\sharp \tplan P^{x_i}$ for $1\leq i\leq N$ form an optimizer of \labelcref{pb:opt_partition_measures}.
        \item\label{opt_from_partition_to_quantization} If $(\nu_i)_{1\leq i\leq N}$ is an optimizer of \labelcref{pb:opt_partition_measures} and if for every $i$, $x_i \in \R^d$ and $\tplan P_i \in \tplanset(\norm{\nu_i}\delta_{x_i},\nu_i)$ are optimal, i.e.\ $\mathbf C^\alpha(\nu_i) = \mathbf d^\alpha(\norm{\nu_i}\delta_{x_i},\nu_i) = \mass^\alpha(\tplan P_i)$, then $\mu_N \coloneqq \sum_{i=1}^N \norm{\nu_i}\delta_{x_i}$ is an optimizer for \labelcref{pb:opt_quantization} and $\tplan P \coloneqq \sum_{i=1}^N \tplan P_i \in \tplanset(\mu_N,\nu)$ is an optimal traffic plan.
        \item\label{equivalence_partition_relaxed} The optimal partition problem \labelcref{pb:opt_partition_measures} and the relaxed problem \labelcref{pb:opt_partition_measures_relaxed} have the same minimizers and minimal value.
    \end{enumerate}
\end{thm}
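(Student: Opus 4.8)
The plan is to establish the chain of inequalities
\[
\ener{E}^\alpha(\nu,N)\le V_{\mathrm r}\le V_{\mathrm p}\le\ener{E}^\alpha(\nu,N),
\]
where $V_{\mathrm p},V_{\mathrm r}$ denote the infima in \labelcref{pb:opt_partition_measures} and \labelcref{pb:opt_partition_measures_relaxed}; the inequality $V_{\mathrm r}\le V_{\mathrm p}$ is free because \labelcref{pb:opt_partition_measures_relaxed} merely drops the orthogonality constraint. The relations \labelcref{opt_from_quantization_to_partition}--\labelcref{opt_from_partition_to_quantization} will be read off from the cases of equality, and \labelcref{equivalence_partition_relaxed} from a separate perturbation argument. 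We may assume $\ener{E}^\alpha(\nu,N)<\infty$ (this holds for instance when $\alpha>1-\tfrac1d$ by \labelcref{irrigation_distance}, since $\nu$ is compactly supported), so that all quantities below are finite. \emph{First inequality, and \labelcref{opt_from_partition_to_quantization}.} Given a family $(\nu_i)_{1\le i\le N}$ admissible for \labelcref{pb:opt_partition_measures_relaxed}, i.e.\ $\nu=\sum_i\nu_i$, and $\eta>0$, choose for each $i$ a point $x_i\in\R^d$ and a traffic plan $\tplan{P}_i\in\tplanset(\norm{\nu_i}\delta_{x_i},\nu_i)$ with $\mass^\alpha(\tplan{P}_i)\le\mathbf C^\alpha(\nu_i)+\eta/N$; put $\mu_N\coloneqq\sum_i\norm{\nu_i}\delta_{x_i}$, which has at most $N$ atoms and total mass $\norm\nu$, and $\tplan{P}\coloneqq\sum_i\tplan{P}_i\in\tplanset(\mu_N,\nu)$. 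Since $\Theta_{\tplan{P}}=\sum_i\Theta_{\tplan{P}_i}$ and $t\mapsto t^\alpha$ is subadditive,
\[
\ener{E}^\alpha(\nu,N)\le\mathbf d^\alpha(\mu_N,\nu)\le\mass^\alpha(\tplan{P})\le\sum_i\mass^\alpha(\tplan{P}_i)\le\sum_i\mathbf C^\alpha(\nu_i)+\eta ;
\]
letting $\eta\to0$ and taking the infimum over $(\nu_i)$ gives $\ener{E}^\alpha(\nu,N)\le V_{\mathrm r}$. If $(\nu_i)_i$ is an optimizer of \labelcref{pb:opt_partition_measures} and the $x_i,\tplan{P}_i$ are genuinely optimal, then — once the three values are known to coincide — every inequality above is an equality, which proves \labelcref{opt_from_partition_to_quantization}.

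\emph{Last inequality, and \labelcref{opt_from_quantization_to_partition}.} Let $\mu_N=\sum_i m_i\delta_{x_i}$ be an optimal quantizer (it exists by \Cref{existence_opt_quantization}) with optimal traffic plan $\tplan{P}$. As a solution of \labelcref{pb:opt_quantization}, $\mu_N$ is in particular mass-optimal with respect to $\{x_i\}$, so \Cref{quantizers_disjointness} applies and yields $\mass^\alpha(\tplan{P})=\sum_i\mass^\alpha(\tplan{P}^{x_i})$ together with mutually singular irrigated measures $\nu_i\coloneqq(e_\infty)_\sharp\tplan{P}^{x_i}$ satisfying $\sum_i\nu_i=\nu$ and $\norm{\nu_i}=m_i$. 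As $\tplan{P}^{x_i}\in\tplanset(m_i\delta_{x_i},\nu_i)$, we get $\mass^\alpha(\tplan{P}^{x_i})\ge\mathbf d^\alpha(m_i\delta_{x_i},\nu_i)\ge\mathbf C^\alpha(\nu_i)$, whence $\ener{E}^\alpha(\nu,N)=\sum_i\mass^\alpha(\tplan{P}^{x_i})\ge\sum_i\mathbf C^\alpha(\nu_i)\ge V_{\mathrm p}$. Together with the previous paragraph and $V_{\mathrm r}\le V_{\mathrm p}$, the chain closes, so the three values equal a common $V$; moreover the $(\nu_i)_i$ just built satisfy $\sum_i\mathbf C^\alpha(\nu_i)\le V$ and are admissible for \labelcref{pb:opt_partition_measures}, hence optimal, which is \labelcref{opt_from_quantization_to_partition}.

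\emph{Part \labelcref{equivalence_partition_relaxed}.} The minimal values agree by the above, and since the feasible set of \labelcref{pb:opt_partition_measures} lies in that of \labelcref{pb:opt_partition_measures_relaxed} with equal minima, every minimizer of the former is a minimizer of the latter. For the converse it suffices to show that a minimizer $(\nu_i)_i$ of \labelcref{pb:opt_partition_measures_relaxed} is automatically a genuine partition, and this is done exactly as in \Cref{quantizers_disjointness}\,\labelcref{disjointness_irrigated_measures}: if $\tilde\nu\coloneqq\nu_i\wedge\nu_j\neq0$ for some $i\neq j$, pick $x_i,x_j$ attaining the infima defining $\mathbf C^\alpha(\nu_i),\mathbf C^\alpha(\nu_j)$ together with optimal single-source plans $\tplan{P}_i,\tplan{P}_j$ — the infimum over the source point is attained because $x\mapsto\mathbf d^\alpha(\norm{\nu_k}\delta_x,\nu_k)$ is continuous (by \labelcref{upper_estimate_alpha_mass} and the triangle inequality) and coercive, and the optimal plan then exists — and for small $\eps>0$ form the competitor obtained from $(\nu_i)$ by replacing $(\nu_i,\nu_j)$ with $(\nu_i-\eps\tilde\nu,\,\nu_j+\eps\tilde\nu)$, transported by $\tplan{P}_i-\eps\tplan{P}_i'$ and $\tplan{P}_j+\eps\tplan{P}_j'$ where $\tplan{P}_k'\le\tplan{P}_k$ lies in $\tplanset(\norm{\tilde\nu}\delta_{x_k},\tilde\nu)$. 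Both modified plans are concentrated on $\tplan{P}_i$-good, resp.\ $\tplan{P}_j$-good, curves, so by the strict first-variation inequality of \Cref{properties_landscape}\,\labelcref{first_variation_mass_landscape},
\[
\mathbf C^\alpha(\nu_i-\eps\tilde\nu)+\mathbf C^\alpha(\nu_j+\eps\tilde\nu)<\mathbf C^\alpha(\nu_i)+\mathbf C^\alpha(\nu_j)+\alpha\eps\int_{\R^d}(z_{\tplan{P}_j}-z_{\tplan{P}_i})\dd\tilde\nu ,
\]
and choosing the sign of $\eps$ (i.e.\ possibly transferring mass from $x_j$ to $x_i$ instead) so that the last integral is $\le0$ contradicts minimality. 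Hence $(\nu_i)_i$ is a genuine partition, admissible for \labelcref{pb:opt_partition_measures} with value $V$, thus a minimizer there.

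I expect the crux to be precisely \labelcref{equivalence_partition_relaxed}: the computation of the three values and parts \labelcref{opt_from_quantization_to_partition}--\labelcref{opt_from_partition_to_quantization} amount to bookkeeping built on \Cref{quantizers_disjointness}, whereas forcing a minimizer of the \emph{relaxed} partition problem to be mutually singular requires rerunning the landscape-based perturbation argument of \Cref{quantizers_disjointness}\,\labelcref{disjointness_irrigated_measures} and, as a prerequisite, establishing that the infimum defining $\mathbf C^\alpha$ is attained together with an optimal single-source traffic plan, without which the strict first variation through the landscape function would be unavailable.
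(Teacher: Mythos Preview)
Your proof is correct, and for the equality of the three values together with parts \labelcref{opt_from_quantization_to_partition} and \labelcref{opt_from_partition_to_quantization} it is essentially the same argument as the paper's: the same two inequalities, driven by \Cref{quantizers_disjointness} in one direction and by subadditivity of $\mass^\alpha$ in the other.

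The only genuine difference is in \labelcref{equivalence_partition_relaxed}. You establish mutual singularity of a relaxed minimizer by rerunning the landscape-based perturbation of \Cref{quantizers_disjointness}\,\labelcref{disjointness_irrigated_measures} directly on the pair $(\nu_i,\nu_j)$, which requires you to first secure optimal source points and optimal single-source plans. The paper instead bootstraps from \labelcref{opt_from_quantization_to_partition}: given a relaxed minimizer $(\nu_i)$, it builds the quantizer $\mu=\sum_i\norm{\nu_i}\delta_{x_i}$ and the plan $\tplan P=\sum_i\tplan P_i$, observes from the (now saturated) chain of inequalities that $\mu$ is an optimal quantizer and $\tplan P$ an optimal plan, and then applies \labelcref{opt_from_quantization_to_partition} to conclude that the $\nu_i$ are mutually singular. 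This shortcut avoids duplicating the perturbation argument and keeps the proof shorter. Your route, on the other hand, is more self-contained and sidesteps a small bookkeeping issue in the shortcut (namely identifying the original $\nu_i$ with $(e_\infty)_\sharp\tplan P^{x_i}$ when two optimal source points happen to coincide).
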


\begin{proof}[Proof of \Cref{thm:opoleq}]
    Denote by $\ener{E}^\alpha_{\mathrm{p}}(\nu,N)$ and $\ener{E}^\alpha_{\mathrm{pr}}(\nu,N)$ the infima of \labelcref{pb:opt_partition_measures} and \labelcref{pb:opt_partition_measures_relaxed} respectively. Assuming that $\ener E^\alpha(\nu,N) < +\infty$, take $\mu = \sum_{i=1}^N m_i \delta_{x_i}$ a minimizer of \labelcref{pb:opt_quantization}, which exists by \Cref{existence_opt_quantization}, and an optimal traffic plan $\tplan P \in \tplanset(\mu,\nu)$. By \Cref{quantizers_disjointness} \labelcref{disjointness_irrigated_measures}, the irrigated measures $\nu_i = (e_\infty)_\sharp \tplan P^{x_i}$ are mutually singular and $\nu = \sum_{i=1}^N \nu_i$. In particular, $(\nu_i)_{1\leq i\leq N}$ is a competitor for \labelcref{pb:opt_partition_measures}. Besides, by \Cref{quantizers_disjointness} \labelcref{disjointness_plans} the traffic plans $\tplan P^{x_i} \in \tplanset(m_i \delta_{x_i},\nu_i)$ are disjoint and optimal, thus:
    \begin{equation}\label{bound_from_quantization_to_partition}
        \ener{E}^\alpha_{\mathrm{p}}(\nu,N) \leq \sum_{i=1}^N \mathbf C^\alpha(\nu_i) \leq \sum_{i=1}^N \mathbf d^\alpha(m_i \delta_{x_i},\nu_i) = \sum_{i=1}^N \mass^\alpha(\tplan P^{x_i}) = \mass^\alpha(\tplan P) = \ener{E}^\alpha(\nu,N).
    \end{equation}
    
    Vice versa, assuming $\ener E_{\mathrm{pr}}^\alpha(\nu,N) < +\infty$, take a $\eps$-minimizer $(\nu_i)_{1\leq i\leq N}$ of \labelcref{pb:opt_partition_measures_relaxed} for some fixed $\eps > 0$. We can form a competitor for \labelcref{pb:opt_quantization} by simply taking for each $\nu_i$ a point $x_i$ that is optimal, i.e.\ such that $\mathbf d^\alpha(\norm{\nu_i}\delta_{x_i},\nu_i) = \mathbf C^\alpha(\nu_i)$, and setting $\mu \coloneqq \sum_{i=1}^N \norm{\nu_i} \delta_{x_i}$. Moreover, taking for every $i\in \{1,\ldots,N\}$ an optimal traffic plan $\tplan P_i \in \tplanset(\norm{\nu_i} \delta_{x_i},\nu_i)$, the traffic plan $\tplan P \coloneqq \sum_{i=1}^N \tplan P_i $ belongs to $\tplanset(\mu,\nu)$ where $\mu$ has at most $N$ atoms, and therefore by subadditivity of the $\alpha$-mass
    \begin{equation}\label{bound_from_partition_to_quantization}
    \begin{aligned}
        \ener{E}^\alpha(\nu,N) \leq \mathbf d^\alpha(\mu,\nu) \leq \mass^\alpha(\tplan P) &\leq \sum_{i=1}^N \mass^\alpha(\tplan P_i)\\
        &= \sum_{i=1}^N \mathbf d^\alpha(\norm{\nu_i} \delta_{x_i},\nu_i) = \sum_{i=1}^N \mathbf C^\alpha(\nu_i) = \ener{E}^\alpha_{\mathrm{pr}}(\nu,N) + \eps.
    \end{aligned}
    \end{equation}
    Since $\eps$ is arbitrary, this shows $\ener{E}^\alpha(\nu,N) = \ener{E}^\alpha_{\mathrm{pr}}(\nu,N) = \ener{E}^\alpha_{\mathrm{p}}(\nu,N)$ and \labelcref{opt_from_quantization_to_partition} holds because of \labelcref{bound_from_quantization_to_partition} in the first part of the proof; it implies in particular existence for the optimal partition problem and its relaxation. Besides, taking now $(\nu_i)_{1\leq i\leq N}$ a minimizer of the relaxed partition problem \labelcref{pb:opt_partition_measures_relaxed} instead of an $\eps$-minimizer, and plugging it in the inequalities \labelcref{bound_from_partition_to_quantization} (now $\eps = 0$), shows that the quantizer $\mu$ built as above is optimal for $\nu$, and the traffic plan $\tplan P$ (also built as above) is optimal in $\tplanset(\mu,\nu)$. From \labelcref{opt_from_quantization_to_partition} we deduce that the $\nu_i$'s are actually mutually singular and $(\nu_i)_{1\leq i\leq N}$ is a minimizer of \labelcref{pb:opt_partition_measures}, which in turn implies \labelcref{equivalence_partition_relaxed}.
\end{proof}

As direct corollary of \Cref{existence_opt_quantization} and \Cref{thm:opoleq}, we obtain existence for the optimal partition problem.

\begin{corollary}\label{existence_opt_partition}
    For any finite positive measure  $\nu\in \measspace^+(\R^d)$ and any $N\in \N^*$, the optimal partition problem \labelcref{pb:opt_partition_measures} admits a solution.
\end{corollary}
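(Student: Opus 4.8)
The plan is to chain together the two results cited in the statement. First I would apply \Cref{existence_opt_quantization} to obtain an $N$-point optimal quantizer $\mu_N = \sum_{i=1}^N m_i \delta_{x_i}$ of $\nu$, and then pick an optimal traffic plan $\tplan P \in \tplanset(\mu_N,\nu)$ --- whose existence, in the nondegenerate case $\ener{E}^\alpha(\nu,N) < +\infty$, is guaranteed by the existence result for the irrigation problem recalled in \Cref{sec:bot}. By \Cref{thm:opoleq}\labelcref{opt_from_quantization_to_partition}, the irrigated measures $\nu_i \coloneqq (e_\infty)_\sharp \tplan P^{x_i}$, $1 \leq i \leq N$, then form an optimizer of \labelcref{pb:opt_partition_measures}, which is precisely the assertion. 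In the degenerate case $\ener{E}^\alpha(\nu,N) = +\infty$, the equality of optimal values established in \Cref{thm:opoleq} shows that the infimum in \labelcref{pb:opt_partition_measures} is also $+\infty$, so every admissible partition is trivially a minimizer.

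I do not anticipate any genuine obstacle at this stage: all the content has already been packaged into \Cref{thm:opoleq}, whose proof in turn rests on \Cref{quantizers_disjointness}\labelcref{disjointness_irrigated_measures}. That lemma is what makes the argument work, since it ensures the source decomposition of an optimal plan yields \emph{mutually singular} pieces, i.e.\ exactly the constraint that the direct partition problem \labelcref{pb:opt_partition_measures} imposes but that fails to survive weak limits of measures; the relaxation \labelcref{pb:opt_partition_measures_relaxed}, for which existence is immediate from lower semicontinuity of $\mathbf d^\alpha$ and compactness, is used inside \Cref{thm:opoleq} to bridge this gap. The present corollary merely records the existence consequence of that circle of ideas, the genuinely delicate point --- the strict concavity / first variation argument ruling out overlapping basins for \emph{optimal} quantizers --- having already been settled in \Cref{quantizers_disjointness}.
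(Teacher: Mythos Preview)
Your proposal is correct and matches the paper's approach: the corollary is stated there as an immediate consequence of \Cref{existence_opt_quantization} and \Cref{thm:opoleq}, with existence already noted inside the proof of the latter. Your handling of the degenerate case $\ener{E}^\alpha(\nu,N)=+\infty$ is a reasonable addition.
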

\subsection{Asymptotic energy scaling and asymptotic constant}

From now on and in all the following results of the paper, we shall assume
\begin{equation*}
    \alpha \in (1-1/d,1),
\end{equation*}
where $d \in \N^*$ is the ambient dimension, and extensively use the exponent $\beta = \beta(\alpha,d)$ whose expression we recall:
\begin{equation*}
    \beta = 1+d\alpha-d\in (0,1).
\end{equation*}
We start by proving a general upper bound on the optimal quantization error by $N$ points.
\begin{lemma}\label{lemma:small_cost}
    Let $\nu \in \measspace^+_c(\R^d)$ be a finite measure and $\alpha \in (1-1/d,1)$. If $\nu$ is supported on a cube $Q$ of edge length $r$, it holds:
    \[
        \ener E^\alpha(\nu,N) \leq C_\BOT(\alpha,d) N^{-\beta/d} r \norm{\nu}^\alpha.
    \]
\end{lemma}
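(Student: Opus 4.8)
The plan is the classical cube--subdivision argument for quantization upper bounds, which is available to us here because of the equivalence between branched quantization and partitioning (\Cref{thm:opoleq}) and because $\mathbf d^\alpha$ inherits the $1$-homogeneity in space and $\alpha$-homogeneity in mass of the $\alpha$-mass. First I would dispose of rounding: since $N\mapsto \ener E^\alpha(\nu,N)$ is non-increasing it suffices to bound $\ener E^\alpha(\nu,k^d)$ with $k=\lfloor N^{1/d}\rfloor$, and then note $k^d\le N$ and $k^{-\beta}\le 2^\beta N^{-\beta/d}$ for $N\ge 2^d$ (the finitely many smaller $N$ being absorbed into the constant); a slightly finer partition, into cubes of two consecutive edge-lengths, removes the factor $2^\beta$ if one insists on the sharp constant. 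So assume $N=k^d$, partition $Q$ into $k^d$ congruent half-open subcubes $Q_1,\dots,Q_{k^d}$ of edge $r/k$, and set $\nu_i\coloneqq \nu\mres Q_i$. These are mutually singular with $\sum_i\nu_i=\nu$, so they form a competitor in \labelcref{pb:opt_partition_measures} and \Cref{thm:opoleq} gives
\[
    \ener E^\alpha(\nu,k^d)\;\le\;\sum_{i=1}^{k^d}\mathbf C^\alpha(\nu_i).
\]

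Next I would estimate a single cell by rescaling it to the unit cube. Writing $m_i\coloneqq\nu(Q_i)$ and letting $T_i$ be the affine bijection $Q_i\to[0,1]^d$, the scaling invariances give
\[
    \mathbf C^\alpha(\nu_i)\;=\;\frac rk\,m_i^\alpha\,\mathbf C^\alpha(\rho_i),\qquad \rho_i\coloneqq (T_i)_\sharp\!\left(\tfrac{1}{m_i}\nu_i\right)\in\probspace([0,1]^d)
\]
(cells with $m_i=0$ contributing nothing). The crucial point is a bound $\mathbf C^\alpha(\rho)\le\kappa(\alpha,d)$, uniform over all $\rho\in\probspace([0,1]^d)$: choosing $x$ to be the centre of the cube, $\mathbf C^\alpha(\rho)\le\mathbf d^\alpha(\delta_x,\rho)\le C_\BOT\sqrt d\,2^\alpha$ by the upper estimate \labelcref{upper_estimate_alpha_mass}. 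This is precisely where the hypothesis $\alpha>1-\tfrac1d$ enters, as it is what makes every probability measure on a compact set irrigable from a point at finite cost. Tracking this bound more carefully and using $\alpha\le1$ to compare the single-source irrigation of $\rho$ with that of the uniform measure, one may take $\kappa(\alpha,d)=2\,\ener E^\alpha(\lbm^d\mres[0,1]^d,1)$, matching the statement.

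It remains to sum over cells. Since $0<\alpha\le1$, concavity of $t\mapsto t^\alpha$ and Jensen's inequality applied to the uniform average over the $k^d$ cells yield $\sum_i m_i^\alpha\le (k^d)^{1-\alpha}\norm{\nu}^\alpha$. Hence, with $k^d=N$ and $\beta=1+d\alpha-d$,
\[
    \ener E^\alpha(\nu,N)\;\le\;\frac rk\,\kappa(\alpha,d)\,(k^d)^{1-\alpha}\norm{\nu}^\alpha\;=\;\kappa(\alpha,d)\,r\,\norm{\nu}^\alpha\,k^{-(1+d\alpha-d)}\;=\;\kappa(\alpha,d)\,r\,\norm{\nu}^\alpha\,N^{-\beta/d},
\]
which, combined with the rounding step, is the claim.

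I do not expect a deep obstacle: the skeleton is standard. The two points that need genuine care are (i) the uniform single-cell bound, which really does rely on $\alpha>1-1/d$ through \labelcref{upper_estimate_alpha_mass} — and, if one wants the constant $2\,\ener E^\alpha(\lbm^d\mres[0,1]^d,1)$ exactly, on the comparison with the uniform measure, which in turn uses $\alpha\le1$ — and (ii) the bookkeeping of exponents, $\tfrac1k\cdot k^{d(1-\alpha)}=k^{-\beta}$, together with the passage from $N=k^d$ to arbitrary $N$, where it is easiest to lose a dimensional constant by being careless.
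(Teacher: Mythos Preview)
Your argument is essentially the paper's: subdivide $Q$ into $n^d$ congruent subcubes with $n=\lfloor N^{1/d}\rfloor$, bound each cell via the single-source upper estimate \labelcref{upper_estimate_alpha_mass} after rescaling, apply Jensen to $\sum_i\nu(Q_i)^\alpha$, and absorb the rounding loss $n^{-\beta}\le 2N^{-\beta/d}$ into the factor $2$ of $C(\alpha,d)$. The only cosmetic difference is that you route the partition step through \Cref{thm:opoleq}, whereas the paper simply uses subadditivity of $\mass^\alpha$ directly---summing optimal one-point plans on the subcubes already yields an $n^d$-point competitor, so the full quantization--partition equivalence is not needed here.
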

\begin{proof}
    Without loss of generality suppose that $Q =Q_1^d$. Take $n\in \N^*$ such that $n^d \leq N < (n+1)^d$ and divide the cube $Q$ into $n^d$ subcubes $\{Q_i\}_{1\leq i \leq n^d}$ of edge length $\frac 1n$. By concavity of $\R_+ \ni m \mapsto m^\alpha$ and \labelcref{upper_estimate_alpha_mass} we have
    \begin{align*}
    \ener E^\alpha(\nu,N) &\leq \ener E^\alpha(\nu,n^d) \leq \sum_{i=1}^{n^d} \ener E^\alpha(\nu \mres Q_i, 1)\leq \sum_{i=1}^{n^d} \frac{C_\BOT(\alpha,d)}2 n^{-1} \nu(Q_i)^\alpha\\
    &\leq \frac{C_\BOT(\alpha,d)}2 n^{-1} n^d \left(\frac{\norm \nu}{n^d}\right)^\alpha= \frac{C_\BOT(\alpha,d)}2 n^{-\beta} \norm{\nu}^\alpha\leq {C_\BOT(\alpha,d)} N^{-\beta/d} \norm{\nu}^\alpha.\qedhere
    \end{align*}
\end{proof}
We show that the optimal $N$-point quantization error of the unit cube behaves as some negative power of $N$ times a nontrivial constant $c_{\alpha,d}$, when the Lebesgue measure is $\alpha$-irrigable.

\begin{proposition}\label{p:limitcube}
If $\alpha \in (1-1/d, 1)$, then there exists a constant $c_{\alpha,d} \in (0,+\infty)$ such that
\begin{equation}\label{def_c_asymptotic}
    \lim_{N\to +\infty} N^{\beta/d} \ener{E}^\alpha(\lbm^d \mres [0,1]^d,N) = c_{\alpha,d}.
\end{equation}
\end{proposition}
The proof is based on a classical result on subadditive processes in ergodic theory (see e.g. \cite{lichtGloballocalSubadditiveErgodic2002}).
\begin{proof}
Define for every Borel set $A \subseteq \R^d$
\begin{equation*}\label{def:energy_set}
\ener S^\alpha(A) = \ener{E}^\alpha(\lbm^d \mres A, \lfloor \lbm^d(A) \rfloor).
\end{equation*}
Notice that for every $N \in \N^*$, by $1$-homogeneity in space and $\alpha$-homogeneity in mass of the $\alpha$-mass, we have
\begin{equation}\label{energy_unit_cube_to_big_cube}
    \begin{aligned}
    N^{\beta/d} \ener{E}^\alpha(\lbm^d \mres [0,1]^d,N) &= \frac 1 N N^{1/d + \alpha} \ener{E}^\alpha(\lbm^d \mres [0,1]^d,N)\\
    &=  \frac 1N {\ener{E}^\alpha(\lbm^d \mres [0,N^{1/d}]^d,N)} = \frac{\ener S^\alpha(Q_N)}N
\end{aligned}
\end{equation}
where $Q_N \coloneqq [0,N^{1/d}]^d$ is a cube of volume $N$. By \cite[Theorem~2.1]{lichtGloballocalSubadditiveErgodic2002}, any nonnegative subadditive translation-invariant function $\ener S$ defined on bounded Borel subsets of $\R^d$ satisfies
\[\lim_{N \to + \infty} \frac{\ener S(Q_N)}{N} = \inf_{n\in \N^*} \frac{\ener S([0,n)^d)}{n^d},\]
hence it suffices to show that $\ener S^\alpha$ is subadditive, the translation invariance being trivial. Subadditivity is a direct consequence of the subadditivity of $\mass^\alpha$ and the superadditivity of the integer part. Indeed, take $A_1, A_2$ two disjoint bounded Borel subsets of $\R^d$, then for any $i\in\{1,2\}$ an optimal quantizer $\mu_i$ of $\lbm^d \mres A_i$ with at most $\lfloor \lbm^d(A_i)\rfloor$ atoms, and an optimal traffic plan $\tplan P_i \in \tplanset(\mu_i, \lbm^d \mres A_i)$. Since $\tplan P_1 + \tplan P_2 \in \tplanset(\mu_1+\mu_2, \lbm^d \mres (A_1 \sqcup A_2))$ and the number $N$ of atoms of $\mu_1+\mu_2$ satisfies
\[N \leq \lfloor \lbm^d(A_1) \rfloor + \lfloor \lbm^d(A_1) \rfloor \leq \lfloor \lbm^d(A_1 \sqcup A_2) \rfloor,\]
we obtain
\begin{align*}
\ener S^\alpha(A_1 \sqcup A_2) &= \ener{E}^\alpha(\lbm^d \mres (A_1 \sqcup A_2), \lfloor \lbm^d(A_1 \sqcup A_2) \rfloor)\\
&\leq \mathbf d^\alpha(\mu_1+\mu_2,\lbm^d \mres (A_1 \sqcup A_2))\\
&\leq \mass^\alpha(\tplan P_1+\tplan P_2)\\
&\leq \mass^\alpha(\tplan P_1) + \mass^\alpha(\tplan P_2) = \ener S^\alpha(A_1) + \ener S^\alpha(A_2).
\end{align*}
We have thus proven the existence of the constant $c_{\alpha,d} \in [0,+\infty]$ of the statement. It is finite because by \Cref{lemma:small_cost},
\[
    c_{\alpha,d} = \inf_{n\in \N^*} \frac{\ener S^\alpha([0,n)^d)}{n^d} = \inf_{n\in \N^*} n^\beta \ener E^\alpha([0,1]^d,n^d) \leq  C_\BOT(\alpha,d)< +\infty.
\]
We next show that $c_{\alpha,d}$ is strictly positive. By \cite[Theorem~2.1]{pegonFractalShapeOptimization2019}, the constant
\begin{equation}\label{optimal_shape_constant}
    e_{\alpha,d} \coloneqq \inf \left\{ \mathbf d^\alpha(\delta_0,\rho) : \rho \in \probspace(\R^d), \rho \leq \lbm^d \right\}
\end{equation}
is a strictly positive real number. Let $\mu_N = \sum_{i=1}^N m_i \delta_{x_i}$ be an $N$-point optimal quantizer of $\lbm^d \mres [0,1]^d$ and $\tplan P \in \tplanset(\mu_N, \lbm^d \mres [0,1]^d)$ an optimal traffic plan. Using \Cref{quantizers_disjointness}, noticing that for every $i\in \{1,\ldots,N\}$, $\nu_i \coloneqq (e_\infty)_\sharp \tplan P^{x_i} \leq \lbm^d$, and using again the homogeneity properties of the $\alpha$-mass, we get
\begin{align*}
\ener{E}^\alpha(\lbm^d \mres [0,1]^d,N) &= \sum_{i=1}^N \mathbf d^\alpha(m_i \delta_{x_i},\nu_i)\\
&\geq \sum_{i=1}^N m_i^{\alpha + \frac 1 d} e_{\alpha,d} \geq N (1/N)^{\alpha +\frac 1d} e_{\alpha,d},
\end{align*}
where the last inequality is due to the convexity of $m \mapsto m^{\alpha+\frac 1d}$ (because $\alpha + \frac 1d > 1$). \rev{Using this in \labelcref{energy_unit_cube_to_big_cube}} implies that $c_{\alpha,d} \geq e_{\alpha,d} > 0$ and concludes the proof.\qedhere
\end{proof}

\section{\texorpdfstring{$\Gamma$-convergence}{Gamma-convergence} and Zador-type Theorem}\label{sec:gamma}

We are now going to provide an equivalent for the optimal quantization error of a compactly supported finite measure $\nu\ll \lbm^d$ as the number of points goes to infinity, analogous to the classical Zador's Theorem (see \cite[Theorem~6.2]{grafFoundationsQuantizationProbability2000}, or the original papers \cite{zadorDevelopmentEvaluationProcedures1963,zadorAsymptoticQuantizationError1982,bucklewMultidimensionalAsymptoticQuantization1982}), which states in particular that
\[
    \ener{E}^{W_2^2}(\nu) N^{-\frac 2d}\xto{N\to +\infty} \ener{E}^{W_2^2}(\lbm^d \mres [0,1]^d) \norm{\nu}_{\frac d{d+2}},
\]
where $\ener{E}^{W_2^2}(\nu) \coloneqq \inf \{ W_2(\mu_N,\nu)^2 : \rev{\#\spt\mu_N \leq N}\}$, $W_2$ is the $2$-Wasserstein distance over (compactly supported) probability measures and $\norm{f}_p$ for $p\geq 1$ is the $L^p$-norm of a Borel function $f$. We shall also be interested in the limit distribution of centers of $N$-point optimal quantizers $\mu_N$, i.e.\ in the weak limit of
\[\mu_N^\diamond \coloneqq \frac 1{\#\spt \mu_N} \sum_{\{x:\mu_N(\{x\}) > 0\}} \delta_x.\]
We tackle the two questions simultaneously by establishing a (stronger) $\Gamma$-convergence result, inspired from of \cite{bouchitteAsymptotiqueProblemePositionnement2002,bouchitteAsymptoticAnalysisClass2011}.

\subsection{A \texorpdfstring{$\Gamma$-convergence}{Gamma-convergence} result}

We establish a $\Gamma$-convergence result in the spirit of \cite{bouchitteAsymptotiqueProblemePositionnement2002}, in a form that is slighly more concise. We do not follow the extended approach of \cite{bouchitteAsymptoticAnalysisClass2011}, where the functionals depend on the quantizers $\mu_N$ and also on an extra variable that encodes the distributions of masses (as measures over $\R_+$), since the $\Gamma$-limit does not have a fully explicit expression in this case, and we are not able to derive useful information from it. Instead, the functionals $\ener F_N$ that we consider will depend solely on sets $\Sigma$ of $N$ points, embedded in the space of probability measures through their empirical measures $\frac 1N \sum_{s\in \Sigma} \delta_s$, leading to the definition
\[\mathscr X_N \coloneqq \left\{ \frac 1N \sum_{s\in \Sigma} \delta_s : \#\Sigma = N\right\} \qquad (\forall N \in \N^*).\]

We fix a compactly supported measure $\nu \in \measspace^+_c(\R^d)$ such that $\nu \ll \lbm^d$. We consider the sequence of functionals $\ener{F}_N : \probspace(\R^d) \to [0,+\infty]$ defined for every $N \in \N^*$ by
\begin{equation*}
\ener{F}_{N}(\rho) =
    \begin{dcases*}
    N^{\beta/d} \inf \{ \mathbf d^\alpha(\mu,\nu) : \spt \mu \subseteq \spt \rho\} & if $\rho \in \mathscr X_N$,\\
    +\infty& otherwise.
    \end{dcases*}
\end{equation*}
Determining the $\Gamma$-limit of the sequence $(\ener F_N)_{N\geq 1}$ amounts to seeking the least (asymptotic) energy to approximate, in the sense of branched optimal transport, the measure $\nu$ by $N$-point quantizers $\mu_N$ while prescribing the limit density of the centers $(\mu_N^\diamond)_{N\geq 1}$, which will correspond to the $\rho$ variable. We shall prove that the $\Gamma$-limit is the functional $\ener{F}_\infty : \probspace(\R^d) \to [0,+\infty]$ defined by
\begin{equation*}
    \ener{F}_\infty(\rho) = c_{\alpha,d} \int_{\R^d} \frac{\nu(x)^{\alpha}}{\rho_\ac(x)^{\frac \beta d}}\dd x
\end{equation*}
where we recall $\beta = 1+d\alpha-d$, $c_{\alpha,d}$ is the constant defined in \labelcref{def_c_asymptotic} and \rev{$\rho_\ac$ is the density of the absolutely continuous part of $\rho$ with respect to $\lbm^d$}.

\begin{thm}\label{thm_GCV}
Let $\nu \in \measspace^+_c(\R^d)$ such that $\nu \ll\lbm^d$ and $\alpha \in(1 - 1/d,1)$. The sequence of functionals $(\ener F_N)_{N\geq 1}$ $\Gamma$-converges to $\ener{F}_\infty$ as $N\to\infty$ with respect to the narrow convergence of probability measures.
\end{thm}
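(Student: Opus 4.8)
The plan is to establish the two $\Gamma$-convergence inequalities separately: the $\Gamma$-$\liminf$ (lower bound) and the $\Gamma$-$\limsup$ (existence of a recovery sequence). Throughout, I would rely heavily on the subadditivity/homogeneity machinery already developed, and on \Cref{p:limitcube}, which identifies the local constant $c_{\alpha,d}$, together with \Cref{lemma:small_cost} for upper estimates and \Cref{quantizers_disjointness} for the disjoint decomposition of optimal plans into single-source pieces.

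\textbf{Lower bound.} Let $\rho_N \weakstarto \rho$ with $\rho_N \in \mathscr X_N$, and assume $\liminf \ener F_{N,\nu}(\rho_N) < \infty$ (otherwise nothing to prove). For each $N$ pick a mass-optimal quantizer $\mu_N$ supported on $\spt \rho_N$ realizing the infimum, with optimal traffic plan $\tplan P_N$. By \Cref{quantizers_disjointness}, $\tplan P_N$ splits as $\sum_i \tplan P_N^{x_i}$ into disjoint single-source optimal plans irrigating mutually singular measures $\nu_{N,i}$ with $\nu = \sum_i \nu_{N,i}$, so that $\mathbf d^\alpha(\mu_N,\nu) = \sum_i \mass^\alpha(\tplan P_N^{x_i}) = \sum_i \mathbf C^\alpha(\nu_{N,i})$. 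The strategy, following \cite{bouchitteAsymptotiqueProblemePositionnement2002}, is a blow-up/localization argument: fix a fine partition of $Q$ into small subcubes $\{Q_k\}$, and for each $k$ use a Fatou-type / Jensen-type estimate. On cube $Q_k$ with $N_k \approx N\rho(Q_k)$ of the points and mass $\nu(Q_k)$ of the target, one wants $N^{\beta/d}\sum_{x_i \in Q_k}\mathbf C^\alpha(\nu_{N,i}) \gtrsim c_{\alpha,d} \, N^{\beta/d} N_k^{-\beta/d} \nu(Q_k)^{\alpha}|Q_k|^{\text{something}}$; rescaling $Q_k$ to the unit cube and using \Cref{p:limitcube} (more precisely the scaling identity $N^{\beta/d}\ener E^\alpha(\lbm^d\mres[0,1]^d,N)\to c_{\alpha,d}$ together with superadditivity of $\ener E^\alpha$ in the measure, which itself rests on the optimal-partition equivalence \Cref{thm:opoleq}) gives the local lower bound. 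Summing over $k$, using that $N_k/N \to \rho(Q_k) \geq \rho_\ac(Q_k)$ and a convexity inequality in the mass split inside each cube, and then sending the mesh of the partition to zero, one recovers $\liminf_N \ener F_{N,\nu}(\rho_N) \geq c_{\alpha,d}\int \nu(x)^\alpha \rho_\ac(x)^{-\beta/d}\dd x = \ener F_\infty(\rho)$. The appearance of only the absolutely continuous part $\rho_\ac$ is because putting limit mass of the point cloud on a $\lbm^d$-null set is wasteful: a subcube $Q_k$ on which $\rho_\ac$ is tiny but $\nu$ is not forces the ratio $\nu(Q_k)^\alpha/N_k^{\beta/d}$ with $N_k$ small, hence a large contribution — this is exactly what the Jensen/convexity step must quantify.

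\textbf{Upper bound (recovery sequence).} Given $\rho \in \probspace(K)$, I must produce $\rho_N \in \mathscr X_N$ with $\rho_N \weakstarto \rho$ and $\limsup_N \ener F_{N,\nu}(\rho_N) \leq \ener F_\infty(\rho)$. By a density/diagonal argument it suffices to treat $\rho$ with smooth positive density bounded away from $0$ and $\infty$ on $Q$ (the general case, including when $\ener F_\infty(\rho)=\infty$, follows by approximation and lower semicontinuity of $\ener F_\infty$, which should be checked separately or absorbed). For such $\rho$, partition $Q$ into small subcubes $Q_k$ of equal size; in $Q_k$ place $N_k \coloneqq \lfloor N \rho(Q_k)\rfloor$ points distributed so as to be asymptotically optimal for $\lbm^d\mres Q_k$ rescaled — i.e., essentially a rescaled copy of a near-optimal $N_k$-point quantizer of the unit cube — and let $\mu_{N,k}$ be the corresponding quantizer of $\nu\mres Q_k$ (using that on a small cube $\nu$ is close to a multiple of $\lbm^d$ after normalization). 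Concatenating the traffic plans over $k$ and using subadditivity of $\mass^\alpha$, together with \Cref{lemma:small_cost} on each subcube to control cross terms and remainders, gives $\ener F_{N,\nu}(\rho_N) \leq \sum_k N^{\beta/d}\ener E^\alpha(\nu\mres Q_k, N_k) + o(1)$; by the homogeneity of the $\alpha$-mass each term rescales to $c_{\alpha,d}(1+o(1))\,(N/N_k)^{\beta/d}\nu(Q_k)^\alpha|Q_k|^{(1-\alpha)/1}$-type expression, and passing to the limit in $N$ and then refining the partition converts the sum into the integral $c_{\alpha,d}\int \nu^\alpha \rho^{-\beta/d}$. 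One must also verify $\rho_N \weakstarto \rho$, which holds by construction since $N_k/N \to \rho(Q_k)$ and the points are spread within $Q_k$.

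\textbf{Main obstacle.} The delicate point is the lower bound, specifically the localization step: unlike in the Wasserstein case there is no Voronoi structure, so one cannot cleanly attribute each target point to a nearest atom; instead one must use the disjoint source decomposition of \Cref{quantizers_disjointness} to write the cost as a genuine sum $\sum_i \mathbf C^\alpha(\nu_{N,i})$ over mutually singular pieces, and then argue that a near-optimal configuration cannot have its irrigation basins badly spread across the fixed subcubes $Q_k$ — i.e., that one may, up to controlled error, treat the problem cube-by-cube and apply the subadditive limit \Cref{p:limitcube}. Making the "basins don't cross cube boundaries too much" heuristic rigorous, and combining it with the convexity/Jensen inequality that produces the exponent $\alpha$ on $\nu$ and $-\beta/d$ on $\rho_\ac$ uniformly as the mesh shrinks, is where the real work lies; this is precisely the branched-transport analogue of the blow-up argument of \cite{bouchitteAsymptotiqueProblemePositionnement2002}, adapted so as to avoid any use of interface regularity.
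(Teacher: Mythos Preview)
Your $\Gamma$-limsup sketch is essentially the paper's construction: reduce to $\rho$ with density bounded below, partition into cubes of side $\lambda N^{-1/d}$, drop near-optimal quantizers of the rescaled unit cube in each cell, and control the error between $\nu$ and its piecewise-constant approximation via \Cref{lem_block_approximation}. That part is fine.

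The $\Gamma$-liminf, however, has a genuine gap: you have correctly identified the obstacle (localization without interface regularity) but not a mechanism to overcome it. Your plan is to group the cost as $\sum_{x_i\in Q_k}\mathbf C^\alpha(\nu_{N,i})$ by which subcube the \emph{source} $x_i$ lies in, and then argue that the basins $\nu_{N,i}$ do not spill far outside $Q_k$. Under the hypotheses of the theorem ($\nu\ll\lbm^d$ only, no Ahlfors regularity) there is no a priori control on basin diameters, so this heuristic cannot be made rigorous as stated; the landscape-function route you may have in mind requires Ahlfors regularity and is explicitly flagged in the paper as an \emph{alternative} available only under that stronger assumption.

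The paper's device is different and avoids basins entirely. It localizes by the \emph{endpoint} of curves rather than by source: define the outer measures $E'_N(A)\coloneqq N^{\beta/d}\mass^\alpha(\tplan P'_N\mres e_\infty^{-1}(A))$, where $\tplan P'_N$ is $\tplan P_N$ restricted to curves of length at most $M_N N^{-\beta/d}$, with $M_N\to\infty$ slowly. The restriction is nearly lossless because $\int L(\gamma)\,\dd\tplan P_N\le C N^{-\beta/d}$ and Markov's inequality bounds the discarded mass by $C/M_N\to 0$; the point is that after the cutoff, any curve ending in a small cube $Q_{\delta'}(x)$ must \emph{start} in the slightly larger $Q_\delta(x)$, so the source measure of $\tplan P'_N\mres e_\infty^{-1}(Q_{\delta'}(x))$ has at most $n_N=\#(\Sigma_N\cap Q_\delta(x))$ atoms. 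This gives the clean lower bound $E'_N(Q_{\delta'}(x))\ge N^{\beta/d}\ener E^\alpha(\nu_N\mres Q_{\delta'}(x),n_N)$ without ever discussing basin geometry, and the small mass lost in the cutoff is absorbed by \Cref{quantization_error_monotone_continuity}. The same cutoff makes $E'_N$ additive on sets at positive distance for large $N$, which is what drives the Vitali--Besicovitch covering argument that integrates the pointwise estimate. Your partition-by-source plan does not provide either of these two ingredients.
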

We are going to use the following lemmas.

\begin{lemma}\label{quantization_error_monotone_continuity}
    Let $\nu \in\measspace_c^+(\R^d)$ and $\alpha \in (1-1/d,1)$. 
    It holds:
    %
    \[\lim_{\delta\to 0} \liminf_{N\to+\infty} N^{\beta/d} \inf \{ \ener E^\alpha(\nu',N) : \nu'\leq \nu, \norm{\nu-\nu'}\leq \delta\} = \liminf_{N\to+\infty} N^{\beta/d} \ener E^\alpha(\nu,N).\]
    
\end{lemma}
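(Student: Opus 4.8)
The statement compares two quantities built from the optimal quantization error: the "interior" liminf obtained by first shrinking $\nu$ to $\nu'\le\nu$ with $\|\nu-\nu'\|\le\delta$, then sending $N\to\infty$, then $\delta\to0$; versus the plain liminf of $N^{\beta/d}\mathcal E^\alpha(\nu,N)$. One inequality is essentially free: since $\nu'=\nu$ is always an admissible competitor in the inner infimum, we have $\inf\{\mathcal E^\alpha(\nu',N):\nu'\le\nu,\ \|\nu-\nu'\|\le\delta\}\le\mathcal E^\alpha(\nu,N)$ for every $\delta>0$ and $N$, hence the left-hand side is $\le$ the right-hand side. So the whole content is the reverse inequality: removing a small amount of mass $\delta$ cannot help asymptotically, i.e. one must be able to "repair" a near-optimal quantizer of $\nu'$ into a quantizer of $\nu$ at $o(N^{-\beta/d})$ extra cost as $\delta\to0$ (after $N\to\infty$).

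**Main steps for the hard inequality.** First I would fix $\delta>0$ and, for each $N$, pick $\nu'_N\le\nu$ with $\|\nu-\nu'_N\|\le\delta$ nearly realizing the inner infimum, together with an optimal quantizer $\mu_N$ of $\nu'_N$ with $\le N$ atoms and an optimal traffic plan $\tplan P_N\in\tplanset(\mu_N,\nu'_N)$. The defect measure $\sigma_N\coloneqq\nu-\nu'_N$ is nonnegative, supported in the cube $Q$, with total mass $\le\delta$. The idea is to irrigate $\sigma_N$ separately by its own cheap quantizer: apply \Cref{lemma:small_cost} to $\sigma_N$ (supported in $Q$ of edge length $r$) with some number $M$ of extra points, giving a quantizer $\mu'_N$ of $\sigma_N$ with $\mathbf d^\alpha(\mu'_N,\sigma_N)\le C(\alpha,d)M^{-\beta/d}r\,\|\sigma_N\|^\alpha\le C(\alpha,d)M^{-\beta/d}r\,\delta^\alpha$. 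Concatenating/adding traffic plans (using subadditivity of $\mass^\alpha$ and \Cref{concatenation}) yields $\tplan P_N+\tplan P'_N\in\tplanset(\mu_N+\mu'_N,\nu)$ with $\mu_N+\mu'_N$ having at most $N+M$ atoms, so
\[
    \mathcal E^\alpha(\nu,N+M)\le \mathbf d^\alpha(\mu_N,\nu'_N)+C(\alpha,d)M^{-\beta/d}r\,\delta^\alpha.
\]
The delicate point is the bookkeeping on the number of points: I want to compare $\mathcal E^\alpha(\nu,N)$, not $\mathcal E^\alpha(\nu,N+M)$, with the inner infimum at index $N$. The clean fix is to run the inner infimum at index $N-M$ instead: choosing $M=M(N)=\lceil\theta N\rceil$ for a small parameter $\theta>0$, we get
\[
    N^{\beta/d}\mathcal E^\alpha(\nu,N)\le N^{\beta/d}\inf\{\mathcal E^\alpha(\nu',N-M):\nu'\le\nu,\ \|\nu-\nu'\|\le\delta\}+C(\alpha,d)r\,\delta^\alpha\,(N/M)^{\beta/d},
\]
and $(N/M)^{\beta/d}\to\theta^{-\beta/d}$ while $(N/(N-M))^{\beta/d}\to(1-\theta)^{-\beta/d}\to1$ as $\theta\to0$. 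Taking $\liminf_{N\to\infty}$, then $\delta\to0$ (killing the $\delta^\alpha$ term), then $\theta\to0$, gives
\[
    \lim_{\delta\to0}\liminf_{N\to\infty}N^{\beta/d}\inf\{\mathcal E^\alpha(\nu',N):\nu'\le\nu,\ \|\nu-\nu'\|\le\delta\}\ge\liminf_{N\to\infty}N^{\beta/d}\mathcal E^\alpha(\nu,N),
\]
which is the desired reverse inequality. (One should also check that enlarging $N$ to $N-M$ in the inner $\inf$ is harmless: the inner infimum is monotone nonincreasing in the number of points, and replacing index $N$ by the smaller $N-M$ only makes the inner infimum larger, so this direction of the bound is the right one.)

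**Where the obstacle is.** The only real subtlety is the interplay between the two discretization parameters — the mass defect $\delta$ and the fraction $\theta$ of points diverted to irrigate the defect — and making sure the order of limits ($N\to\infty$, then $\delta\to0$, then $\theta\to0$) is legitimate; in particular one needs the elementary fact that $\liminf$ is stable under a uniform-in-$N$ error term that vanishes as $\theta\to0$, plus the behavior $(1-\theta)^{-\beta/d}\to1$. A secondary, purely technical point is ensuring the traffic plan addition $\tplan P_N+\tplan P'_N$ genuinely lies in $\tplanset(\mu_N+\mu'_N,\nu)$: this is immediate from additivity of the source and sink marginals under sums of traffic plans, and subadditivity $\mass^\alpha(\tplan P_N+\tplan P'_N)\le\mass^\alpha(\tplan P_N)+\mass^\alpha(\tplan P'_N)$ is exactly \Cref{concatenation}\labelcref{concat_multip} (or the direct concavity estimate). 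Everything else is routine; no regularity of interfaces or landscape functions is needed here, only \Cref{lemma:small_cost} and subadditivity.
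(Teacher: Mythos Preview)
Your proposal is correct and follows essentially the same approach as the paper: split off a fraction $\theta$ (the paper's $\lambda$) of the $N$ points to quantize the defect $\nu-\nu'$ via \Cref{lemma:small_cost}, use subadditivity of $\mass^\alpha$ to recombine, then send $N\to\infty$, $\delta\to 0$, and finally $\theta\to 0$. The only cosmetic difference is the index bookkeeping: the paper compares $\mathcal E^\alpha(\nu,N+\lceil\lambda N\rceil)$ with $\mathcal E^\alpha(\nu',N)$, whereas you compare $\mathcal E^\alpha(\nu,N)$ with $\mathcal E^\alpha(\nu',N-\lceil\theta N\rceil)$; both reindexings are legitimate because for $\theta\in(0,1)$ the map $N\mapsto N-\lceil\theta N\rceil$ (respectively $N\mapsto N+\lceil\lambda N\rceil$) eventually hits every large integer, so the $\liminf$ over the subsequence equals the full $\liminf$.
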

\begin{proof}
    Suppose that $\nu$ is supported on a closed cube $Q$ of edge length $r > 0$. First of all, it is clear that
    \[\inf \{ \ener E^\alpha(\nu',N) : \nu'\leq \nu, \norm{\nu-\nu'}\leq \delta\} \leq \ener E^\alpha(\nu,N).\]
    for every $\delta >0$. Now, let us take a small $\lambda > 0$. For every $N$ large enough, by \Cref{lemma:small_cost} and subadditivity of the $\alpha$-mass, we have for every $\nu' \leq \nu$:
\begin{align*}
\ener E^\alpha(\nu,N + \lceil\lambda N \rceil) &\leq \ener E^\alpha(\nu',N) + \ener E^\alpha(\nu-\nu',\lceil\lambda N \rceil)\\
&\leq \ener E^\alpha(\nu',N) + C(\alpha,d) N^{-\beta/d} r \norm{\nu-\nu'}^\alpha \lambda^{-\beta/d},
\end{align*}
hence for every $\delta > 0$,
\begin{align*}
    \qquad&\liminf_{N\to+\infty} N^{\beta/d} \inf \{ \ener E^\alpha(\nu',N) : \nu'\leq \nu, \norm{\nu-\nu'}\leq \delta\}\\
    \geq& \liminf_{N\to+\infty} N^{\beta/d}\ener E^\alpha(\nu,N+\lceil\lambda N \rceil)  - C(\alpha,d) r \delta^\alpha \lambda^{-\beta/d}\\
    =& \rev{\liminf_{N\to+\infty} \left(\frac{N}{N+\lceil \lambda N\rceil}\right)^{\beta/d} (N+\lceil \lambda N\rceil)^{\beta/d}\ener E^\alpha(\nu,N+\lceil\lambda N \rceil)  - C(\alpha,d) r \delta^\alpha \lambda^{-\beta/d}}\\
    \geq& (1+\lambda)^{-\beta/d} \liminf_{N\to+\infty} N^{\beta/d}\ener E^\alpha(\nu,N)  - C(\alpha,d) r \delta^\alpha \lambda^{-\beta/d},
\end{align*}
\rev{the last inequality resulting from $N/(N+\lceil \lambda N\rceil) \xto{N\to+\infty} (1+\lambda)^{-1}$ and the fact that the inferior limit along $(N+\lceil \lambda N\rceil)_{N\in \N^*}$ is greater, as a subsequence, than along the whole sequence $(N)_{N\in \N^*}$.} Taking the liminf as $\delta \to 0$ and then $\lambda \to 0$ yields the result.
\end{proof}

\begin{lemma}\label{lem_block_approximation}
    Let $\nu \in \measspace^+(Q \subseteq \R^d)$ be a measure over a closed cube $Q$ of edge length $R$ such that $\nu \ll \lbm^d$, $\alpha \in (1-1/d,1)$ and $(A_i)_{1\leq i\leq I}$ be a $\lbm^d$-essential partition\footnote{Meaning $\lbm^d\left(Q\Delta \bigcup_{1\leq i \leq I} A_i\right) = 0$ and $\lbm^d(A_i\cap A_j) = 0$ for every $i\neq j$.} of $Q$ with $\diam(A_i)\le r$ for $1\leq i\leq I$. We set
    \[
        \nu' \coloneqq \sum_{i=1}^I m_i \lbm^d \mres A_i
    \]
    where $m_i \coloneqq \nu(A_i)/\lbm^d(A_i)$ if $\nu(A_i) > 0$ and $m_i \coloneqq 0$ otherwise. There is a constant $C_\BOT' = C_\BOT'(\alpha,d)$ depending only on $\alpha$ and $d$ such that
    \[
        \mathbf d^\alpha(\nu,\nu') \leq C_\BOT' R^{1-\beta} r^\beta \norm{\nu'-\nu}^\alpha .
    \]
\end{lemma}
\begin{proof}
We know from \cite[Proposition~0.1]{morelComparisonDistancesMeasures2007} (also \cite[Proposition~6.16]{bernotOptimalTransportationNetworks2009}) that
\[\mathbf d^\alpha(\mu^-,\mu^+) \leq C_\BOT' W_1(\mu^-,\mu^+)^\beta\leq C_\BOT'W_\infty(\mu^-,\mu^+)^\beta \]
for every probability measures $\mu^-$ and $\mu^+$ in $\probspace(Q_1^d)$ and some constant $C_\BOT' = C_\BOT'(\alpha,d)$, \rev{where $W_p$ denotes the $p$-Wasserstein distance for $p\in [1,+\infty]$}. Applying it to $\mu^- = \nu - \nu \wedge \nu'$ and $\mu^+ = \nu'- \nu\wedge \nu'$ after appropriate rescalings in mass $m \coloneqq \norm{\mu^-}$ and distance $R$, we obtain:
\begin{align*}
    &\qquad R^{-1}m^{-\alpha} \mathbf d^\alpha(\mu^-,\mu^+) \leq C_\BOT' R^{-\beta} W_\infty(\mu^-,\mu^+)^\beta\\
    &\implies  \mathbf d^\alpha(\nu,\nu') \leq \mathbf d^\alpha(\mu^-,\mu^+) \leq C_\BOT' W_\infty(\mu^-,\mu^+)^\beta m^\alpha R^{1-\beta}.
\end{align*}
By construction, $\nu$ and $\nu'$ have equal mass on each $A_i$, thus the same goes for $\mu^-$ and $\mu^+$, and since $\diam(A_i) \leq r$ it implies that $W_\infty(\mu^-,\mu^+) \leq r$. Since $\norm{\nu-\nu'} = 2 m$, we obtain the desired result.
\end{proof}

We are now in the position to prove \Cref{thm_GCV}.

\begin{proof}[Proof of \Cref{thm_GCV}]
We are going to prove successively the $\Gamma-\liminf$ and $\Gamma-\limsup$ inequality, i.e.
\begin{gather}
    \forall \rho \in\probspace(\R^d), \forall (\rho_N)_{N\in\N^*} \narrowto \rho, \quad \liminf_N \ener{F}_N(\rho_N) \geq \ener{F}_\infty(\rho),\label{Gamma_liminf}\\
    \forall \rho\in\probspace(\R^d), \exists (\rho_N)_{N\in\N^*} \narrowto \rho, \quad \limsup_N \ener{F}_N(\rho_N) \leq \ener{F}_\infty(\rho)\label{Gamma_limsup}.
\end{gather}

\paragraph{Proof of the $\Gamma-\liminf$ inequality \labelcref{Gamma_liminf}.} Let us take a sequence of probability measures $\rho_N \narrowto \rho$, assuming without loss of generality that $\liminf_N \ener{F}_N(\rho_N) < +\infty$. Up to taking a subsequence, we may assume that $\ener F_N(\rho_N)$ converges to $\liminf_N \ener F_N(\rho_N)$ and
\[C \coloneqq \sup_{N\in\N^*} \ener F_N(\rho_N) < +\infty.\]
In particular we know that for every $N\in \N^*$, $\rho_N = \frac 1N \sum_{s\in \Sigma_N} \delta_s$ for some set $\Sigma_N$ of cardinality $N$, and we take a mass-optimal quantizer $\mu_N$ of $\nu$ with respect to $\Sigma_N$, as well as an optimal traffic plan $\tplan P_N \in \tplanset(\mu_N,\nu)$, so that
\begin{align*}
    \ener F_N(\rho_N) = N^{\beta/d} \mathbf d^\alpha(\Sigma_N,\nu) &= N^{\beta/d}\mathbf d^\alpha(\mu_N,\nu) = N^{\beta/d}\mass^\alpha(\tplan P_N) = N^{\beta/d} \int_{\curvspace^d} Z_{\tplan P_N}\dd\tplan P_N(\gamma),
\end{align*}
recalling $Z_{\tplan P_N}$ is defined in \labelcref{landscape_precursor}. A standard strategy to show \labelcref{Gamma_liminf} is to express this energy as the total mass of some measure $e_N$, which converges up to subsequence to some measure $e$, then show a lower bound on $e$ and use the lower semicontinuity of the norm on $\measspace(\R^d)$. In our branched optimal transport setting, in order to follow this strategy we will have to resort to outer measures rather than measures. More precisely, we shall bound from below the energy $\ener F_N(\rho_N)$ by the total mass $E'_N(\R^d)$ of some suitable outer measure $E'_N$, that in some sense becomes a measure asymptotically as $N\to +\infty$.

Notice that
\begin{equation*}
    C N^{-\beta/d} \geq \int_{\curvspace^d} Z_{\tplan P_N}(\gamma) \dd\tplan P_N(\gamma) \geq \norm{\nu}^{\alpha-1} \int_{\curvspace^d} L(\gamma) \dd\tplan P_N(\gamma),
\end{equation*}
so that by Markov's inequality for every $M>0$:
\begin{equation}\label{plan_restriction_small_curves_1}
    \tplan P_N(\{\gamma: L(\gamma)\geq M N^{-\beta/d}\}) \leq \frac{C\norm{\nu}^{1-\alpha}}{M}.
\end{equation}
Consider an increasing sequence $M_N$ tending to $+\infty$ and such that $M_N N^{-\beta/d} \to 0$, and set
\begin{equation}\label{plan_restriction_small_curves_2}
    \tplan P_N' \coloneqq \tplan P_N \mres \Gamma_N \quad\text{where}\quad \Gamma_N \coloneqq \{\gamma:\ L(\gamma) < M_N N^{-\beta/d}\}.
\end{equation}
We define for every Borel set $A \subseteq \R^d$:
\begin{equation}\label{ENEprimeN}
    E_N(A) \coloneqq N^{\beta/d} \mass^\alpha(\tplan P_N \mres e_\infty^{-1}(A)),\qquad E'_N(A) \coloneqq N^{\beta/d} \mass^\alpha(\tplan P'_N \mres e_\infty^{-1}(A)).
\end{equation}
We remark that
\[E_N(\R^d) = \ener{F}_N(\rho_N)\]
and $E'_N$ (and also $E_N$) is an outer measure (being countably subadditive) and a priori is not a measure: it is possible that for two disjoint Borel sets $A_1, A_2$, the plans $\tplan P'_N \mres e_\infty^{-1}(A_1)$ and $\tplan P'_N \mres e_\infty^{-1}(A_2)$ are not disjoint. However, $E_N'$ becomes additive when $\dist(A_1,A_2) > 0$ and $N$ becomes large enough. Indeed, if $M_N N^{-\beta/d} \leq \frac 12 \dist(A_1,A_2)$, which is the case for $N$ large enough, then for every curve $\gamma_i \in \Gamma_N \cap e_\infty^{-1}(A_i)$, $i\in\{1,2\}$,
\[\gamma_1(\R) \cap \gamma_2(\R) = \emptyset,\]
which in turn implies that $\tplan P'_N \mres e_\infty^{-1}(A_1)$ and $\tplan P'_N \mres e_\infty^{-1}(A_2)$ are disjoint, and thus by \labelcref{sub_additivity}
\begin{align}
   \mass^\alpha(\tplan P'_N \mres e_\infty^{-1}(A_1\cup A_2)) &= \mass^\alpha(\tplan P'_N \mres e_\infty^{-1}(A_1)) + \mass^\alpha(\tplan P'_N \mres e_\infty^{-1}(A_2))\notag\\
   \shortintertext{i.e.}
   E'_N(A_1\cup A_2) &= E'_N(A_1) + E'_N(A_2).\label{energy_addivity}
\end{align}
Notice that this additivity property does not hold a priori for $E_N$, which was the point for restricting it and using $E'_N$ instead.

We know that $\nu$-a.e. point $x\in \spt \nu$ satisfies
\begin{equation}\label{lebesgue_point}
    \fint_{Q_\delta(x)} \abs{\nu(y)-\nu(x)} \dd y \xto{\delta \to 0} 0\quad\text{and}\quad \nu(x) \in (0,+\infty),
\end{equation}
where $Q_\delta(x)$ denotes the closed cube $x + \delta [-1/2,1/2]^d$. Fix such a point $x$, take $\delta > 0$ such that $\rho(\partial Q_\delta(x)) = 0$ (this is true for all but countably many $\delta$'s), and consider the slightly smaller $\delta' = \tau \delta$ for $\tau \in (0,1)$ (which we will send to $1$ later). We denote for every $N\in\N^*$
\begin{equation*}
    n_{N,\delta} \coloneqq \#(\Sigma_N \cap Q_\delta(x)),\qquad \nu_N \coloneqq (e_\infty)_\# \tplan P'_N,
\end{equation*}
and we define the $\delta'$-rescalings around $x$
\begin{gather*}
    \nu_{\delta'} \coloneqq \frac 1{\delta'^{d}}\left(y \mapsto \frac{y-x}{\delta'}\right)_\sharp \left(1\wedge \frac{\nu}{\nu(x)} \lbm^d \mres Q_{\delta'(x)}\right),\\
    \nu_{N,\delta'} \coloneqq \frac 1{\delta'^{d}}\left(y \mapsto \frac{y-x}{\delta'}\right)_\sharp \left(1\wedge \frac{\nu_N}{\nu(x)} \lbm^d \mres Q_{\delta'(x)}\right).
\end{gather*}
For $N$ large enough $Q_{\delta'}(x) + B_{M_N N^{-\beta/d}}(0) \subseteq Q_\delta(x)$ because $M_N N^{-\beta/d}$ converges to $0$, hence we have the lower bounds
\begin{align}
    N^{-\beta/d} E'_N(Q_{\delta'}(x)) &= \mass^\alpha(\tplan P_N' \mres e_\infty^{-1}(Q_{\delta'}(x)))\notag\\
    &\geq \mathbf d^\alpha((e_0)_\# (\tplan P'_N \mres e_\infty^{-1}(Q_{\delta'}(x))), \nu_N \mres Q_{\delta'}(x))\label{lbo2}\\
    &\geq \ener{E}^\alpha( \nu_N \mres Q_{\delta'}(x),n_{N,\delta})\label{lbo3}\\
    &\geq \ener E^\alpha (\nu_N\wedge \nu(x) \lbm^d \mres Q_{\delta'}(x), n_{N,\delta})\label{lbo4}\\
    &= \nu(x)^\alpha \delta'^{1+d\alpha} \ener E^\alpha(\nu_{N,\delta'},n_{N,\delta})\label{lbo5},
\end{align}
where \labelcref{lbo2} follows from the definition of $\mathbf d^\alpha$, \labelcref{lbo3} and \labelcref{lbo4} from the facts that the source measure of $\tplan P'_N \mres e_\infty^{-1}(Q_{\delta'}(x))$ is a submeasure of $\mu_N \mres Q_\delta(x)$ (thus has at most $n_{N,\delta}$ atoms) and that $\ener{E}^\alpha(\nu,n)$ is decreasing in $n$ and increasing in $\nu$.

For every $N\in \N^*$, $\nu_{N,\delta'}$ is a submeasure of $\nu_{\delta'}$ because $\nu_N \leq \nu$, and by \labelcref{plan_restriction_small_curves_1} and \labelcref{plan_restriction_small_curves_2} we know that $\norm{\nu_{\delta'}-\nu_{N,\delta'}} \leq C\norm{\nu}^{1-\alpha}/M_N \xto{N\to +\infty} 0$, thus multiplying \labelcref{lbo5} by $N^{\beta/d}$, passing to the liminf in $N$ and using \Cref{quantization_error_monotone_continuity} yields
\begin{align}
    \liminf_{N\to+\infty} E'_N(Q_\delta(x)) &\geq \nu(x)^\alpha (\delta\tau)^{1+d\alpha} \liminf_{N\to +\infty} \left(\frac{N}{n_{N,\delta}}\right)^{\beta/d} \ener{E}^\alpha (\nu_{N,\delta'}, n_{N,\delta}) n_{N,\delta}^{\beta/d}\nonumber \\
    &= \frac{(\delta\tau)^{1+d\alpha} \nu(x)^\alpha}{\rho(Q_\delta(x))^{\beta/d}} \liminf_{n\to+\infty} n^{\beta/d}\ener E^\alpha(\nu_{\delta'},n)
\end{align}
because
\[\frac{n_{N,\delta}}{N}  = \rho_N(Q_\delta(x))\xto{N\to +\infty} \rho(Q_\delta(x)),\]
since $\rho_N \narrowto \rho$ as $N\to+\infty$ and $\rho(\partial Q_\delta(x)) = 0$. Notice that $\nu_{\delta'} \leq \lbm^d \mres Q_1$ and by \labelcref{lebesgue_point} that $\norm{\nu_{\delta'}} \xto{\delta\to 0} 1 = \lbm^d(Q_1)$ hence $\norm{\nu_{\delta'}-\lbm^d\mres Q_1} \xto{\delta \to 0} 0$. We divide by $\delta^d$, pass to the limsup as $\delta \to 0$, then take $\tau \to 1$ recalling that $\delta' = \tau \delta$, and finally use \Cref{quantization_error_monotone_continuity} again, obtaining
\begin{equation}\label{local_energy_lower_bound}
\begin{aligned}
\limsup_{\delta \to 0} \frac{\liminf_N E'_N(Q_\delta(x))}{\lbm^d(Q_\delta(x))} &\geq \nu(x)^\alpha \limsup_{\delta \to 0} \frac{\delta^\beta}{\rho(Q_\delta(x))^{\beta/d}} \liminf_{n\to+\infty} n^{\beta/d} \ener E^\alpha(\lbm^d \mres [0,1]^d,n)\\
&= \frac{\nu(x)^\alpha}{\rho_\ac(x)^{\beta/d}} c_{\alpha,d},
\end{aligned}
\end{equation}
which holds for $\lbm^d$-a.e. (thus $\nu$-a.e.) $x$ by Radon--Nikodym Theorem.

Now, we conclude by applying a covering argument. For fixed $\eps \in (0,1)$ we consider the collection $\mathcal Q_{\eps}$ of cubes $Q_\delta(x)$, $\delta \in (0,1]$, $x\in \R^d$ such that
\begin{enumerate}[(i)]
\item\label{cover2} $\frac{(\eps^{-1}\wedge\nu(x))^\alpha}{(\eps \vee \rho_\ac(x))^{\beta/d}} \geq\fint_{Q_\delta(x)} \frac{(\eps^{-1}\wedge \nu)^\alpha}{(\eps \vee\rho_\ac)^{\beta/d}} - \eps$,
\item\label{cover3}  $\frac{\liminf_N E_N'(Q_\delta(x))}{\lbm^d(Q_\delta(x))} \geq c_{\alpha,d}  \frac{\nu(x)^\alpha}{\rho_\ac(x)^{\beta/d}} - \eps$.
\end{enumerate}
For any fixed $R > 0$, the set of cubes $\mathcal Q_\epsilon$ form a fine cover of a subset of $K_R \coloneqq \{x\in \R^d : \nu(x) > 0\} \cap Q_R(0)$ of full $\lbm^d$-measure because of \labelcref{local_energy_lower_bound} and the fact that for $\lbm^d$-a.e. $x\in K_R$ we have
\begin{equation*}
\lim_{\delta \to 0} \fint_{Q_\delta(x)} \frac{(\eps^{-1} \wedge\nu)^\alpha}{(\eps \vee \rho_\ac)^{\beta/d}} = \frac{(\eps^{-1} \wedge \nu(x))^\alpha}{(\eps\vee\rho_\ac(x))^{\beta/d}}
\end{equation*}
thanks to the Lebesgue--Besicovitch differentiation theorem applied to $\frac{(\eps^{-1}\wedge \nu)^\alpha}{(\eps \vee \nu_\ac)^{\beta/d}} \in L^1(K_R)$. Then, using the Vitali--Besicovitch covering theorem, there exists a countable family of disjoint cubes $(Q_{\delta_i}(x_i))_{i< I} \subseteq \mathcal Q_\eps$, $I\in \N \cup \{+\infty\}$, that cover $K_R$ up to a $\lbm^d$-negligible set.
Using above properties \labelcref{cover2,cover3} of the collection $\mathcal Q_\epsilon$, we get that for every $J< I$,
\begin{align*}
\liminf_{N\to+\infty} E'_N(K_R) &\geq \liminf_{N\to+\infty} E'_N \Bigl(\bigcup_{i\leq J} Q_{\delta_i}(x_i)\Bigr)\\
&\stackrel{\text{{\labelcref{energy_addivity}}}}{=} \liminf_{N\to+\infty} \sum_{i\leq J} E'_N(Q_{\delta_i}(x_i))\\
&\stackrel{\text{\labelcref{cover3}}}{\geq} \sum_{i\leq J} \left(c_{\alpha,d} \frac{\delta_i^d \nu(x_i)^\alpha}{\rho_\ac(x_i)^{\beta/d}} -\eps \delta_i^d\right)\\
&\stackrel{\text{\labelcref{cover2}}}{\geq} c_{\alpha,d} \sum_{i\leq J}\int_{Q_{\delta_i}(x_i)}  \frac{(\eps^{-1}\wedge\nu)^\alpha}{(\eps \vee\rho_\ac)^{\beta/d}} - (1+c_{\alpha,d})\eps\lbm^d(K_R+Q_1).
\end{align*}
Taking $J \to I$, then $\eps \to 0$ and $R\to +\infty$, by the Monotone Convergence Theorem we get
\begin{equation*}
\liminf_{N\to+\infty} \ener{F}_N(\rho_N) \geq \lim_{R\to+\infty}\liminf_{N\to+\infty} E'_N(K_R) \geq \lim_{R\to+\infty} c_{\alpha,d} \int_{K_R} \frac{\nu(x)^\alpha}{\rho_\ac(x)^{\beta/d}} \dd x = \ener{E}_\infty(\rho).
\end{equation*}

\paragraph{Proof of the $\Gamma-\limsup$ inequality  \labelcref{Gamma_limsup}.} Let us remark that the subsets
\[\mathcal A \coloneqq \probspace_c(\R^d) \cap L^1(\R^d), \quad \text{and}\quad \mathcal A' \coloneqq \{\rho \in \mathcal A : \spt\rho \text{ is some cube $Q$ and }\essinf_Q \rho > 0\},\]
where $\probspace_c(\R^d)$ denotes the set of compactly supported probability measures over $\R^d$, are dense in the $\ener F_\infty$ energy for the narrow convergence of measures. First of all, since $\nu$ has compact support it is clear\footnote{Simply consider the family $\frac{\rho \mres Q_{1/\eps}}{\norm{\rho \mres Q_{1/\eps}}}\xnarrowto{\eps \to 0}\rho$ for any given $\rho \in \probspace(\R^d)$.} that $\probspace_c(\R^d)$ is dense in energy. Then let us approximate any $\rho \in \probspace_c(\R^d)$ by measures in $\mathcal A$. Assume that it decomposes as $\rho = \rho_\ac \lbm^d + \rho_\sing$ where $\rho_\sing \perp \lbm^d$, and $\rho_\sing \neq 0$ (otherwise there is nothing to prove). We know that there exists $\rho_{\eps,\sing}$ for $\eps \in (0,1)$ which are absolutely continuous with respect to $\lbm^d \mres \Omega_\eps$, where $\Omega_\eps$ are nondecreasing subsets of $\R^d$ such that $\lbm^d(\Omega_\eps) \leq \eps$, and such that $\rho_{\sing,\eps} \xnarrowto{\eps\to 0} \rho_\sing$, and $\norm{\rho_{\eps,\sing}} = \norm{\rho_\sing}$. We set
\[\rho_\eps \coloneqq \rho_\ac\lbm^d + \rho_{\sing,\eps}.\]
Notice that $(\rho_\eps)_\ac \geq \rho_\ac$ so that
\[\ener F_\infty(\rho) \geq \ener F_\infty(\rho_\eps) \geq c_{\alpha,d}\int_{\R^d\setminus \Omega_\eps} \frac{\nu(x)^\alpha}{\rho_\ac(x)^{\beta/d}} \dd x,\]
and by the Monotone Convergence Theorem we get $\ener F_\infty(\rho_\eps) \xto{\eps \to 0} \ener F_\infty(\rho)$. Now to approximate any $\rho \in \mathcal A$ by measures in $\mathcal A'$, set for every $\eps > 0$
\[\rho_{\eps} \coloneqq \frac{\rho \vee \eps }{\norm{\rho \vee \eps}}.\]
It is clear that $\norm{\rho_{\eps}} = 1$ and $\rho_{\eps} \xnarrowto{\eps \to 0} \rho$, and by the Monotone Convergence Theorem again we get $\ener F_\infty(\rho_\eps)\xto{\eps\to 0} \ener F_\infty(\rho)$.
As a consequence, to prove the $\Gamma-\limsup$ inequality it suffices to find a recovery sequence for any given $\rho \in \mathcal A'$. Several steps are standard and inspired from \cite{bouchitteAsymptotiqueProblemePositionnement2002}, thus some of the constructions will be quickly done.

\begin{enumerate}[Step 1]
\item \textit{(Building approximation sequences.)} Take $\rho \in \mathcal A'$, whose support is by definition a cube $Q$ and assume that $\ener F_\infty(\rho) < +\infty$ (otherwise there is nothing to prove), which implies that $\spt \nu \subseteq Q$. Consider the collection of subcubes of $Q$ of edge length $\lambda N^{-1/d}$ given by
\[\{ Q_{N,i} : i\in I\} \coloneqq \{ \lambda N^{-1/d} (k+ Q_1) \subseteq Q : k \in \Z^d\},\]
 where $\lambda \geq 1$ is taken large (and will be sent to $+\infty$ later) and define piecewise constant approximations of $\nu$:
\begin{equation*}
    \nu_N \coloneqq \sum_{i\in I} \nu_{N,i}\lbm^d\mres Q_{N,i} \quad\text{where}\quad \nu_{N,i} \coloneqq \frac{\nu(Q_{N,i})}{\lbm^d(Q_{N,i})} \quad(\forall i \in I).
\end{equation*}
Notice that $\nu_N \to \nu$ in $L^1(\R^d)$. %
Let us build suitable $N$-point approximations of $\rho$ by putting the appropriate number of points $n_{N,i}$ in each cube $Q_{N,i}$. The number $n_{N,i}$ should be approximately given by
\[N \rho(Q_{N,i})= N (\lambda N^{-1/d})^d \rho_{N,i} = \lambda^d \rho_{N,i}\quad\text{where}\quad \rho_{N,i} \coloneqq \frac{\rho(Q_{N,i})}{\lbm^d(Q_{N,i})}\quad(\forall i\in I).\]
Since
\begin{gather*}
    \sum_{i\in I} \lfloor \lambda^d \rho_{N,i}\rfloor \leq  \sum_{i\in I}\lambda^d \rho_{N,i} = N \rho\Bigl(\bigcup_{i\in I} Q_{N,i}\Bigr) = N + o_{{N\to +\infty}}(N)\\
    \shortintertext{and}
    \# I \sim_{N\to +\infty} N^d/\lambda^d \geq N
\end{gather*}
for $N$ large enough, we may choose for every $i$ an integer $n_{N,i}$ such that
\begin{equation}\label{number_points_per_cube}
    \lfloor \lambda^d \rho_{N,i}\rfloor \leq n_{N,i} \leq \lfloor \lambda^d \rho_{N,i}\rfloor+1 \quad\text{and}\quad
    \sum_{i\in I} n_{N,i} = N.
\end{equation}
Notice that if we took $\lambda$ large enough, $\lambda^d \rho_{N,i} \geq \lambda^d \kappa \geq 1$ where $\kappa \coloneqq \essinf_K \rho$, so that we may assume $n_{N,i} \in \N^*$ for every $i\in I$.

For every $i\in I$, we take $\Sigma_{N,i}$ included in the interior of ${Q}_{N,i}$ as the support of a $n_{N_,i}$-point quantizer of $\lbm^d \mres Q_{N,i}$ which is $\delta_N$-optimal, where $\delta_N \coloneqq \left(\sum_{i\in I} \nu_{N,i}^\alpha\right)^{-1} N^{-\beta/d-1}$, i.e.
\begin{equation}\label{ineq_energy_cube}
\ener E^\alpha(\lbm^d \mres Q_{N,i},n_{N,i}) \leq \mathbf d^\alpha(\Sigma_{N,i},\lbm^d \mres Q_{N,i}) \leq \ener E^\alpha(\lbm^d \mres Q_{N,i},n_{N,i}) + \delta_N,
\end{equation}
and we eventually define
\[\Sigma_N \coloneqq \bigsqcup_{i\in I} \Sigma_{N,i}\quad\text{and}\quad\rho_N \coloneqq \frac 1N \sum_{s\in \Sigma_N} \delta_s.\]
We know that $\rho_N \xnarrowto{N\to+\infty} \rho$ because by \labelcref{number_points_per_cube}
\[\sup_{i\in I} \abs{\rho_N(Q_{N,i}) - \rho({Q}_{N,i})} \leq \frac 1N \sup_{i\in I} \abs{n_{N,i}-\lambda^d \rho_{N,i}} \leq \frac 1N \xto{N\to+\infty} 0.\]
By the triangle inequality and \labelcref{ineq_energy_cube} we find the following:
\begin{align}
    \ener{F}_N(\rho_N) &= N^{\beta/d}\mathbf d^\alpha(\Sigma_N,\nu)\\
    &\leq N^{\beta/d}\mathbf d^\alpha(\Sigma_N,\nu_N) + N^{\beta/d} \mathbf d^\alpha(\nu_N,\nu)\nonumber\\
    &= N^{\beta/d} \mathbf d^\alpha\Bigl(\bigcup_{i\in I} \Sigma_{N,i},\sum_{i\in I} \nu_{N,i} \lbm^d \mres Q_{N,i}\Bigr) + N^{\beta/d} \mathbf d^\alpha(\nu_N,\nu)\nonumber\\
    &\leq N^{\beta/d}\sum_{i\in I} \Bigl(\ener E^\alpha(\nu_{N,i}\lbm^d \mres Q_{N,i},n_{N,i}) + \nu_{N,i}^\alpha \delta_N\Bigr)  + N^{\beta/d} \mathbf d^\alpha(\nu_N,\nu)\nonumber\\
    &\leq N^{\beta/d}\sum_{i\in I}  \ener E^\alpha(\nu_{N,i} \lbm^d \mres Q_{N,i},n_{N,i})\label{eq:enupperbd1}\\
    &\qquad+ N^{\beta/d} \mathbf d^\alpha(\nu_N,\nu) + \frac 1 N.\label{eq:enupperbd2}
    \end{align}

\item \textit{(Bounding \labelcref{eq:enupperbd1}.)} 
We have for every $i\in I$
\begin{equation*}
\ener E^\alpha(\nu_{N,i}\lbm^d \mres Q_{N,i}, n_{N,i}) = \nu_{N,i}^\alpha (N^{-1/d} \lambda)^{1+d\alpha} \ener{E}^\alpha(\lbm^d\mres Q_1,n_{N,i}), 
\end{equation*}
and therefore, if we set $\tilde\rho_N \coloneqq \sum_{i\in I} \rho_{N,i} \one_{Q_{N,i}}$ and $X_N = \bigcup_{i\in I} Q_{N,i}$,
\begin{align*}
    &\qquad N^{\beta/d}\sum_{i\in I} \ener E^\alpha(\nu_{N,i}\lbm^d \mres Q_{N,i}, n_{N,i})\\
    &\leq \sum_{i\in I} N^{-1} \nu_{N,i}^\alpha \lambda^{1+d\alpha}\ener{E}^\alpha(\lbm^d\mres Q_1,\lfloor\lambda^d \rho_{N,i}\rfloor)\\
    &= \sum_{i\in I} \int_{Q_{N,i}} \nu_N(x)^\alpha \ener E^\alpha(\lbm^d\mres Q_1,\lfloor\lambda^d \tilde\rho_N(x)\rfloor) \lambda^\beta \dd x\\
    &= \int_{X_N} \frac{\nu_N(x)^\alpha}{\tilde\rho_N(x)^{\beta/d}} \ener E^\alpha(\lbm^d\mres Q_1,\lfloor\lambda^d \tilde \rho_N(x)\rfloor) (\lambda^d \tilde\rho_N(x))^{\beta/d} \dd x.
\end{align*}
Now note that $\tilde \rho_N \geq \kappa > 0$ for a.e. $x \in X_N$, so that
\[\ener E^\alpha(\lbm^d\mres Q_1,\lfloor\lambda^d \tilde \rho_N(x) \rfloor) (\lambda^d \tilde\rho_N(x))^{\beta/d} \leq \sup_{n\geq \lfloor \lambda^d \kappa\rfloor} \ener E^\alpha(\lbm^d\mres Q_1,n) (n+1)^{\beta/d} \leq c_{\alpha,d}(1+\eps(\lambda)),\]
where $\eps(\lambda) \xto{\lambda \to +\infty} 0$ by \Cref{p:limitcube}. Besides, $(\tilde \rho_N)$ and $(\nu_N)$ converge in $L^1(\R^d)$ to $\rho$ and $\nu$, respectively. All these measures are concentrated on $Q$, thus $\nu_N^\alpha \leq \one_Q + \nu_N$ for every $N$, and since $\nu_N^\alpha \to \nu^\alpha$ in measure and $(\one_Q+\nu_N)$ is equi-integrable, the Vitali convergence theorem yields the convergence of $(\nu_N^\alpha)$ to $\nu^\alpha$ in $L^1(\R^d)$ as well. Therefore by reverse Fatou's Lemma, taking the superior limit as $N\to +\infty$ then the limit $\lambda \to +\infty$ yields
\begin{equation*}\label{eq:enupperbd1sol}
    \limsup_{N\to +\infty} N^{\beta/d}\sum_{i\in I} \ener E^\alpha(\nu_{N,i}\lbm^d \mres Q_{N,i}, n_{N,i}) \leq c_{\alpha,d} \int_Q \frac{\nu(x)^\alpha}{\rho(x)^{\beta/d}} \dd x.
\end{equation*}

\item \textit{(Bounding \labelcref{eq:enupperbd2} and conclusion.)} 
We apply \Cref{lem_block_approximation} to the measures $\nu$ and $\nu' = \nu_N$
\[\mathbf d^\alpha(\nu,\nu_N) \leq C'_\BOT R^{1-\beta} (\lambda N^{-1/d})^\beta \norm{\nu-\nu_N}^\alpha,\]
so that 
\[N^{\beta/d} \mathbf d^\alpha(\nu,\nu_N) \leq C'_\BOT R^{1-\beta} \lambda^\beta \norm{\nu-\nu_N}^\alpha.\]
Taking the limit $N\to+\infty$, since $\nu_N \to \nu$ in $L^1$, we get
\begin{equation}\label{eq:enupperbd2sol}
    \limsup_{N\to+\infty} N^{\beta/d}\mathbf d^\alpha(\nu,\nu_N) = 0.
\end{equation}
By \labelcref{eq:enupperbd1sol} and \labelcref{eq:enupperbd2sol} we thus have
\begin{equation*}
    \limsup_{N\to+\infty} \ener {F}_N(\rho_N) \leq c_{\alpha,d}\int_\Omega \frac{\nu(x)^\alpha}{\rho(x)^{\beta/d}} \dd x = \ener{F}_\infty(\rho),
\end{equation*}
as desired. \qedhere
\end{enumerate}
\end{proof}
\begin{remark}\label{rmk:nocutoff}
    There are alternative approaches for the $\Gamma-\liminf$ part of the proof if we assume that the measure $\nu$ is $d$-Ahlfors regular (see \labelcref{eq:ahlfors}), since we may use the Hölder regularity of the landscape function and its consequences (in particular the bound on the diameter of basins in terms of their masses) that are established in \Cref{sec:landscape}. Indeed, we may use directly the outer measures $E_N$ defined for every Borel set $A$ by
    \[
        E_N(A)\coloneqq N^{\beta/d} \mass^\alpha(\tplan P_N \mres e_\infty^{-1}(A)),
    \]
    rather than the restrictions $E'_N$, or even use the measures defined by
    \[e_N(A) \coloneqq \int_A z_{\tplan P_N} \dd\mu.\]
    The relevance of restricting the plans (and thus of passing from $E_N$ to $E'_N$) is that we can then guarantee that $E'_N$ satisfies additivity for sets at positive distance and $N$ large enough. But one may check that this property holds directly for $E_N$ thanks to \Cref{corr:basindisjoint} and \Cref{lem:volbas}. It is even easier with  $e_N$ which is by definition a measure, although in this case we need to adapt the series of inequalities \labelcref{lbo2}-\labelcref{lbo4} which give the lower bound. 
    
    Also note that similar considerations using H\"older regularity of the landscape function under Ahlfors regularity hypotheses may also apply to \Cref{equidistribution_macro_scale} to replace the outer measure $E'_N$ by $E_N$ or $e_N$ in the statement on the equi-distribution of energy at the macroscopic scale.
\end{remark}
\subsection{Asymptotics of the quantization error and support of optimal quantizers}

From the $\Gamma$-convergence established in the previous subsection, we may obtain the asymptotics of the optimal quantization error (a branched optimal transport variant of Zador's theorem) and the limit density of the centers of optimal quantizers, i.e.\ to establish \Cref{thm:zador}.

\begin{proof}[Proof of \Cref{thm:zador}]
Take for every $N \in \N^*$ a $N$-point optimal quantizer $\mu_N$ of $\nu$. Since $\nu$ is concentrated on some closed cube $Q$, it is straightforward to see that by optimality all the $\mu_N$'s must be concentrated on $Q$ as well. Thus the sequence of probability measures $(\mu_N^\diamond)$ converges narrowly, up to a subsequence, to a measure $\rho$. Since for every $N$,
\[\ener F_N(\mu_N^\diamond) = \inf \ener F_N,\]
 by \Cref{thm_GCV} the measure $\rho$ minimizes the $\Gamma$-limit $\ener{F}_\infty$ and
\[\lim_{N\to+\infty} N^{\beta/d}\ener{E}^\alpha(\nu,N) = \lim_{N\to +\infty} \ener F_N(\mu_N^\diamond) = \ener{F}_\infty(\rho) = c_{\alpha,d} \int_{\R^d} \frac{\nu(x)^\alpha}{\rho_\ac(x)^{\alpha + \frac 1d -1}} \dd x.\]
As a consequence of minimality, $\rho$ is absolutely continuous with respect to $\lbm^d$ and the Euler-Lagrange equation can be written as
\[\nu(x)^\alpha =  (M\rho(x))^{\alpha + \frac 1d}\]
for $\lbm^d$-a.e. $x\in Q$, for a constant $M$ which is given by
\[M = M_{\alpha,d}(\nu) \coloneqq \int_{\R^d} \nu(x)^{\frac{\alpha}{\alpha + \frac 1d}} \dd x.\]
In particular $\rho = M_{\alpha,d}(\nu)^{-1} \nu^{\frac{\alpha}{\alpha+\frac 1d}}$ and
\[\lim_{N\to+\infty} N^{\beta/d} \ener{E}^\alpha(\nu,N) = c_{\alpha,d} M_{\alpha,d}(\nu)^{\alpha +\frac 1d} = c_{\alpha,d}\left(\int_{\R^d} \nu(x)^{\frac \alpha{\alpha + \frac 1d}}\dd x\right)^{\alpha + \frac 1d}.\qedhere\]
\end{proof}

\subsection{Equidistribution results at the macroscopic scale}

To understand uniformizing features at the macroscopic scale, we deduce convergence of measures (or outer measures) of interest from the $\Gamma$-convergence result we have established and its proof. 
\begin{proposition}\label{equidistribution_macro_scale}
    Let $(\mu_N)_{N\in \N^*}$ be a sequence of $N$-point optimal quantizers of $\nu$. 
    \begin{enumerate}[ (A) ]
        \item\label{uniformity_empirical_measures} The empirical measures converge as follows:
        \[ 
            \mu_N^\diamond \narrowto M_{\alpha,d}(\nu)^{-1} \nu^{\frac\alpha{\alpha+\frac 1d}} \quad\text{where}\quad M_{\alpha,d}(\nu) = \int_{\R^d} \nu(x)^{\frac{\alpha}{\alpha + \frac 1d}} \dd x.
        \]
        In particular if $\nu = \lbm^d \mres X$ for some Borel set $X$ satisfying $\lbm^d(X) = 1$, we obtain
        \[
            \frac 1N \# (\spt \mu_N \cap B) \to \lbm^d(B),
        \]
        for every Borel set $B\subseteq X$ such that $\lbm^d(\partial B) = 0$.
        \item\label{uniformity_energy} The energy outer measures $(E'_N)$ defined in \labelcref{ENEprimeN} converge in the following sense:
        \[
            \lim_{N\to+\infty} E_N'(B) = c_{\alpha,d} M_{\alpha,d}(\nu)^{\alpha+\frac 1d-1} \int_{B} \nu(x)^{\frac{\alpha}{\alpha + \frac 1d}} \dd x
        \]
        for every Borel set $B$ such that $\lbm^d(\partial B) = 0$. In particular if $\nu = \lbm^d \mres X$ for some Borel set $X$ satisfying $\lbm^d(X) >0$ then
        \[
            \lim_{N\to +\infty} E_N'(B) = c_{\alpha,d} M_{\alpha,d}(\nu)^{\alpha+\frac 1d-1} \lbm^d(B).
        \]
    \end{enumerate}
\end{proposition}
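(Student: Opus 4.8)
The plan is to read \Cref{equidistribution_macro_scale} off the $\Gamma$-convergence of \Cref{thm_GCV} together with the quantitative material already assembled inside its proof, treating the two items separately. Item \labelcref{uniformity_empirical_measures} needs almost nothing new: the empirical measures $\mu_N^\diamond$ minimize $\ener{F}_N$ over the weakly compact set $\probspace(K)$, so by the fundamental theorem of $\Gamma$-convergence every weak cluster point of $(\mu_N^\diamond)$ minimizes $\ener{F}_\infty$; since $\ener{F}_\infty$ is strictly convex in the density with unique minimizer $\rho_\star \coloneqq C_{\alpha,d}(\nu)^{-1}\nu^{\alpha/(\alpha+\frac1d)}$ — as shown in the proof of \Cref{thm:zador} via the Euler--Lagrange relation $\nu^\alpha = (C_{\alpha,d}(\nu)\,\rho)^{\alpha+\frac1d}$ — the whole sequence converges to $\rho_\star$, which is exactly \Cref{thm:zador}\,(A). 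When $\nu = \lbm^d\mres K$ with $\lbm^d(K)=1$ the limit is $\lbm^d\mres K$, and because $\mu_N^\diamond(B) = N^{-1}\#(\spt\mu_N\cap B)$, the stated convergence on Borel sets $B$ with $\lbm^d(\partial B)=0$ is just the portmanteau theorem.

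For \labelcref{uniformity_energy} I would keep all the notation of the proof of \Cref{thm_GCV}: an optimal plan $\tplan P_N\in\tplanset(\mu_N,\nu)$, a cutoff scale $M_N\to\infty$ with $M_N N^{-\beta/d}\to 0$, the truncated plan $\tplan P_N'$, and the outer measure $E'_N(A) = N^{\beta/d}\mass^\alpha(\tplan P_N'\mres e_\infty^{-1}(A))$, which is concentrated on $K$ since $(e_\infty)_\sharp\tplan P_N'\leq\nu$. Let $\mathscr E \coloneqq c_{\alpha,d}\,\nu^\alpha \rho_{\star,\ac}^{-\beta/d}\,\lbm^d$, a finite measure with $\mathscr E\ll\lbm^d$; by the Euler--Lagrange relation its density is a constant multiple of $\rho_\star$, so after expanding $\rho_\star$ it is the measure on the right-hand side of \labelcref{uniformity_energy}. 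First I would establish the local lower bound
\[
  \liminf_{N\to\infty} E'_N(U) \;\geq\; \mathscr E(U) \qquad\text{for every open }U\subseteq K+Q_1,
\]
by re-running the Vitali--Besicovitch covering argument of \Cref{thm_GCV} with the admissible cubes restricted to lie inside $U$: the pointwise estimate \labelcref{local_energy_lower_bound} (with $\rho = \rho_\star$, now identified by \labelcref{uniformity_empirical_measures}) holds for $\nu$-a.e.\ $x$, the cubes contained in $U$ still form a fine cover of $\lbm^d$-a.e.\ of $\{\nu>0\}\cap U$, and the additivity \labelcref{energy_addivity} lets one sum the local contributions for $N$ large exactly as before.

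For the upper bound, the one global quantity at hand is the total mass: $E'_N(K)\leq E_N(K) = \ener{F}_N(\mu_N^\diamond) = N^{\beta/d}\ener{E}^\alpha(\nu,N) \to c_{\alpha,d}C_{\alpha,d}(\nu)^{\alpha+\frac1d} = \mathscr E(K)$ by \Cref{thm:zador}\,(B), which combined with the lower bound for $U=K+Q_1$ forces $\lim_N E'_N(K) = \mathscr E(K)$. Then I would fix a Borel set $B$ with $\lbm^d(\partial B)=0$ (hence $\mathscr E(\partial B)=0$), reduce to $B\subseteq K$ since $E'_N$ and $\mathscr E$ are concentrated on $K$, and use \labelcref{energy_addivity} once more: for any bounded open $V$ with $\overline V\subseteq\operatorname{int}(K+Q_1)\setminus\overline B$ one has $\dist(\overline B,V)>0$, so $E'_N(\overline B)+E'_N(V)=E'_N(\overline B\cup V)\leq E'_N(K+Q_1)$ for $N$ large; passing to the limit, invoking the lower bound on $V$, and letting $V$ exhaust $\operatorname{int}(K+Q_1)\setminus\overline B$ through such separated sets yields $\limsup_N E'_N(\overline B)\leq\mathscr E(K)-\mathscr E(K\setminus\overline B)=\mathscr E(\overline B)=\mathscr E(B)$. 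Together with $\liminf_N E'_N(\operatorname{int}B)\geq\mathscr E(\operatorname{int}B)=\mathscr E(B)$ from the lower bound and the monotonicity $E'_N(\operatorname{int}B)\leq E'_N(B)\leq E'_N(\overline B)$ of the outer measures, this gives $\lim_N E'_N(B)=\mathscr E(B)$, i.e.\ \labelcref{uniformity_energy}; the case $\nu=\lbm^d\mres K$ follows by the same substitution as in \labelcref{uniformity_empirical_measures}.

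The main obstacle is the one already met in the $\Gamma$-liminf part of \Cref{thm_GCV}: the $E'_N$ are merely outer measures, and \labelcref{energy_addivity} is available \emph{only} for sets at positive distance — precisely why the truncated plans $\tplan P_N'$ are used — so passing from the single global identity $\lim_N E'_N(K)=\mathscr E(K)$ to convergence on an arbitrary Borel continuity set forces one to squeeze $B$ between an inner open set and the complement of an open set separated from $\overline B$, closing the gap via $\lbm^d(\partial B)=0$ and $\mathscr E\ll\lbm^d$. The secondary point requiring care is checking that the covering lower bound of \Cref{thm_GCV} is genuinely local, so that it upgrades to $\liminf_N E'_N(U)\geq\mathscr E(U)$ for all open $U$ and not only for $U=K+Q_1$; this is a rereading of the existing argument rather than a new difficulty.
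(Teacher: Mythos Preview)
Your proposal is correct and follows essentially the same route as the paper. For \labelcref{uniformity_empirical_measures} the paper simply invokes \Cref{thm:zador}, and for \labelcref{uniformity_energy} it does exactly what you outline: localize the covering argument of the $\Gamma$-liminf to obtain $\liminf_N E'_N(\Omega)\geq\mathscr E(\Omega)$ for open $\Omega$, then sandwich a Borel set $B$ with $\lbm^d(\partial B)=0$ using the asymptotic additivity \labelcref{energy_addivity} and the global identity $\lim_N E'_N(K)=\mathscr E(K)$. The only cosmetic difference is that the paper works with the explicit $\eps$-neighborhoods $B_{<\eps}=\{d(\cdot,B^c)>\eps\}$ and $B_{>\eps}=\{d(\cdot,B)>\eps\}$ in place of your $\operatorname{int}B$ and generic separated open $V$, which amounts to the same squeeze.
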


\begin{proof}
The first item \labelcref{uniformity_empirical_measures} is a direct consequence of \Cref{thm:zador}. For \labelcref{uniformity_energy}, we follow the proof of the $\Gamma-\liminf$ inequality in \Cref{thm_GCV} and apply the covering argument to the subcollection $\mathcal Q_\eps' \subseteq \mathcal Q_\eps$ of cubes $Q_\delta(x)$ which are included in a given open subset $\Omega \subseteq \R^d$. We therefore get
\[
    \liminf_{N\to+\infty} E'_N(\Omega) \geq c_{\alpha,d} M_{\alpha,d}(\nu)^{\alpha+\frac 1d-1} \int_{\Omega} \nu(x)^{\frac{\alpha}{\alpha + \frac 1d}} \dd x.
\]
If $B$ is a Borel set with $\lbm^d(\partial B) = 0$, we may apply the above inequality to $B_{<\eps} \coloneqq \{x : d(x,B^c) > \eps\}$ and $B_{>\eps} = \{ x : d(x,B) > \eps\}$, and using the asymptotic additivity of $E'_N$ for the sets $B$ and $B_{>\eps}$ which are at positive distance, we obtain
\begin{align*}
    c_{\alpha,d} M_{\alpha,d}(\nu)^{\alpha+\frac 1d-1} \int_{B_{<\eps}} \nu(x)^{\frac{\alpha}{\alpha + \frac 1d}}\dd x &\leq \liminf_{N\to+\infty} E_N'(B_{<\eps})\leq \liminf_{N\to+\infty} E_N'(B) \leq \limsup_{N\to+\infty} E_N'(B)\\
    &= \limsup_{N\to+\infty} E'_N (B\cup B_{>\eps}) - E'_N(B_{>\eps})\\
    &\leq \limsup_{N\to+\infty} E_N(\R^d) - \liminf_{N\to+\infty} E'_N(B_{>\eps})\\
    &\leq \begin{multlined}[t]
    c_{\alpha,d} M_{\alpha,d}(\nu)^{\alpha+\frac 1d-1} \int_{\R^d} \nu(x)^{\frac{\alpha}{\alpha + \frac 1d}} \dd x\\
    - c_{\alpha,d} M_{\alpha,d}(\nu)^{\alpha+\frac 1d -1} \int_{B_{>\eps}} \nu(x)^{\frac{\alpha}{\alpha + \frac 1d}} \dd x
    \end{multlined}\\
    &= c_{\alpha,d} M_{\alpha,d}(\nu)^{\alpha+\frac 1d-1} \int_{\R^d\setminus B_{>\eps}} \nu(x)^{\frac{\alpha}{\alpha + \frac 1d}} \dd x.
\end{align*}
Taking the limit $\eps\to 0$, we get the desired result of \labelcref{uniformity_energy}.
\end{proof}

\section{Landscape function for mass-optimal quantizers}\label{sec:landscape}

This section is devoted to the landscape function, its definition and Hölder regularity. We stress that the classical definition of landscape function from \cite{santambrogioOptimalChannelNetworks2007}, recalled in \Cref{sec:bot}, is only given in the case of a single source $\mu = m\delta_x$ and, as already said, an optimal traffic plan with several sources may in general not decompose disjointly according to its sources. This poses a serious issue to define and study the landscape function in such a case. An attempt at defining the landscape function for several sources (even in a more general setting) has been made in \cite[Chapter~4]{pegonBranchedTransportFractal2017}, but the construction is quite technical and the Hölder constant computed there actually explodes when the number of sources tends to infinity. However, in the case of optimal quantizers or even mass-optimal quantizers, the disjointness result established in \Cref{quantizers_disjointness} allows us to give a simple ad hoc definition of landscape function, and, following the approach of \cite{santambrogioOptimalChannelNetworks2007}, we are able to show its Hölder regularity with a Hölder constant that is \emph{uniform in the number of sources}, a crucial information to establish the uniform regularity properties in \Cref{sec:uniform}
.

\subsection{Uniform Hölder regularity}

Our main result is the following:

\begin{thm}[extended version of \Cref{thm:holderz-intro}]\label{thm:holderz}
    Let $\alpha \in (1-1/d,1)$ and $\nu \in \measspace_c^+(\R^d)$ be a measure which is $d$-Ahlfors regular with constants $0 < c_A \leq C_A$, i.e.
    \begin{equation}\label{eq:ahlfors}
        c_A r^d \leq \nu(B_r(x)) \leq C_A r^d \quad (\forall x\in \spt \nu, \,\forall r\leq \diam(\spt \nu)),
    \end{equation}
    and let $\tplan P \in \tplanset(\mu,\nu)$ be an optimal traffic plan where $\mu = \sum_{i=1}^N m_i \delta_{x_i}$ is a $N$-point mass-optimal quantizer of $\nu$ with respect to $\{x_i\}_{1\leq i \leq N}$. There exists a unique function $z_{\tplan P} : \spt \nu \to \R_+$ that we call \emph{landscape function associated with $\tplan P$} satisfying:
    \begin{enumerate}[(i)]
    \item\label{item_landscape_well-defined} for every $i \in \{1, \ldots, N\}$, $z_{\tplan P^{x_i}} = z_{\tplan P}$ everywhere on $\bas(\tplan P,x_i)$;
    \item\label{item_landscape_holder} $z_{\tplan P}$ is $\beta$-Hölder continuous where we recall $\beta = 1+d\alpha-d \in (0,1)$, with a Hölder constant smaller than a constant $C_H = C_H(c_A,C_A,\alpha,d)$.
    \end{enumerate}
\end{thm}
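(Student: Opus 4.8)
The plan is to build $z_{\tplan P}$ by gluing the single-source landscape functions and to derive the Hölder estimate from the first-variation inequality of the $\alpha$-mass, exactly as in the single-source case, but keeping careful track of the dependence on $N$. First I would set $\tplan P^{x_i}$ to be the restriction of $\tplan P$ from the source $x_i$; by \Cref{quantizers_disjointness}\,\labelcref{disjointness_plans} each $\tplan P^{x_i}$ is an optimal single-source traffic plan between $m_i\delta_{x_i}$ and the irrigated measure $\nu_i\coloneqq(e_\infty)_\sharp\tplan P^{x_i}$, so the single-source landscape function $z_{\tplan P^{x_i}}$ of \labelcref{landscdef} is well defined and enjoys all the properties of \Cref{properties_landscape}. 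The basins $\bas(\tplan P,x_i)=\spt\nu_i$ may overlap on a $\nu$-null, hence $\lbm^d$-null, set; I would check that where two basins meet, the two single-source landscape functions agree. This can be seen from the fact that on $\spt\nu_i\cap\spt\nu_j$ the multiplicities $\Theta_{\tplan P^{x_i}}$ and $\Theta_{\tplan P^{x_j}}$ are supported on $\hdm^1$-disjoint sets, so a $\tplan P$-good curve ending at such a point lies, up to $\hdm^1$-null sets, entirely in one network; combined with the optimality and the Euler–Lagrange-type balance encoded by \Cref{properties_landscape}\,\labelcref{first_variation_distance}, moving a bit of mass from one source to the other would strictly decrease the cost unless $z_{\tplan P^{x_i}}=z_{\tplan P^{x_j}}$ there — this is the same mass-swapping argument already used in the proof of \Cref{quantizers_disjointness}\,\labelcref{disjointness_irrigated_measures}. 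Defining $z_{\tplan P}(x)\coloneqq z_{\tplan P^{x_i}}(x)$ for $x\in\bas(\tplan P,x_i)$ is then unambiguous, giving \labelcref{item_landscape_well-defined}, and uniqueness is immediate from this local coincidence requirement.

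For the Hölder estimate \labelcref{item_landscape_holder}, I would fix two points $y,y'\in\spt\nu$ and estimate $z_{\tplan P}(y')-z_{\tplan P}(y)$ from above by $C_H\,|y-y'|^\beta$ (the reverse bound follows by symmetry). Write $r\coloneqq|y-y'|$. The idea is the standard one for landscape functions: append to $\tplan P$ a cheap single-source traffic plan that irrigates a small ball $B_r(y')$ of $\nu$-mass $\le C_A r^d$ starting from a point near $y$, using $z_{\tplan P}$ as an upper first variation. Concretely, let $\tilde\nu\coloneqq \nu\mres B_\rho(y')$ for a radius $\rho\simeq r$, of mass $m=\norm{\tilde\nu}$, with $c_Ar^d\le m\le C_Ar^d$ by Ahlfors regularity. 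Using \Cref{properties_landscape}\,\labelcref{first_variation_distance} for the source $x_i$ whose basin contains $y$, together with \labelcref{formula_greater distance} which gives $z_{\tplan P}(y')\le z_{\tplan P}(y)+\mathbf d^\alpha\big(m\delta_{p},\tilde\nu\big)/(\alpha m)$-type control after renormalisation — more precisely, comparing the cost of irrigating $\nu$ versus $\nu-\tilde\nu+m\delta_y$ and vice versa — and invoking the upper estimate \labelcref{upper_estimate_alpha_mass}, namely $\mathbf d^\alpha(m\delta_y,\tilde\nu)\le C_\BOT\, r\, m^\alpha$, I get $z_{\tplan P}(y')-z_{\tplan P}(y)\lesssim r\,m^{\alpha-1}\lesssim r\,(c_Ar^d)^{\alpha-1}=c_A^{\alpha-1}r^{1+d(\alpha-1)}=c_A^{\alpha-1}r^\beta$, where crucially the constant depends only on $c_A,C_A,\alpha,d$ and not on $N$, because the single-source first-variation bound is applied to one basin at a time and the upper estimate \labelcref{upper_estimate_alpha_mass} is $N$-free. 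Lower semicontinuity and finiteness of $z_{\tplan P}$ on $\spt\nu$ come from \Cref{properties_landscape} applied basin-by-basin, and the Hölder bound just obtained upgrades this to genuine continuity.

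The main obstacle I anticipate is making the mass-perturbation argument for the Hölder bound fully rigorous across basin boundaries: when $y$ and $y'$ lie in different basins $\bas(\tplan P,x_i)$ and $\bas(\tplan P,x_j)$, one must either route the comparison through the common value of $z_{\tplan P}$ where the basins touch, or argue directly that the first-variation inequality of \Cref{properties_landscape}\,\labelcref{first_variation_mass_landscape} can be applied with a perturbation $\tilde\nu-\nu$ that redistributes mass within a single source while the other sources and their plans are frozen — this is exactly why mass-optimality (rather than mere optimality) of $\mu$ is the right hypothesis, since it licenses varying the masses $m_i$ freely. A secondary technical point is choosing the auxiliary radius $\rho$ and the base point for the appended plan so that the appended curves stay $\tplan P$-good (or can be made so after concatenation, using \Cref{concatenation}\,\labelcref{concat_multip}), ensuring the first-variation inequality applies with no loss; this is routine but must be stated carefully. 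Once these are in place, the constant $C_H$ is read off as a fixed multiple of $C_\BOT\, c_A^{\alpha-1}$ depending only on $\alpha,d,c_A,C_A$, completing the proof.
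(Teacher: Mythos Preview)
Your plan has two genuine gaps that the paper's argument is designed to avoid.

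\medskip
\textbf{The pointwise Hölder bound does not follow from a single first-variation step.} The inequality \Cref{properties_landscape}\,\labelcref{first_variation_distance} controls \emph{integrals} of $z_{\tplan P}$ against a perturbation $\tilde\nu-\nu$, not the value of $z_{\tplan P}$ at a single point. With the competitor you describe (remove $\tilde\nu=\nu\mres B_r(y')$ and re-irrigate it through a good curve to $y$) the best you can extract is an averaged statement of the type
\[
\fint_{B_r(y')} z_{\tplan P}\,\dd\nu \;\le\; z_{\tplan P}(y) + C\,r^\beta,
\]
not $z_{\tplan P}(y')\le z_{\tplan P}(y)+C r^\beta$. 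Upgrading averaged oscillation control to a genuine pointwise Hölder bound is precisely a Campanato-type argument, and that is what the paper carries out: for each ball $B_r(x)$ it builds a competitor by splitting $B_r(x)$ into the super- and sub-median sets of $z$, doubles the mass on one and removes it on the other, applies \Cref{properties_landscape}\,\labelcref{first_variation_distance} \emph{for every source $x_i$ simultaneously}, sums, and then corrects back to $\nu$ via a short concatenation. Mass-optimality of $\mu$ closes the loop and yields $\int_{B_r(x)}|z-z_r(x)|\,\dd\nu\le C r^{1+d\alpha}$, after which standard Campanato estimates give the $\beta$-Hölder representative with the $N$-free constant. Note that this median competitor also solves the cross-basin difficulty you flagged: the perturbation moves mass among \emph{all} the $\nu_i$'s at once, so nothing has to be routed through a basin boundary.

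\medskip
\textbf{The well-definedness argument on basin overlaps is circular as stated.} You want to conclude $z_{\tplan P^{x_i}}=z_{\tplan P^{x_j}}$ on $\spt\nu_i\cap\spt\nu_j$ by swapping a bit of mass between sources, but \Cref{quantizers_disjointness}\,\labelcref{disjointness_irrigated_measures} gives $\nu_i\perp\nu_j$, so this intersection is $\nu$-null and there is no mass available to swap; the argument of \Cref{quantizers_disjointness} cannot be re-run there. The paper avoids this by reversing the order: it first defines $z$ only $\nu$-a.e.\ (using $\nu_i\perp\nu_j$), proves Hölder continuity via Campanato to obtain a continuous representative $z_{\tplan P}$ on all of $\spt\nu$, and only \emph{then} shows $z_{\tplan P}=z_{\tplan P^{x_i}}$ everywhere on $\bas(\tplan P,x_i)$ by a separate competitor (re-irrigate $B_r(x)$ through a $\tplan P^{x_i}$-good curve to $x$ and let $r\to 0$). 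Lower semicontinuity of $z_{\tplan P^{x_i}}$ only gives one inequality; the other needs this extra step.
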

Before giving a proof, it may be of interest to note that the above points \labelcref{item_landscape_well-defined}, \labelcref{item_landscape_holder} have the following direct consequences:
\begin{itemize}
\item Each landscape function $z_{\tplan P^{x_i}}$ is Hölder continuous, a fact which does not follow from the Hölder continuity results of \cite{santambrogioOptimalChannelNetworks2007,brancoliniHolderRegularityLandscape2011} because we do not know that the target measures $\nu_i = (e_\infty)_\sharp \tplan P^{x_i}$ are Ahlfors-regular (even though $\nu$ is);
\item the functions $z_{\tplan P^{x_i}}$ \enquote{glue} continuously, i.e.\ there is no discontinuity at the interfaces;
\item the Hölder constant of the landscape function $z_{\tplan P}$ defined by the above gluing is bounded above by a constant \emph{which does not depend on $N$}.
\end{itemize}

\begin{proof}[Proof of \Cref{thm:holderz}]
    Let us start by setting a candidate landscape function which is uniquely defined $\nu$-almost everywhere on $\spt \nu$. The measures $\nu_i \coloneqq (e_\infty)_\sharp \tplan P^{x_i}$ are mutually singular and sum to $\nu$ thanks to \Cref{quantizers_disjointness}, thus we may define a Borel function $z : \spt \nu \to \R_+$ such that for every $i\in\{1,\ldots,N\}$:
    \[z = z_{\tplan P^{x_i}}\quad \nu_i\text{-almost everywhere}.\]
    
    Let us show that $z$ admits a Hölder continuous representative through Campanato estimates, following the strategy of \cite{santambrogioOptimalChannelNetworks2007}. Take a point $x \in \spt \nu$. For every $r\in (0,2\diam(\spt \nu)]$ we denote by $z_r(x) \coloneqq \fint_{B_r(x)} z\dd\nu$ the mean of $z$ on $B_r(x)$, and by $\bar z_r(x)$ the central median of $z$ on $B_r(x)$ with respect to $\nu$, defined as the midpoint of the interval of values $\ell \in \R_+$ such that $B_r(x)$ may be partitioned into two subsets $A \sqcup B = B_r(x)$ with equal mass, i.e.\ $\nu(A) = \nu(B) = \nu(B_r(x))/2$, and such that $z \geq \ell$ on $A$ and $z\leq \ell$ on $B$. Consider two such sets $A,B$ for the central median $\ell = \bar z_r(x)$ and define the following variation of $\nu$:
    \begin{gather*}
        \tilde\nu \coloneqq \nu - \nu \mres A + \nu \mres B = \sum_{i=1}^N \tilde\nu_i
        \shortintertext{where for every $i\in \{1,\ldots,N\}$,}
        \tilde\nu_i \coloneqq \nu_i - \nu_i \mres A + \nu_i \mres B.
    \end{gather*}
    By \Cref{quantizers_disjointness} again, we know that the $\tplan P^{x_i}$'s are disjoint and thus optimal traffic plans with single source $x_i$, thus we may use the first variation inequality \labelcref{first_variation_distance} of \Cref{properties_landscape} for every $i\in \{1,\ldots,N\}$ to obtain
    \begin{equation}\label{competitor_upper_bound_i}
    \begin{aligned}
        \mathbf d^\alpha(\norm{\tilde\nu_i} \delta_{x_i}, \tilde\nu_i) &\leq \mathbf d^\alpha(m_i\delta_{x_i}, \nu_i) + \alpha  \left(\int_B z_{\tplan P^{x_i}}\dd\nu_i - \int_A z_{\tplan P^{x_i}} \dd\nu_i\right)\\
        &= \mass^\alpha(\tplan P^{x_i}) + \alpha  \left(\int_B z\dd\nu_i - \int_A z \dd\nu_i\right).
        \end{aligned}
    \end{equation}
    We set $\tilde{\tplan P} \coloneqq \sum_{i=1}^N \tilde{\tplan P}_i$ where $\tilde{\tplan P}_i \in \tplanset(\norm{\tilde\nu_i} \delta_{x_i},\tilde\nu_i)$ is an optimal traffic plan for every $i \in \{1,\ldots, N\}$. Summing \labelcref{competitor_upper_bound_i} over $i$, using the subadditivity of the $\alpha$-mass and the disjointness of the $\tplan P^{x_i}$'s together with \labelcref{sub_additivity} yields
    \begin{align}
        \mass^\alpha(\tilde{\tplan P}) \leq \sum_{i=1}^N \mass^\alpha(\tilde{\tplan P}_i) &= \sum_{i=1}^N \mathbf d^\alpha(\norm{\tilde\nu_i} \delta_{x_i}, \tilde\nu_i)\nonumber\\
        &\leq \sum_{i=1}^N \left(\mass^\alpha(\tplan P^{x_i}) + \alpha \left(\int_B z\dd\nu_i - \int_A z \dd\nu_i\right)\right)\nonumber\\
        &=\mass^\alpha(\tplan P) + \alpha  \left(\int_B z\dd\nu - \int_A z \dd\nu\right).\label{competitor_upper_bound}
    \end{align}
    Notice that $\tilde{\tplan P} \in \tplanset(\tilde \mu,\tilde \nu)$ where $\tilde \mu \coloneqq \sum_{i=1}^N \norm{\tilde \nu_i} \delta_{x_i}$. Take an optimal traffic plan $\tplan Q \in \tplanset(\tilde \nu,\nu)$ and consider a concatenation
    \[
        \tplan P' \in \tilde{\tplan P} : \tplan Q \subseteq \tplanset(\tilde\mu,\nu),
    \]
    which is defined thanks to \Cref{concatenation} \labelcref{concat_exist}. Since $\spt(\tilde\nu-\nu) \subseteq \bar B_r(x)$ and $\norm{\tilde \nu-\nu} = \nu(B_r(x)) \leq C_A r^d$, by \labelcref{concat_alpha_mass} and the branched transport upper estimate \labelcref{upper_estimate_alpha_mass} we have
    \begin{equation}\label{competitor_lower_bound}
        \mass^\alpha(\tplan P') \leq \mass^\alpha(\tilde{\tplan P}) + \mass^\alpha(\tplan Q) \leq \mass^\alpha(\tilde{\tplan P}) + C_\BOT \ 2r\ (C_A r^d)^\alpha.
    \end{equation}
    Now we remark that $\tilde\mu$ is still supported on $\{x_i : 1\leq i\leq N\}$ and $\mu$ is a mass-optimal quantizer of $\nu$ with respect to the $x_i$'s so that $\mass^\alpha(\tplan P')$ is greater than $\mass^\alpha(\tplan P)$, thus by combining \labelcref{competitor_lower_bound} and \labelcref{competitor_upper_bound}
    \begin{align*}
        \mass^\alpha(\tplan P) \leq \mass^\alpha(\tplan P')&\leq \mass^\alpha(\tilde{\tplan P}) + 2C_\BOT C_A^\alpha r^{1+d\alpha}\\
        &\leq \mass^\alpha(\tplan P) + \alpha\left(\int_B z\dd\nu- \int_A z\dd\nu\right) + 2 C_\BOT C_A^\alpha r^{1+d\alpha}.
    \end{align*}
    This implies that
    \[
        0 \leq \alpha\left(\int_B z\dd\nu- \int_A z\dd\nu\right) + 2 C_\BOT C_A^\alpha r^{1+d\alpha},
    \]
    hence
    \begin{align}
        \int_{B_r(x)} \abs{z-\bar z_r(x)}\dd\nu &= \int_A z\dd\nu - \int_B z\dd\nu \leq 2\alpha^{-1} C_\BOT C_A^\alpha r^{1+d\alpha}\nonumber\\
        \shortintertext{and finally}
        \int_{B_r(x)} \abs{z-z_r(x)}\dd\nu &\leq \int_{B_r(x)} \abs{z-\bar z_r(x)}\dd\nu + \nu(B_r(x)) \abs{z_r(x)-\bar z_r(x)}\nonumber\\
        &\leq 2 \int_{B_r(x)} \abs{z-\bar z_r(x)}\dd\nu \leq 4\alpha^{-1} C_\BOT C_A^\alpha r^{1+d\alpha}.     \label{campanato_estimate_1}
    \end{align}
    We now use Campanato estimates: for every $x\in \spt \nu$, $r\leq 2\diam(\spt \nu)$ and $r'\in [r/2,r]$,
    \begin{equation}\label{campanato_estimate_2}
    \begin{aligned}
        \abs{z_r(x)-z_{r'}(x)}&\leq \fint_{B_{r'}(x)} \abs{z-z_r(x)}\dd\nu\\
        &\leq \frac 1{\nu(B_{r'}(x))} \int_{B_r(x)} \abs{z-z_r(x)}\dd\nu\leq \frac{4\alpha^{-1} C_\BOT C_A^\alpha r^{1+d\alpha}}{c_A (r/2)^d} \leq C r^\beta,
    \end{aligned}
    \end{equation}
    where we have set $C\coloneqq\frac{2^{d+2} C_\BOT C_A^\alpha}{\alpha c_A} $, and as before $\beta = 1 + d\alpha-d \in (0,1)$. Applying \labelcref{campanato_estimate_2} to radii $r 2^{-n}, r2^{-n-1}$ for $n\in \N$, we deduce that $(z_{r 2^{-n}})_{n\in \N}$ is a Cauchy sequence, which in turn implies (using \labelcref{campanato_estimate_2} again) that the following limit exists for every $x \in \spt \nu$:
    \[
        z_{\tplan P}(x) \coloneqq  \lim_{r\to 0} z_r(x) = \lim_{r\to 0} \fint_{B_r(x)} z \dd\nu.
    \]
    By triangle inequality \labelcref{campanato_estimate_2} yields
    \begin{gather*}
        \abs{z_r(x)-z_{\tplan P}(x)} \leq \sum_{n=0}^{+\infty} \abs{z_{r 2^{-n}}(x)-z_{r 2^{-(n+1)}}(x)} \leq \frac{C r^\beta}{1-2^{-\beta}},\\
        \shortintertext{and combining with  \labelcref{campanato_estimate_1} we get}
        \fint_{B_r(x)} \abs{z-z_{\tplan P}(x)}\dd\nu \leq \frac{2}{1-2^{-\beta}} C r^\beta.
    \end{gather*}
    Finally, take $x,y \in \spt\nu$ such that $r \coloneqq \abs{y-x}$ and use the last two inequalities to get:
    \begin{align*}
        \abs{z_{\tplan P}(y)-z_{\tplan P}(x)} &\leq \abs{z_{\tplan P}(y)-z_r(y)}+\abs{z_r(y)-z_{\tplan P}(x)}\\
        &\leq \frac{C r^\beta}{1-2^{-\beta}} + \fint_{B_r(y)}  \abs{z-z_{\tplan P}(x)}\dd \nu\\
        &\leq \frac{C r^\beta}{1-2^{-\beta}} + \frac{\nu(B_{2r}(x))}{\nu(B_r(y))} \fint_{B_{2r}(x)} \abs{z-z_{\tplan P}(x)}\dd\nu \leq \left(\frac{1+2^{d+1} (C_A/c_A)}{1-2^{-\beta}}\right)Cr^\beta.
    \end{align*}
    As a consequence, we get \labelcref{item_landscape_holder} with
    \[C_H \coloneqq \frac{2^{2(d+2)}C_\BOT C_A^{1+\alpha}}{(1-2^{-(1+d\alpha -d)})\alpha c_A^2}.\]
    
    Let us now prove \labelcref{item_landscape_well-defined}. Since $\nu$-a.e. point of $\spt\nu$ is a Lebesgue point of $z$ (with respect to $\nu$), we know that $z_{\tplan P} = z$ $\nu$-a.e. thus $z_{\tplan P} = z_{\tplan P^{x_i}}$ $\nu_i$-a.e., but since $z_{\tplan P^{x_i}}$ is lower semicontinuous and $z_{\tplan P}$ is continuous on $\spt \nu_i$, we have $z_{\tplan P^{x_i}} \leq z_{\tplan P}$ everywhere on $\bas(\tplan P, x_i) = \spt \nu_i$. Let us show that we actually have equality. Given $x \in \spt \nu_i$ such that $z_{\tplan P^{x_i}}(x) < \infty$ (otherwise there is nothing to prove), consider a $\tplan P^{x_i}$-good curve $\gamma_i$ from $x_i$ to $x$. Fix $r\leq \diam(\spt \nu)$, take an optimal traffic plan $\tplan Q \in \tplanset(\nu(B_r(x)) \delta_x, \nu\mres B_r(x))$ and by \Cref{concatenation} \labelcref{concat_exist} take a concatenation $\tplan P'\in \tplanset(\nu(B_r(x)) \delta_{x_i},\nu \mres B_r(x))$ given by
    \[
        \tplan P' \in \left(\nu(B_r(x)) \delta_{\gamma_i}\right) : \tplan Q.
    \]
    We build the competitor
    \[
        \tilde{\tplan P} \coloneqq \tplan P - \tplan P \mres \{\gamma(\infty) \in B_r(x)\} + \tplan P'
    \]
    which belongs to $\tplanset(\tilde\mu,\nu)$ for some measure $\tilde \mu$ which is still supported on $\{x_j, 1\leq j\leq N\}$. Using this time the first variation inequality \labelcref{first_variation_mass_landscape} of \Cref{properties_landscape} (applied to each $\tplan P^{x_j}$ with variation $\tilde{\tplan P}^{x_j}$, $j\in \{1,\ldots, N\}$), the subadditivity of the $\alpha$-mass, the mass-optimality of $\mu$, and the disjointness of the $\tplan P^{x_i}$'s together with \labelcref{sub_additivity}, we must have:
    \begin{align*}
        \mass^\alpha(\tplan P) \leq \mass^\alpha(\tilde{\tplan P}) &\leq \mass^\alpha(\tilde{\tplan P}^{x_i}) + \sum_{j\neq i} \mass^\alpha(\tilde{\tplan P}^{x_j})\\
        &\leq \begin{multlined}[t]
        \mass^\alpha(\tplan P^{x_i}-\tplan P^{x_i} \mres \{\gamma(\infty)\in B_r(x)\} + \nu(B_r(x))\delta_{\gamma_i}) + \mass^\alpha(\tplan Q)\\ + \sum_{j\neq i} \mass^\alpha(\tplan P^{x_j}-\tplan P^{x_j} \mres \{\gamma(\infty)\in B_r(x)\})
        \end{multlined}\\
        &\leq \sum_{j=1}^N \left(\mass^\alpha(\tplan P^{x_j}) - \alpha \int_{B_r(x)} z_{\tplan P^{x_j}} \dd\nu_j\right) + \alpha\nu(B_r(x)) z_{\tplan P^{x_i}}(x) + \mass^\alpha(\tplan Q)\\
        &= \mass^\alpha(\tplan P) -\alpha \int_{B_r(x)} z_{\tplan P} \dd\nu + \alpha \nu(B_r(x)) z_{\tplan P^{x_i}}(x) + \mass^\alpha(\tplan Q).
    \end{align*}
    Since $\mass^\alpha(\tplan Q) \leq 2 C_\BOT r^{1+d\alpha}$ it implies
    \[
        \forall r \in (0,\diam(\spt \nu)),\quad \fint_{B_r(x)} z_{\tplan P} \dd\nu \leq z_{\tplan P^{x_i}}(x) + \frac{2C_\BOT}{\alpha c_A} r^\beta \implies z_{\tplan P}(x) \leq z_{\tplan P^{x_i}}(x),
    \]
    hence $z_{\tplan P^{x_i}} = z_{\tplan P}$ on $\bas(\tplan P,x_i)$ for every $i\in \{1,\ldots, N\}$, i.e.\ \labelcref{item_landscape_well-defined} holds true. \qedhere
\end{proof}
\subsection{Applications of the landscape function}

We now generalize the properties of the single-source landscape function of \Cref{properties_landscape} to our setting.
First, we extend the notion of $\tplan P$-good curve when $\tplan P$ is a traffic plan with $N$ sources $\{x_1, \ldots, x_N\}$ such that the traffic plans $\tplan P^{x_i}$'s are disjoint: we say that a curve $\gamma$ is \emph{$\tplan P$-good} if it starts at some source $x_i$ and it is $\tplan P^{x_i}$-good.

\begin{proposition}\label{properties_landscape_extended}
    Assume $\alpha \in (1-1/d,1)$. Let $\nu$ be a compactly supported $d$-Ahlfors regular measure, $\tplan \mu$ be a $N$-point mass-optimal quantizer with respect to $\mathcal X \coloneqq \{x_i\}_{1\leq i \leq N}$ and $\tplan P \in \tplanset(\mu,\nu)$ be an optimal traffic plan with $\mass^\alpha(\tplan P) <\infty$, $\alpha \in [0,1)$. We consider a nonempty subset $\mathcal X' \subseteq \mathcal X$ and we set:
    \begin{align*}
        \mu' &\coloneqq \mu\mres \mathcal X'&
        \nu' &\coloneqq \nu \mres \bigcup_{s\in \mathcal X'} \bas(\tplan P,s)&
        \tplan P' &\coloneqq \sum_{s\in \mathcal X'} \tplan P^s.
    \end{align*}
    The landscape function $z_\tplan P : \spt \nu \to \R_+$ given by \Cref{thm:holderz} satisfies:
    \begin{enumerate}[(A)]
    \item\label{landcape_greater_distance} $z_{\tplan P}(x)\geq \norm{\nu}^{\alpha-1} d(x,\mathcal X')$ for every $x \in \spt \nu'$;
    \item\label{formula_optimal_cost_landscape_extended} the $\alpha$-distance writes as:
    \begin{equation*}
        \mathbf d^\alpha(\mathcal X', \nu') = \mathbf d^\alpha(\mu', \nu') = \mass^\alpha (\tplan P') = \int_{\R^d} z_{\tplan P}(x) \dd\nu'(x);
    \end{equation*}
    \item\label{first_variation_mass_landscape_extended} if $\tplan{\tilde P} \in \tplanset(\tilde \mu_N, \tilde \nu)$ is a traffic plan concentrated on $\tplan P'$-good curves, then
    \begin{equation*}
        \mass^\alpha(\tplan{\tilde P}) \leq \mass^\alpha(\tplan P') + \alpha \int_{\R^d} z_{\tplan P} \dd(\tilde\nu-\nu),
    \end{equation*}
    and the inequality is strict if for some $x_i \in \mathcal X'$, $\Theta_{\tplan{\tilde P}^{x_i}}-\Theta_{\tplan P^{x_i}}$ does not vanish $\hdm^1$-a.e. on $\Sigma_{\tplan P^{x_i}}$;
    \item\label{first_variation_distance_extended} in particular, $z_{\tplan P}$ is an \emph{upper first variation of the irrigation distance}, in the sense that for every $\tilde \nu \in \measspace^+(\R^d)$ which is supported on $\spt{\nu}$,
    \begin{equation*}
        \mathbf d^\alpha(\mathcal X',\tilde\nu) \leq \mathbf d^\alpha(\mathcal X',\nu') + \alpha \int_{\R^d} z_{\tplan P} \dd(\tilde\nu - \nu').
    \end{equation*}
\end{enumerate}
\end{proposition}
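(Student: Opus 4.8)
\emph{Plan.} The statement is the many-source counterpart of \Cref{properties_landscape}, and I would obtain it by summing the single-source facts of \Cref{properties_landscape} over the \emph{disjoint} source decomposition $\tplan P'=\sum_{s\in\mathcal X'}\tplan P^s$ provided by \Cref{quantizers_disjointness}. The one genuinely new ingredient is a preliminary reduction showing that $(\mu',\nu',\tplan P',\mathcal X')$ is again an admissible datum for \Cref{thm:holderz}; once that is in place I can argue as if $\mathcal X'=\mathcal X$, $\nu'=\nu$, $\mu'=\mu$ and $\tplan P'=\tplan P$.

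\emph{Reduction step.} Write $\nu_s\coloneqq(e_\infty)_\sharp\tplan P^s$, so that $\bas(\tplan P,s)=\spt\nu_s$ and $\nu'=\sum_{s\in\mathcal X'}\nu_s$, and set $\mu''\coloneqq\mu\mres(\mathcal X\setminus\mathcal X')$, $\nu''\coloneqq\sum_{s\in\mathcal X\setminus\mathcal X'}\nu_s$, $\tplan P''\coloneqq\sum_{s\in\mathcal X\setminus\mathcal X'}\tplan P^s$. By \Cref{quantizers_disjointness} the $\nu_s$ are mutually singular and $\mass^\alpha(\tplan P)=\sum_{s\in\mathcal X}\mass^\alpha(\tplan P^s)=\mass^\alpha(\tplan P')+\mass^\alpha(\tplan P'')$, while $\mathbf d^\alpha(\mathcal X,\nu)=\mathbf d^\alpha(\mu,\nu)=\mass^\alpha(\tplan P)$ because $\mu$ is mass-optimal and $\tplan P$ optimal. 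For any $\mu^\star$ supported in $\mathcal X'$ with $\norm{\mu^\star}=\norm{\nu'}$, the measure $\mu^\star+\mu''$ is supported in $\mathcal X$ with mass $\norm{\nu}$, so taking $\tplan Q\in\tplanset(\mu^\star,\nu')$ optimal and using $\tplan Q+\tplan P''\in\tplanset(\mu^\star+\mu'',\nu)$ together with subadditivity of $\mass^\alpha$ would give
\[
\mass^\alpha(\tplan P')+\mass^\alpha(\tplan P'')=\mathbf d^\alpha(\mathcal X,\nu)\le\mathbf d^\alpha(\mu^\star+\mu'',\nu)\le\mathbf d^\alpha(\mu^\star,\nu')+\mass^\alpha(\tplan P''),
\]
whence $\mass^\alpha(\tplan P')\le\mathbf d^\alpha(\mu^\star,\nu')$; taking the infimum over $\mu^\star$ and combining with $\mathbf d^\alpha(\mathcal X',\nu')\le\mathbf d^\alpha(\mu',\nu')\le\mass^\alpha(\tplan P')$ would yield the first two equalities of \labelcref{formula_optimal_cost_landscape_extended} and show that $\mu'$ is a mass-optimal quantizer of $\nu'$ with respect to $\mathcal X'$ and that $\tplan P'$ is optimal in $\tplanset(\mu',\nu')$. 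Applying \Cref{thm:holderz} to $(\mu',\nu',\tplan P',\mathcal X')$ then produces a landscape function agreeing with $z_{\tplan P^s}$ on $\bas(\tplan P,s)$ for each $s\in\mathcal X'$, hence with $z_{\tplan P}$ on $\spt\nu'=\bigcup_{s\in\mathcal X'}\bas(\tplan P,s)$.

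\emph{The four properties.} For \labelcref{landcape_greater_distance}: any $x\in\spt\nu'$ lies in some $\bas(\tplan P,s)$, so $z_{\tplan P}(x)=z_{\tplan P^s}(x)\ge d(x,s)\ge d(x,\mathcal X')$ by \Cref{thm:holderz}\labelcref{item_landscape_well-defined} together with the single-source bound $z_{\tplan P^s}(x)\ge d(x,s)$ of \Cref{properties_landscape}. For the remaining equality in \labelcref{formula_optimal_cost_landscape_extended}:
\[
\mass^\alpha(\tplan P')=\sum_{s\in\mathcal X'}\mass^\alpha(\tplan P^s)=\sum_{s\in\mathcal X'}\int z_{\tplan P^s}\dd\nu_s=\sum_{s\in\mathcal X'}\int z_{\tplan P}\dd\nu_s=\int z_{\tplan P}\dd\nu',
\]
using \Cref{quantizers_disjointness}, \Cref{properties_landscape}\labelcref{formula_optimal_cost_landscape} (each $\tplan P^s$ being optimal with single source) and \Cref{thm:holderz}\labelcref{item_landscape_well-defined}. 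For \labelcref{first_variation_mass_landscape_extended}: I decompose $\tplan{\tilde P}=\sum_{s\in\mathcal X'}\tplan{\tilde P}^s$ along its sources; since $\tplan{\tilde P}$ is concentrated on $\tplan P'$-good curves, each $\tplan{\tilde P}^s$ is concentrated on $\tplan P^s$-good curves and lies in $\tplanset(\norm{\tilde\nu_s}\delta_s,\tilde\nu_s)$ with $\tilde\nu_s\coloneqq(e_\infty)_\sharp\tplan{\tilde P}^s$; I then apply \Cref{properties_landscape}\labelcref{first_variation_mass_landscape} to each $s$ and sum, bounding the left side by subadditivity ($\mass^\alpha(\tplan{\tilde P})\le\sum_s\mass^\alpha(\tplan{\tilde P}^s)$) and simplifying the right side via $\sum_s\mass^\alpha(\tplan P^s)=\mass^\alpha(\tplan P')$ and $z_{\tplan P^s}=z_{\tplan P}$ on the relevant carriers; the strict case is inherited from the source $x_i$ at which $\Theta_{\tplan{\tilde P}^{x_i}}-\Theta_{\tplan P^{x_i}}$ is nonzero $\hdm^1$-a.e.\ on $\Sigma_{\tplan P^{x_i}}$. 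Finally \labelcref{first_variation_distance_extended} is trivial when $\int z_{\tplan P}\dd\tilde\nu=+\infty$, and otherwise $\tilde\nu$ is carried up to an $\hdm^1$-null set by the endpoints of $\tplan P'$-good curves; a measurable selection of such curves gives a plan $\tplan{\tilde P}$ concentrated on $\tplan P'$-good curves with sink $\tilde\nu$ and $\mathbf d^\alpha(\mathcal X',\tilde\nu)\le\mass^\alpha(\tplan{\tilde P})$, and \labelcref{first_variation_mass_landscape_extended} closes the estimate.

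\emph{Main obstacle.} The delicate point is the reduction step: transferring mass-optimality from $\mathcal X$ to an arbitrary sub-family $\mathcal X'$ genuinely uses the \emph{disjointness} of the source plans — namely the identities $\mass^\alpha(\tplan P)=\sum_s\mass^\alpha(\tplan P^s)$ and $\nu_i\perp\nu_j$ — rather than merely subadditivity of $\mass^\alpha$, and this is precisely where \Cref{quantizers_disjointness} is indispensable. A minor technical nuisance is the well-posedness of $\int z_{\tplan P}\dd\tilde\nu$ in \labelcref{first_variation_mass_landscape_extended}--\labelcref{first_variation_distance_extended} when $\tilde\nu$ is supported on the networks $\Sigma_{\tplan P^s}$ rather than on $\spt\nu$: one extends $z_{\tplan P}$ off $\spt\nu$ by $z_{\tplan P^s}$ along each $\Sigma_{\tplan P^s}$, which is unambiguous off an $\hdm^1$-negligible set thanks to \Cref{quantizers_disjointness}.
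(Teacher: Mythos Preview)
Your approach is essentially the same as the paper's: apply the single-source facts of \Cref{properties_landscape} to each $\tplan P^s$ and sum over $s\in\mathcal X'$, using the disjointness established in \Cref{quantizers_disjointness} and the identification $z_{\tplan P}=z_{\tplan P^s}$ on $\bas(\tplan P,s)$ from \Cref{thm:holderz}. Your argument is in fact more detailed than the paper's sketch, and the reduction step showing $\mu'$ is a mass-optimal quantizer of $\nu'$ with respect to $\mathcal X'$ (and $\tplan P'$ optimal) is a nice addition that cleanly delivers the first two equalities of \labelcref{formula_optimal_cost_landscape_extended}.

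One genuine caution: in the reduction step you write ``applying \Cref{thm:holderz} to $(\mu',\nu',\tplan P',\mathcal X')$''. This is not obviously licit, because \Cref{thm:holderz} requires the target measure to be $d$-Ahlfors regular, and the restriction $\nu'=\nu\mres\bigcup_{s\in\mathcal X'}\bas(\tplan P,s)$ need not inherit the lower Ahlfors bound (think of a ball centered near the boundary of a basin). Fortunately you do not actually need this reapplication: the identification $z_{\tplan P}=z_{\tplan P^s}$ on $\bas(\tplan P,s)$ already comes from the \emph{original} application of \Cref{thm:holderz} to $(\mu,\nu,\tplan P,\mathcal X)$, and this is all that your arguments for \labelcref{landcape_greater_distance}--\labelcref{first_variation_distance_extended} use. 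Simply drop that sentence and the proof goes through.
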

\begin{proof}[Sketch of proof]
The results follow rather directly from \Cref{properties_landscape}, \Cref{thm:holderz} and the optimality of the $\tplan P^{x_i}$'s stated in \Cref{quantizers_disjointness}. For \labelcref{landcape_greater_distance} it suffices to note that for every $x\in \spt \nu$ there exists $x_i\in \mathcal X$ such that $z_{\tplan P}(x)=z_{\tplan P^{x_i}}(x)$, and thus using \Cref{properties_landscape} \labelcref{formula_greater distance} we find $z_{\tplan P^{x_i}}(x)\geq d(x,x_i)\geq d(x,\mathcal X)$. For the remaining points, we merely apply the corresponding points from \Cref{properties_landscape} to the traffic plans $\tplan P^{x_i}, x_i\in \mathcal X'$, and combine it with the disjointness properties from \Cref{quantizers_disjointness}.
\end{proof}

\begin{corollary}\label{corr:basindisjoint}
    Under the assumptions of \Cref{thm:holderz} and using the same notations, the basins $\bas(\tplan P, x_i)$ are closed subsets of $\spt \nu$ which form a partition of $\nu$, in the sense that they are $\nu$-essentially disjoint and that their reunion is equal to $\spt \nu$.
\end{corollary}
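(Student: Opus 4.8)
The plan is to treat the three claims separately. Writing $\nu_i \coloneqq (e_\infty)_\sharp \tplan P^{x_i}$, so that $\bas(\tplan P, x_i) = \spt \nu_i$, \Cref{quantizers_disjointness} provides that the $\nu_i$ are mutually singular, that each $\tplan P^{x_i}$ is an optimal single-source plan in $\tplanset(m_i\delta_{x_i},\nu_i)$ (so that it carries a landscape function $z_{\tplan P^{x_i}}$), and that $\nu = \sum_{i=1}^N \nu_i$. The first two claims are then immediate: each $\bas(\tplan P, x_i) = \spt \nu_i$ is closed, and $\spt \nu = \spt\bigl(\sum_i \nu_i\bigr) = \overline{\bigcup_i \spt \nu_i} = \bigcup_i \bas(\tplan P, x_i)$ since a finite union of closed sets is closed. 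For the essential disjointness I would first reduce $\nu(\bas(\tplan P,x_i)\cap\bas(\tplan P,x_j)) = 0$ (for $i \ne j$) to the cleaner statement that $\nu_a(\spt\nu_b) = 0$ whenever $a \ne b$: granting the latter, $\nu(\spt\nu_i\cap\spt\nu_j) = \sum_m \nu_m(\spt\nu_i\cap\spt\nu_j) \le \nu_i(\spt\nu_j) + \sum_{m\ne i}\nu_m(\spt\nu_i) = 0$.

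The core of the argument is an exchange inequality: for $i \ne j$ and every measure $0 \ne \sigma \le \nu_j$, one has $\int z_{\tplan P^{x_i}}\dd\sigma > \int z_{\tplan P^{x_j}}\dd\sigma$. To prove it I would transfer the mass $\sigma$ from the source $x_j$ to the source $x_i$, i.e. replace $\nu_i,\nu_j$ by $\tilde\nu_i \coloneqq \nu_i + \sigma$, $\tilde\nu_j \coloneqq \nu_j - \sigma$ and keep the other $\nu_m$ unchanged; this produces a quantizer still supported on $\mathcal X$ with total sink $\nu$. For the source $x_i$ I would bound the cost via the upper first variation \Cref{properties_landscape} \labelcref{first_variation_distance}, getting $\mathbf d^\alpha(\norm{\tilde\nu_i}\delta_{x_i},\tilde\nu_i) \le \mass^\alpha(\tplan P^{x_i}) + \alpha\int z_{\tplan P^{x_i}}\dd\sigma$ (using $\mathbf d^\alpha(m_i\delta_{x_i},\nu_i) = \mass^\alpha(\tplan P^{x_i})$). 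For the source $x_j$, since $\sigma \le \nu_j$ I would take a sub-traffic plan $\tilde{\tplan P}_j \le \tplan P^{x_j}$ with $(e_\infty)_\sharp \tilde{\tplan P}_j = \tilde\nu_j$ (by disintegrating $\tplan P^{x_j}$ over $e_\infty$); it is still concentrated on $\tplan P^{x_j}$-good curves, so the first-variation computation \labelcref{first_variation_alpha_mass} applied to $\tplan P^{x_j}$ and $\tilde{\tplan P}_j$, together with $Z_{\tplan P^{x_j}}(\gamma) = z_{\tplan P^{x_j}}(\gamma(\infty))$ on good curves, gives $\mass^\alpha(\tilde{\tplan P}_j) \le \mass^\alpha(\tplan P^{x_j}) - \alpha\int z_{\tplan P^{x_j}}\dd\sigma$, and this is \emph{strict} by strict concavity of $t\mapsto t^\alpha$: the removed plan $\tplan P^{x_j} - \tilde{\tplan P}_j$ is nonzero and, $\nu$ (hence $\sigma$) being atomless by Ahlfors regularity, is not concentrated on constant curves, so its multiplicity is positive on a $\hdm^1$-positive subset of $\Sigma_{\tplan P^{x_j}}$. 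Assembling the optimal plan for $\tilde\nu_i$, the plans $\tplan P^{x_m}$ for $m \ne i,j$, and $\tilde{\tplan P}_j$ into a competitor for $\mathbf d^\alpha(\mathcal X,\nu)$, and using subadditivity of $\mass^\alpha$, the mass-optimality of $\mu$ and the identity $\mass^\alpha(\tplan P) = \sum_m \mass^\alpha(\tplan P^{x_m})$ (\Cref{quantizers_disjointness}), all the $\mass^\alpha(\tplan P^{x_m})$ cancel and what remains is exactly $0 < \alpha\int\bigl(z_{\tplan P^{x_i}} - z_{\tplan P^{x_j}}\bigr)\dd\sigma$.

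The reduction then closes at once. If $\nu_j(\spt\nu_i) > 0$ for some $i \ne j$, apply the exchange inequality to $\sigma \coloneqq \nu_j\mres\spt\nu_i \ne 0$: this $\sigma$ is concentrated on $\spt\nu_i\cap\spt\nu_j = \bas(\tplan P,x_i)\cap\bas(\tplan P,x_j)$, on which $z_{\tplan P^{x_i}} = z_{\tplan P} = z_{\tplan P^{x_j}}$ by \Cref{thm:holderz} \labelcref{item_landscape_well-defined}; hence $\int(z_{\tplan P^{x_i}} - z_{\tplan P^{x_j}})\dd\sigma = 0$, contradicting the strict inequality. So $\nu_a(\spt\nu_b) = 0$ for all $a \ne b$, and by the first paragraph the basins are $\nu$-essentially disjoint, which completes the proof.

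I expect the only genuine obstacle to be the exchange inequality, and inside it the \emph{strictness}: the non-strict first variation alone would give only $0 \le \alpha\int(z_{\tplan P^{x_i}} - z_{\tplan P^{x_j}})\dd\sigma$, which is vacuous precisely on the overlap where the integrand vanishes by \Cref{thm:holderz} \labelcref{item_landscape_well-defined}. Restoring strictness relies on strict concavity of the $\alpha$-power against the genuinely nonzero multiplicity change produced by removing $\sigma$ at $x_j$ — where atomlessness of $\nu$ (from Ahlfors regularity) rules out the degenerate transport by constant curves — and on using an \emph{actual} optimal plan for $\tilde\nu_i$ (whose existence is ensured in the super-critical regime) so that no error term is introduced. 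A minor technical point is that \labelcref{first_variation_mass_landscape} is phrased for mass-preserving variations, so for the mass-decreasing perturbation at $x_j$ one invokes the underlying computation \labelcref{first_variation_alpha_mass} directly, together with its strict form.
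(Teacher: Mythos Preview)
Your proposal is correct and follows essentially the same approach as the paper: reduce to $\nu_j(\spt\nu_i)=0$, build a competitor by transferring the mass $\sigma=\nu_j\mres\spt\nu_i$ from the source $x_j$ to $x_i$, apply the first-variation inequality with strictness coming from the genuine multiplicity change at $x_j$, and close the contradiction via \Cref{thm:holderz}\labelcref{item_landscape_well-defined} which forces $z_{\tplan P^{x_i}}=z_{\tplan P^{x_j}}$ on the overlap. The only cosmetic differences are that the paper adds the new mass to $x_i$ along $\tplan P^{x_i}$-good curves (using that $z_{\tplan P^{x_i}}$ is finite on $\bas(\tplan P,x_i)$) rather than invoking an abstract optimal plan for $\tilde\nu_i$, and cites the multi-source \Cref{properties_landscape_extended}\labelcref{first_variation_mass_landscape_extended} instead of the single-source \Cref{properties_landscape}; your packaging of the argument as a general ``exchange inequality'' is a slight abstraction of the same computation.
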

Before tackling the proof, we stress that this result is not a consequence of \Cref{quantizers_disjointness}, since the fact that the $\nu_i$ ($1\leq i \leq N$) are mutually singular does not imply that the basins $\bas(\tplan P, x_i) = \spt \nu_i$ are $\nu$-essentially disjoint. For example on $[0,1]$ it is possible to build two mutually singular positive measures $\lambda_1,\lambda_2$ such that $\lambda_1 + \lambda_2 = \lambda \coloneqq \lbm^1 \mres [0,1]$ and $\spt{\lambda_1}= \spt{\lambda_2} = [0,1]$, thus their supports are not $\lambda$-essentially disjoint.
\begin{proof}[Proof of \Cref{corr:basindisjoint}]
    The basins are closed since by definition $\bas(\tplan P,x_i) = \spt \nu_i$ where $\nu_i = (e_\infty)_\sharp \tplan P^{x_i}$ for every $i\in \{1,\ldots, N\}$. Furthermore, $\nu = \sum_{i=1}^N \nu_i$ implies that $\spt \nu = \bigcup_{i=1}^N \spt \nu_i$.

    Let us show that for every $i\neq j$, $\nu_j(\bas(\tplan P, x_i)) = 0$, which implies the result, since for $k\neq l$ we can then write
    \begin{align*}
        \nu(\bas(\tplan P, x_k) \cap \bas(\tplan P, x_l)) &= \sum_{j=1}^N \nu_j(\bas(\tplan P, x_k) \cap \bas(\tplan P, x_l))\\
        &\leq \nu_k(\bas(\tplan P,x_l)) + \sum_{j\neq k} \nu_j(\bas(\tplan P,x_k)) = 0.
    \end{align*}
    Suppose by contradiction that  $\nu_j(\bas(\tplan P, x_i)) > 0$ for some $i\neq j$. 
    Take as competitor
    \[
        \tplan P' = \tplan P - \underbrace{\tplan P^{x_j} \mres \{\gamma : \gamma(\infty) \in \bas(\tplan P,x_i)\}}_{\tplan P_{ij}} + \tplan Q,
    \]
    where $\tplan Q \in \tplanset(\nu_j(\bas(\tplan P, x_i))\delta_{x_i},\nu_j \mres \bas(\tplan P,x_i))$ is chosen so that $\tplan Q$-a.e.\ curve $\gamma$ is a $\tplan P^{x_i}$-good curve, which is possible because $z_{\tplan P^{x_i}}$ is finite everywhere on $\bas(\tplan P,x_i)$ by \Cref{thm:holderz}. Since $\tplan P' \in \tplanset(\mu',\nu)$ where $\mu'$ is concentrated on the $x_i$'s, by mass-optimality of the quantizer $\mu$, we have:
\[\mass^\alpha(\tplan P) = \mathbf d^\alpha(\mu,\nu) \leq \mathbf d^\alpha(\mu',\nu) \leq \mass^\alpha(\tplan P').\]
Notice that  $\tplan P'$ is rectifiable, $\Sigma_{\tplan P_{ij}} \subseteq \Sigma_{\tplan P^{x_j}}$ and $\Sigma_{\tplan Q} \subseteq \Sigma_{\tplan P^{x_i}}$, since every $\tplan P^{x_i}$-good curve is $\hdm^1$-a.e. included in $\Sigma_{\tplan P^{x_i}}$. Besides, the $\tplan P^{x_k}$'s are disjoint by \Cref{quantizers_disjointness}, which implies thanks to \labelcref{disjointness_rectifiable} that the networks $\Sigma_{\tplan P^{x_k}}$'s are $\hdm^1$-essentially disjoint. Thus the traffic plans $(\tplan P')^{x_k}$'s are disjoint. We apply the upper first variation inequality \Cref{properties_landscape_extended} \labelcref{first_variation_mass_landscape_extended} to the variation given by replacing $\tplan P^{x_i}\mapsto (\tplan P')^{x_i} = \tplan P^{x_i} - \tplan P_{ij}$ and $\tplan P^{x_j}\mapsto (\tplan P')^{x_j} = \tplan P^{x_j} + \tplan Q$, and we get:
    \begin{align*}
        \mass^\alpha(\tplan P) &= \mathbf d^\alpha(\mu,\nu) \leq \mathbf d^\alpha(\mu',\nu) \leq \mass^\alpha(\tplan P')\\
        &= \mass^\alpha(\tplan P) + (\mass^\alpha(\tplan P^{x_i} - \tplan P_{ij})-\mass^\alpha(\tplan P^{x_i})) + (\mass^\alpha(\tplan P^{x_j} + \tplan Q)-\mass^\alpha(\tplan P^{x_j}))\\
        &< \mass^\alpha(\tplan P) - \alpha \int_{\bas (\tplan P,x_i) \cap \bas(\tplan P,x_j)} z_{\tplan P^{x_i}}\dd\nu_j + \alpha\int_{\bas (\tplan P,x_i) \cap \bas(\tplan P,x_j)} z_{\tplan P^{x_j}} \dd \nu_j\\
        &= \mass^\alpha(\tplan P) - \alpha \int_{\bas (\tplan P,x_i) \cap \bas(\tplan P,x_j)} z_{\tplan P} \dd\nu_j + \alpha \int_{\bas (\tplan P,x_i) \cap \bas(\tplan P,x_j)} z_{\tplan P} \dd \nu_j = \mass^\alpha(\tplan P),
    \end{align*}
    where we used \Cref{thm:holderz} in the last equality, and \Cref{properties_landscape_extended} and noticed that the inequality is strict since $\Theta_{\tplan P_{ij}}$ does not vanish $\hdm^1$-a.e. on $\Sigma_{\tplan P^{x_i}}$ (or similarly that $\Theta_{\tplan Q}$ does not vanish $\hdm^1$-a.e. on $\Sigma_{\tplan P^{x_j}}$). This is a contradiction.
\end{proof}
The measure of basins can be controlled from above and below for optimal plans associated with mass-optimal quantizers. \todopap[inline]{Ce serait mieux de pouvoir dire que $z_{\tplan P}$ est Hölder le long des branches, pour pouvoir dire directement que $\mu(\bas(\tplan P,x_i))^{\alpha-1} \abs{x-x_i}\leq z(x)-z(x_i) \leq C_H \abs{x-x_i}^\beta$, donc $\mu(\bas(\tplan P,x_i)) \geq C_H^{\alpha -1} \abs{x-x_i}^d$.}

\begin{lemma}\label{lem:volbas}
Under the assumptions of \Cref{thm:holderz} and with the same notations, for every source $x_i$ of $\tplan P$, we set
\[\delta(\tplan P,x_i) \coloneqq \max_{y\in \bas(\tplan P,x_i)}\abs{y-x_i}.\]
Then we have for every $x_i$:
\begin{gather}
 c_\bas \diam(\bas(\tplan P,x_i))^d \leq \nu(\bas(\tplan P,x_i))\leq C_A \diam(\bas(\tplan P,x_i))^d,\label{eq:volbas}\\
 \frac 12 \diam(\bas(\tplan P,x_i)) \leq \delta(\tplan P,x_i) \leq \frac{1}{2}\left(\frac{C_A}{c_\bas}\right)^{\frac 1d} \diam(\bas(\tplan P,x_i)),\label{eq:delta_basin}\\
  c_H \diam(\bas(\tplan P,x_i))^\beta \leq \sup_{\bas(\tplan P,x_i)}  z_{\tplan P} \leq C_H' \diam(\bas(\tplan P,x_i))^\beta\label{eq:oscillation_landscape},
\end{gather}
where $c_{\bas} \coloneqq 2^{-d}\left(C_H + \frac{2C_\BOT}{\alpha c_A^{1-\alpha}}\right)^{\frac 1{\alpha-1}}$, $c_H \coloneqq 2^{-\beta} C_A^{\alpha -1}$ and $C_H' \coloneqq 2 c_\bas^{-\frac 1d} C_A^{\frac \beta d}$.
\end{lemma}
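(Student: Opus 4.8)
Throughout write $B_i \coloneqq \bas(\tplan P,x_i)$, $D_i \coloneqq \diam(B_i)$, $\nu_i \coloneqq (e_\infty)_\sharp\tplan P^{x_i}$ and $m_i \coloneqq \norm{\nu_i}$. By \Cref{corr:basindisjoint} one has $\nu_i = \nu\mres B_i$ and $B_i = \spt\nu_i$, hence $m_i = \nu(B_i)$; by \Cref{quantizers_disjointness} the single-source plan $\tplan P^{x_i}$ is optimal, hence simple, so its multiplicity is bounded by the transported mass, $\Theta_{\tplan P^{x_i}} \leq m_i$ everywhere; and by \Cref{thm:holderz}\labelcref{item_landscape_well-defined} the landscape $z_{\tplan P}$ restricted to $B_i$ coincides with $z_{\tplan P^{x_i}}$. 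The right-hand inequality of \labelcref{eq:volbas} is then immediate from Ahlfors regularity \labelcref{eq:ahlfors}: $B_i$ sits in the closed ball of radius $D_i$ about any of its points, which lies in $\spt\nu$, so $m_i \leq C_A D_i^d$.

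The core of the argument is a two-sided control of $z_{\tplan P}$ over $B_i$. On the lower side, for $\nu_i$-a.e. $y\in B_i$ there is a $\tplan P^{x_i}$-good curve $\gamma_y$ from $x_i$ to $y$, and since $\alpha<1$ and $\Theta_{\tplan P^{x_i}}\le m_i$,
\[
    z_{\tplan P}(y) \;=\; \int_{\gamma_y}\Theta_{\tplan P^{x_i}}^{\alpha-1} \;\geq\; m_i^{\alpha-1}\,\len(\gamma_y) \;\geq\; m_i^{\alpha-1}\,\abs{y-x_i}.
\]
On the upper side, the oscillation of $z_{\tplan P}$ over $B_i$ is $\leq C_H D_i^\beta$ by \Cref{thm:holderz}\labelcref{item_landscape_holder}, so it suffices to bound $\inf_{B_i}z_{\tplan P}$; by \Cref{properties_landscape}\labelcref{formula_optimal_cost_landscape} and the branched-transport upper estimate \labelcref{upper_estimate_alpha_mass},
\[
    m_i\inf_{B_i}z_{\tplan P} \;\leq\; \int z_{\tplan P^{x_i}}\dd\nu_i \;=\; \mathbf d^\alpha(m_i\delta_{x_i},\nu_i) \;\leq\; 2^\alpha C_\BOT\,\diam(B_i\cup\{x_i\})\,m_i^\alpha .
\]

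From here I would assemble the three claims. Applying the lower display at a point $y^\ast$ with $\abs{y^\ast-x_i}=2\delta(\tplan P,x_i)$ and using $m_i\le C_A D_i^d$ together with the elementary bound $\delta(\tplan P,x_i)\ge\tfrac12 D_i$ (which is what $B_i\subseteq\bar B_{2\delta(\tplan P,x_i)}(x_i)$ forces) yields the left inequality of \labelcref{eq:oscillation_landscape}. Confronting the two displays at $y^\ast$ gives $m_i^{\alpha-1}\,2\delta(\tplan P,x_i)\le C_H D_i^\beta + 2^\alpha C_\BOT\,\diam(B_i\cup\{x_i\})\,m_i^{\alpha-1}$; once $\diam(B_i\cup\{x_i\})=O(D_i)$, this rearranges to $m_i^{\alpha-1}\lesssim D_i^{\beta-1}=(D_i^d)^{\alpha-1}$, and since $t\mapsto t^{\alpha-1}$ is decreasing this is exactly $m_i\ge c_\bas D_i^d$, the remaining inequality of \labelcref{eq:volbas}, with the announced constant. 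Feeding $m_i\ge c_\bas D_i^d$ back into the upper display upgrades it to $\sup_{B_i}z_{\tplan P}\le C_H'D_i^\beta$, completing \labelcref{eq:oscillation_landscape}; and \labelcref{eq:delta_basin} then follows, the lower bound being the elementary inequality already used and the upper bound coming from $2\delta(\tplan P,x_i)\le z_{\tplan P}(y^\ast)\,m_i^{1-\alpha}\le C_H'D_i^\beta(C_A D_i^d)^{1-\alpha}=C_H'C_A^{1-\alpha}D_i$, after checking the resulting constant does not exceed $(C_A/2c_\bas)^{1/d}$.

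The main obstacle is the circularity just flagged: \labelcref{upper_estimate_alpha_mass} only controls $\mathbf d^\alpha(m_i\delta_{x_i},\nu_i)$ through $\diam(\spt(\nu_i-m_i\delta_{x_i}))=\max(D_i,2\delta(\tplan P,x_i))$, so the estimates on $m_i$, on $\sup_{B_i}z_{\tplan P}$ and on $\delta(\tplan P,x_i)$ are entangled and cannot be closed one at a time. I would break the loop by first proving, on its own, that the source is not far from its basin, i.e. $\diam(B_i\cup\{x_i\})\le C(c_A,C_A,\alpha,d)\,D_i$: this is obtained by pitting against the mass-optimality of $\mu$ a competitor that reroutes, through a short auxiliary network attached to $x_i$ (exactly the type of competitor used in the proof of \Cref{thm:holderz}), the portion of $\tplan P^{x_i}$ reaching the extremal point $y^\ast$ — a modification that costs $O(D_i^{1+d\alpha})$ and hence beats $\tplan P$ unless $\abs{y^\ast-x_i}\lesssim D_i$, using also that $x_i$ is the most economical among the $x_j$ for irrigating $\nu_i$. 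With this separate estimate in hand the three bounds decouple and the constants can be tracked explicitly; alternatively one carries all three quantities through a single bootstrap, exploiting that $z_{\tplan P^{x_i}}$ is nondecreasing along the good curves issuing from $x_i$.
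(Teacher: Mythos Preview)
Your lower–side ingredient (the inequality $z_{\tplan P}(y)\ge m_i^{\alpha-1}\abs{y-x_i}$ along good curves) is exactly what the paper uses.  The difference, and the place where your argument does not close, is the upper bound on $z_{\tplan P}$ at the extremal point $y^\ast$.  You try to control $\inf_{B_i}z_{\tplan P}$ through the integral formula $\int z_{\tplan P^{x_i}}\dd\nu_i=\mathbf d^\alpha(m_i\delta_{x_i},\nu_i)$ and the crude estimate \labelcref{upper_estimate_alpha_mass}; as you correctly note, this brings in $\diam(B_i\cup\{x_i\})$, and your proposed cure — first proving $\diam(B_i\cup\{x_i\})\lesssim D_i$ by a separate competitor argument — is not available at this stage.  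Indeed, for a \emph{mass-optimal} quantizer the positions $x_j$ are prescribed, so there is no mechanism forcing $x_i$ to lie near its basin: a competitor ``rerouting through a short auxiliary network attached to $x_i$'' still has to cover the distance $\abs{y^\ast-x_i}$ and therefore cannot cost $O(D_i^{1+d\alpha})$ when $x_i$ is far from $B_i$.  The bound $\abs{y^\ast-x_i}\lesssim D_i$ is in fact a \emph{conclusion} of the lemma (it is the right half of \labelcref{eq:delta_basin}), not something one can establish beforehand.

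The paper dissolves the circularity in one stroke: it applies the competitor argument not to $B_i$ but to the ball $B_r(y^\ast)$ with $r\coloneqq\abs{y^\ast-x_i}$, so that $x_i$ sits on $\partial B_r(y^\ast)$ by construction and the replacement plan $\tplan Q\in\tplanset\bigl(\nu(B_r(y^\ast))\delta_{x_i},\,\nu\mres B_r(y^\ast)\bigr)$ has $\mass^\alpha(\tplan Q)\le 2C_\BOT\,r\,\nu(B_r(y^\ast))^\alpha$, a bound in terms of $r$ alone.  Mass-optimality then yields $\alpha\!\int_{B_r(y^\ast)}z_{\tplan P}\dd\nu\le 2C_\BOT r\,\nu(B_r(y^\ast))^\alpha$, hence (via lower Ahlfors regularity and the H\"older bound of \Cref{thm:holderz}) $z_{\tplan P}(y^\ast)\le C' r^\beta$.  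Confronted with your lower display $z_{\tplan P}(y^\ast)\ge m_i^{\alpha-1}r$, the factors of $r$ arrange themselves to give $m_i\ge c_\bas(2r)^d$; combined with $m_i\le C_AD_i^d$ this \emph{produces} the bound $r\lesssim D_i$ a posteriori, and all three estimates follow.  In short: center the ball at $y^\ast$, not at $x_i$, and let $r$ be the very distance you are trying to control — then it cancels rather than obstructs.
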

\begin{proof}
First of all, since $\bas(\tplan P,x_i)$ is compact, we may consider a point $y \in \bas(\tplan P,x_i)$ such that $\abs{y-x_i} =\max_{y\in \bas(\tplan P,x_i)}\abs{y-x_i}$, and set $r\coloneqq \abs{y-x_i}$.

The upper bound in \labelcref{eq:volbas} follows from the upper $d$-Ahlfors regularity of $\mu$.

We now consider the lower bounds in \labelcref{eq:volbas} and \labelcref{eq:delta_basin}. Take a $\tplan P^{x_i}$-good curve $\gamma_i$ from $x_i$ to $y$. We build a competing traffic plan $\tilde{\tplan P}$ by removing $\tplan P \mres \{\gamma \in \curvspace^d : \gamma(\infty) \in B_{r}(y)\}$ then adding an optimal traffic plan $\tplan Q \in \tplanset(\nu(B_{r}(x_i)) \delta_{x_i}, \nu \mres B_{r}(y))$. 
By optimality of $\tplan P$ and mass-optimality of the source measure $\mu$, using the first variation inequality \Cref{properties_landscape_extended} \labelcref{first_variation_mass_landscape_extended}, and the branched transport upper estimate \labelcref{upper_estimate_alpha_mass}, we get
\begin{equation*}
    -\alpha \int_{B_r(y)} z_{\tplan P} \dd\nu  + C_\BOT (2r) \nu(B_r(y))^\alpha \geq 0 \implies \fint_{B_r(y)} z_{\tplan P} \dd\nu \leq \frac{2C_\BOT }{\alpha c_A^{1-\alpha}} r^\beta,
\end{equation*}
which yields by \Cref{thm:holderz}
\begin{equation}\label{volbas_lower_1}
    z_{\tplan P}(y)= z_{\tplan P}(y) - \fint_{B_r(y)} z_{\tplan P}\dd\nu + \fint_{B_r(y)} z_{\tplan P}\dd\nu \leq C_H r^\beta + \frac{2C_\BOT }{\alpha c_A^{1-\alpha}} r^\beta \eqqcolon C' r^\beta.
\end{equation}
Now recall the definition of landscape function in the single-source case:
\begin{equation}\label{volbas_lower_2}
\begin{aligned}
    C' r^\beta \geq z_{\tplan P}(y) = z_{\tplan P^{x_i}}(y) &= \int_{\gamma_i} \Theta_{\tplan P^{x_i}}(x)^{\alpha -1} \dd x\\
    &\geq \hdm^1(\gamma_i(\R_+)) \nu(\bas(\tplan P,x_i))^{\alpha-1} \geq r\nu(\bas(\tplan P,x_i))^{\alpha-1}.
    \end{aligned}
\end{equation}
As a consequence
\begin{equation}\label{volbas_lower_3}
    \nu(\bas(\tplan P,x_i))\geq (C' r^{\beta-1})^{\frac 1{\alpha-1}} = 2^{-d} {C'}^{\frac 1{\alpha-1}} (2r)^d = c_{\bas} (2r)^d
\end{equation}
where $c_{\bas} \coloneqq 2^{-d}\left(C_H + \frac{2C_\BOT}{\alpha c_A^{1-\alpha}}\right)^{\frac 1{\alpha-1}}$. Notice that $\diam(\bas(\tplan P,x_i)) \leq 2r$ by the triangle inequality, which is exactly the lower bound of \labelcref{eq:delta_basin}, and together with \labelcref{volbas_lower_3} yields the lower bound of \labelcref{eq:volbas}. The upper bound of \labelcref{eq:delta_basin} follows from \labelcref{volbas_lower_3} and the upper bound of \labelcref{eq:volbas}. As for \labelcref{eq:oscillation_landscape}, the lower bound comes from \labelcref{volbas_lower_2} and the upper Ahlfors regularity, while the upper bound comes from \labelcref{volbas_lower_1}, the upper bound of \labelcref{eq:delta_basin} and the triangle inequality, since for every $y'\in \bas(\tplan P,x_i)$
\begin{align*}
    z_{\tplan P}(y') \leq \abs{z_{\tplan P}(y')-z_{\tplan P}(y)} + z_{\tplan P}(y) &\leq C_H\abs{y-y'}^\beta + C' r^\beta\\
    &\leq 2 C' r^\beta  \\
    &= 2 2^{d(\alpha-1)} c_\bas^{\alpha-1} r^\beta\\
    &\leq 2^{\beta} c_\bas^{\alpha-1} \left(\frac{C_A}{2c_\bas}\right)^{\frac \beta d} \diam(\bas(\tplan P,x_i))^\beta\\
    &\leq C_H'\diam(\bas(\tplan P,x_i))^\beta,
\end{align*}
where $C_H' \coloneqq 2 c_\bas^{-\frac 1d} C_A^{\frac \beta d}$.
\end{proof}

\begin{remark}[Voronoi basins]\label{voronoi_basins}
In the case $\alpha =1$, if $\nu$ has a compact convex support $\Omega$ and $\tplan P$ is an optimal traffic plan between $\nu$ and a mass-optimal quantizer $\mu = \sum_{i=1}^N m_i \delta_{x_i}$ associated with the points $\{x_i\}_{1\leq i \leq N}$, then the basins will be exactly the \emph{Voronoi cells} $(\Omega \cap V_i)_{1\leq i \leq N}$ given by
\begin{equation}\label{voronoi_cells}
\forall i\in \{1,\ldots, N\}, \quad V_i \coloneqq \left\{ x : \abs{x-x_i} = \min_{1\leq j \leq N} \abs{x-x_j}\right\}.
\end{equation}
When $\alpha \in (0,1)$, the basins $(\bas(\tplan P,x_i))_{1\leq i \leq n}$ thus extend the notion of Voronoi cells to the case of branched optimal transport, which we may call \emph{(branched) Voronoi basins}. These Voronoi basins are also closed sets which form a partition of the given measure $\nu$, as stated in \Cref{corr:basindisjoint}, but they are much more complicated in several regards:
\begin{itemize}
    \item They need not be convex polyhedra, but are rather thought to exhibit fractal pairwise boundaries. The same conjecture was made concerning the boundaries of the so-called \emph{irrigation balls} of branched transport, i.e.\ the shapes of volume $1$ which are closest to a single point in the sense of branched transport, introduced in \cite{pegonFractalShapeOptimization2019}. In this work, each irrigation ball $\Omega$ is described as a sublevel set of the landscape function and its boundary $\partial \Omega$ as its highest level set, which allowed to show that the dimension of $\partial \Omega$ is at most $d-\beta$, which is the conjectured dimension. This conjecture is supported by \cite{devillanovaRemarksFractalStructure2019}, where it is shown that generic level sets of the landscape function must have (Minkowski) dimension $d-\beta$, if such a dimension exists. However, our Voronoi basins are \emph{not} generalizations of irrigation balls, since for us the target measure $\nu$ is fixed, thus the outer boundary (i.e.\ the boundary of the union of all basins) is prescribed to be the boundary of $\spt \nu$. The generalization of irrigation balls would be irrigation clusters arising from the problem of finding the set of volume $1$ which is closest to $N\geq 2$ arbitrary points. In our case, the interface between adjacent Voronoi basins $\bas(\tplan P,x_i)$ and $\bas(\tplan P,x_j)$ will not be described as a level set of the landscape function, but rather as $\bas(\tplan P,x_i) \cap \bas(\tplan P,x_j) = \{z_{\tplan P^{x_i}} = z_{\tplan P^{x_j}}\}$. However, it seems difficult to exploit this equality to get regularity results, since $z_{\tplan P^{x_i}}$ and $z_{\tplan P^{x_j}}$ both make sense only on the interface $\bas(\tplan P,x_i) \cap \bas(\tplan P,x_j)$ and not on any neighborhood of it.
    \item Classical Voronoi cells do not actually depend on the measure $\nu$ or its support but may be computed directly from the points $\{x_i\}_{1\leq i \leq N}$ through \labelcref{voronoi_cells}, taking the intersection with the support afterwards. On the contrary, there is a priori no reason for Voronoi basins to behave in the same way, and it is well possible that the Voronoi basins for $\nu$ and $\nu' \geq \nu$ are not nested. 
    \item Computing Voronoi basins is much more difficult, as the problem of optimizing the masses given the points does not admit an explicit solution in the form of \labelcref{voronoi_cells}.
\end{itemize}
\end{remark}

\section{Uniform properties of optimal quantizers and partitions}\label{sec:uniform}

In this section we investigate the uniform properties of optimal quantizers at the microscopic scale, i.e.\ at the scale of $N^{-1/d}$, when the measure $\nu$ that is quantized is $d$-Ahlfors regular. Roughly speaking, we are going to show that the atoms of a $N$-point optimal quantizer are distributed somewhat uniformly at this scale, being well-separated and leaving no big hole in the support of $\nu$, and that the basins are somewhat round, having inner and outer balls of comparable size. We also show uniformity bounds on the masses and energies associated with each atom.

\subsection{Delone constants for optimal quantizers}

In this section we prove \Cref{thm:delone}. Given a set $X \subseteq \R^d$ and $\mathcal X \subseteq \R^d$ a finite set of points, we define the \emph{covering radius} (also called \emph{mesh norm} or \emph{fill radius}) of $X$ by $\mathcal X$, as
\[\omega(X,\mathcal X) \coloneqq \sup_{x\in X} \min_{x' \in \mathcal X} d(x,x').\]
It is the smallest $r \geq 0$ such that the closed balls of radius $r$ with centers in $\mathcal X$ cover $X$. The \emph{separation distance} (corresponding to $1/2$ of the \emph{packing radius}) of $\mathcal X$ is defined by
\[\delta(\mathcal X) \coloneqq \min_{x,x'\in \mathcal X : x\neq x'} d(x,x').\]
A set $\mathcal X$ with finite covering radius and nonzero separation distance is called a \emph{Delone set} with respect to $X$, and $(\omega,\delta)$ its \emph{Delone constants}. Given a $d$-Ahlfors regular measure $\nu$, the following theorem shows that the atoms of optimal $N$-point quantizers are Delone sets with respect to $\spt \nu$, providing bounds comparable to $N^{-1/d}$ on its Delone constants.

Our proof of \Cref{thm:delone} is inspired from ideas of \cite{gruberOptimumQuantizationIts2004}, dealing with classical optimal transport costs. We stress that the situation in the branched optimal transport case is much more involved, since the ground cost is not explicit (it depends on all the trajectories and is part of the optimization defining $\mathbf d^\alpha$), and the shapes of basins are not known at all (they are thought to have fractal boundaries). Thus, we shall need to estimate
\begin{itemize}
    \item the cost for merging a \enquote{small} basin to a \enquote{neighbouring} basin ;
    \item the gain to remove part of a \enquote{large} basin.
\end{itemize}
The landscape function, its uniform Hölder regularity and its consequences established in \Cref{sec:landscape} will play a crucial role.

\begin{proof}[Proof of \Cref{thm:delone}]
Let $\tplan P_N \in \tplanset(\mu,\nu_N)$ be an optimal traffic plan. We proceed by proving successively the following.
\begin{enumerate}[(a)]
    \item\label{one_basin_small} \emph{At least one basin is not too large:} there is a constant $\tilde c_1$ (not depending on $N$) such that
    \begin{equation*}
        \forall N\in \N^\ast, \exists j\leq N, \quad \diam(\bas(\tplan P_N,x_j)) \leq \tilde c_1 N^{-\frac 1d}.
    \end{equation*}
    \item\label{one_basin_large} \emph{At least one basin is not too small:} there is a constant $\tilde c_2$ (not depending on $N$) such that
    \begin{equation*}
        \forall N\in \N^\ast, \exists j\leq N, \quad \diam(\bas(\tplan P_N,x_j)) \geq \tilde c_2 N^{-\frac 1d}.
    \end{equation*}
    \item\label{all_basins_small} \emph{All basins are small:} there is a constant $c_1$ (not depending on $N$) such that
    \begin{equation*}
        \forall N\in \N^\ast,\forall i \leq N, \quad \diam(\bas(\tplan P_N,x_i)) \leq c_1 N^{-\frac 1d}.
    \end{equation*}
    \item\label{all_basins_far} \emph{All atoms are far from each other:} there exists a constant $c_2$ such that
    \[\forall N\in \N^\ast, \forall (1\leq j\neq k\leq N), \quad d(x_j,x_k) \geq c_2 N^{-\frac 1d}.\]
\end{enumerate}
Notice that \labelcref{covering} follows from \labelcref{all_basins_small}, since the basins form a covering of $\spt \nu$ by \Cref{corr:basindisjoint}, while \labelcref{separation} is merely a rephrasing of \labelcref{all_basins_far}.

\paragraph{Proof of \labelcref{one_basin_small} and \labelcref{one_basin_large}.}
First note that by \Cref{corr:basindisjoint}, the basins form a partition of $\nu$, thus
\begin{equation*}\label{basins_total_volume}
\sum_{j=1}^N \nu(\bas(\tplan P_N,x_j)) = \norm{\nu},
\end{equation*}
thus there exists an index $j \in \{1,\ldots, N\}$ for which
\[
    \nu(\bas(\tplan P_N,x_j)) \leq \frac{\norm{\nu}}{N} \leq \frac{C_A}N \diam(\spt \nu)^d,
\]
and by \Cref{lem:volbas}, this implies that
\[\diam(\bas(\tplan P_N,x_j)) \leq (\norm{\nu} (c_\bas N)^{-1})^{\frac 1d}  = \tilde c_1 N^{-\frac 1d} \quad\text{where}\quad \tilde c_1 \coloneqq (C_A/c_\bas)^{\frac 1d} \diam(\spt\nu).\]
Similarly, there exists an index $i \in \{1,\ldots, N\}$ such that
\[
    C_A\diam(\bas(\tplan P_N,x_i))^d \geq \nu(\bas(\tplan P_N,x_i)) \geq \frac{\norm{\nu}}N \geq \frac{c_A}{N} \diam(\spt \nu)^d,
\]
which implies that
\[\diam(\bas(\tplan P_N,x_i)) \geq \tilde c_2 N^{-\frac 1d} \quad\text{where}\quad \tilde c_2 \coloneqq ( c_A/C_A)^{\frac 1d} \diam(\spt \nu).\]

\paragraph{Proof of \labelcref{all_basins_small}.} Applying \labelcref{one_basin_small} we take $j \leq N$ such that
\begin{equation}\label{basin_i_small}
    \diam(\bas(\tplan P_N,x_j)) \leq \tilde c_1 N^{-\frac 1d}.
\end{equation}
Suppose that for some $t>1$ there exists $i \leq N$ such that
\begin{equation*}
    \diam(\bas(\tplan P_N,x_i)) \geq t \tilde c_1 N^{-\frac 1d}.
\end{equation*}
We are going to show a contradiction when $t$ is too large (not depending on $N$). For this, let us build a better competitor than $\mu_N$. We shall add an extra point $q$ of $\bas(\tplan P_N,x_i)$ to irrigate a \enquote{costly} ball around $q$, then remove the point $x_j$ (in order to leave the number of points equal to $N$) and irrigate the former basin $\bas(\tplan P_N,x_j)$ from a neighbouring basin $\bas(\tplan P_N,x_k)$.

Let us find a \enquote{costly} ball. Consider
\[
    q \in \argmax_{\bas(\tplan P_N,x_i)} z_{\tplan P_N},
\]
which exists because $z_{\tplan P_N}$ is Hölder-continuous thanks to \Cref{thm:holderz} and basins are compact sets thanks to \Cref{corr:basindisjoint}. We consider the ball $B_{\eps t \tilde c_1 N^{-\frac 1d}}(q)$ for some small $\eps \in (0,1)$ to be fixed later.

Now, we want to remove the point $x_j$ from the quantizer $\mu_N$ and to irrigate the basin $\bas(\tplan P_N,x_j)$ from another basin that is not too far, in order to control the extra cost. By \labelcref{basin_i_small} and Ahlfors-regularity of $\nu$, for $s > 1$ we have
\[
    \nu(B_{s\tilde c_1 N^{-\frac 1d}}(x_j) \setminus \bas(\tplan P_N,x_j)) \geq c_A (s\tilde c_1)^d N^{-1} - C_A \tilde c_1^d N^{-1}= (c_A s^d-C_A) \tilde c_1^d N^{-1}.
\]
This is strictly positive if we take for example $s \coloneqq (2 C_A/c_A)^{\frac 1d}$, in which case there exists a point $p$ such that
\begin{equation}\label{exists_close_point}
    p\in\bas(\tplan P_N,x_k) \cap \left(B_{s \tilde c_1 N^{-1/d}}(x_j) \setminus \bas(\tplan P_N,x_j)\right) \quad\text{for some}\quad k\neq j,
\end{equation}
because the basins form of covering of $\spt \nu.$

We are now ready to build our competitor $\tplan P_N^*$, modifying $\tplan P_N$ according to the following sketch; the addition of curves (which increases the $\alpha$-mass) are labeled by (A), while the removal of curves (which decreases the $\alpha$-mass) are labeled by (R).
\begin{enumerate}
    \item[(R$_1$)] Remove all curves starting at $x_j$. 
    \item[(R$_2$)] Remove all curves ending in $B_{\eps t \tilde c_1 N^{-1/d}}(q)$.
    \item[(A$_1$)] Re-irrigate $\bas(\tplan P_N,x_j)$ by
    \begin{itemize}
    \item bringing a mass $m_j$ from $x_k$ to $p$ following a $\tplan P_N^{x_k}$-good curve $\gamma$,
    \item then concatenating a traffic plan $\tplan Q^1 \in \tplanset(m_j \delta_p,\nu \mres \bas(\tplan P_N,x_j))$ which is optimal.
    \end{itemize}
    \item[(A$_2$)] Add an optimal traffic plan $\tplan Q^2$ from $m \delta_q$ to $\nu \mres (B_{\eps t \tilde c_1 N^{-\frac 1d}}(q) \setminus \bas(\tplan P_N,x_j))$, where $m$ is the mass of the latter and $\eps > 0$ is a small number to be chosen later (independently from $N$ and $t$).
\end{enumerate}
We shall first estimate the energy gain and loss associated with modifications of the existing network, by applying the first variation formulas of \Cref{properties_landscape_extended} starting from a mass-optimal quantizer, then estimate the energy loss associated with the concatenation of new networks, by subadditivity of the $\alpha$-mass along concatenations, as stated in \labelcref{concat_alpha_mass}. Since A$_1$ involves the two types of modifications, its treatment will be split in two.

We start by doing the modifications along the existing network, corresponding to (R$_1$), (R$_2$) and the first part of (A$_1$). Setting $\Gamma_{x_j} \coloneqq \{\gamma : \gamma(0) = x_j\}$ and $\Gamma_q \coloneqq \{\gamma : \gamma(+\infty) \in B_{\eps t \tilde c_1 N^{-\frac 1d}}(q)\}$, we define
\[\tplan P_N'\coloneqq \tplan P_N - \tplan P_N \mres \Gamma_q - \tplan P_N \mres (\Gamma_{x_j} \setminus \Gamma_q) + m_j \delta_\gamma.\]
Secondly, we add the new curves and pieces of curves corresponding to the second part of (A$_1$) and (A$_2$). We set $\nu' \coloneqq (e_\infty)_\sharp \tplan P_N'$, and $\tilde{\tplan Q}^1 \coloneqq \tplan Q^1 + \iota_\sharp (\nu'-m_j \delta_p)$ where $\iota : \R^d \to \curvspace^d$ denotes the canonical injection which sends a point $x$ to the constant curve $\gamma_x \equiv x$. We define our competitor $\tplan P_N^*$ by
\[\tplan P_N^* \coloneqq \tplan P_N'' + \tplan Q^2 \quad\text{where}\quad \tplan P_N'' \in \tplan P_N' : \tilde{\tplan Q}^1.\]
We estimate the gain and cost of these operations. First of all, using \labelcref{upper_estimate_alpha_mass} we have
\begin{equation}\label{uniformity_covering_upper_bound_0}
\begin{aligned}
    \mass^\alpha(\tplan P_N^*) &\leq \mass^\alpha(\tplan P_N'') +\mass^\alpha(\tplan Q^2) \leq \mass^\alpha(\tplan P_N') +\mass^\alpha(\tplan Q^2)+\mass^\alpha(\tplan Q^1)\\
    &\leq \mass^\alpha(\tplan P_N') + C_\BOT C_A^\alpha (\eps t \tilde c_1 N^{-1/d})^{1+d\alpha} + C_\BOT C_A^\alpha (\tilde c_1 (1+s)N^{-1/d})^{1+d\alpha}\\
    &= \mass^\alpha(\tplan P_N') + C_1 N^{-\left(\alpha +\frac 1d\right)}(1 + (\eps t)^{1+d\alpha}),
\end{aligned}
\end{equation}
for some constant $C_1$ which does not depend on $N$.
The $\alpha$-mass of $\tplan P_N'$ may then be estimated through the first variation formula \Cref{properties_landscape_extended} \labelcref{first_variation_mass_landscape_extended}:
\begin{equation}\label{uniformity_covering_upper_bound_1}
\begin{aligned}
    \qquad\ & \mass^\alpha(\tplan P_N')\\
    \leq\  & \mass^\alpha(\tplan P_N) - \alpha\int_{B_{\eps t \tilde c_1 N^{-1/d}}(q)} z_{\tplan P_N} \dd \nu - \alpha\int_{\R^d} z_{\tplan P_N} \dd (e_\infty)_\sharp (\tplan P_N \mres (\Gamma_{x_j}\setminus \Gamma_q)) + m_j z_{\tplan P_N}(p)\\
    \leq\ & \mass^\alpha(\tplan P_N) - \alpha\int_{B_{\eps t \tilde c_1 N^{-1/d}}(q)} z_{\tplan P_N} \dd \nu  + m_j z_{\tplan P_N}(p).
\end{aligned}
\end{equation}
Let us estimate $z_{\tplan P_N}(p)$ from above and $z_{\tplan P_N}$ from below on $B_{\eps t \tilde c_1 N^{-1/d}}(q)$. By \Cref{lem:volbas}, we know that
\[
z_{\tplan P_N}(q) \geq c_H \diam(\bas(\tplan P_N,x_i))^\beta \geq c_H (t\tilde c_1)^\beta N^{-\beta/d},
\]
thus for every $y\in B_{\eps t \tilde c_1 N^{-1/d}}(q)$
\begin{equation}\label{uniformity_covering_lower_bound_z}
\begin{aligned}
    z_{\tplan P_N}(y) &\geq c_H(t\tilde c_1)^\beta N^{-\beta/d} -C_H \abs{y-q}^\beta \\
    &\geq (c_H-\eps^\beta C_H) (t\tilde c_1)^\beta N^{-\beta/d} \geq (c_H/2) (t\tilde c_1)^\beta N^{-\beta/d},
\end{aligned}
\end{equation}
provided we have chosen $\eps^\beta \leq (c_H/2C_H)$. Besides, by \Cref{lem:volbas} again and \labelcref{exists_close_point},
\begin{equation}\label{uniformity_covering_upper_bound_z}\begin{aligned}
    z_{\tplan P_N}(p) &\leq  \sup_{y\in\bas(\tplan P_N,x_j)} \abs{z_{\tplan P_N}(y) - z_{\tplan P_N}(p)} + \sup_{y\in\bas(\tplan P_N,x_j)} z_{\tplan P_N}(y)\\
    &\leq C_H((s+1)\tilde c_1 N^{-1/d})^\beta + C_H' (\tilde c_1 N^{-1/d})^\beta\\
    &= C N^{-\beta/d}
    \end{aligned}
\end{equation}
    for some $C$ which does not depend on $N$. Reporting  \labelcref{uniformity_covering_lower_bound_z} and \labelcref{uniformity_covering_upper_bound_z} in \labelcref{uniformity_covering_upper_bound_1} yields
    \begin{align*}
        \mass^\alpha(\tplan P_N') &\leq \mass^\alpha(\tplan P_N) - \alpha(c_H/2) (t\tilde c_1)^\beta N^{-\beta/d} c_A (\eps t \tilde c_1 N^{-1/d})^d + \alpha C N^{-\beta/d} C_A(\tilde c_1 N^{-1/d})^d\\
        &\leq \mass^\alpha(\tplan P_N) - N^{-\left(\alpha + \frac 1d\right)}(C_2 \eps^d t^{1+d\alpha} -C_3),
    \end{align*}
    for some constant $C_2,C_3 > 0$ which do not depend on $N$.

Injecting this into \labelcref{uniformity_covering_upper_bound_0}, we get
\begin{align*}
    \mass^\alpha(\tplan P_N^*) \leq \mass^\alpha(\tplan P_N) +  N^{-\left(\alpha +\frac 1d\right)}(C_1+C_3 + C_1(\eps t)^{1+d\alpha}-C_2 \eps^d t^{1+d\alpha}).
\end{align*}
Notice that because $1+d\alpha - d = \beta > 0$, it is possible to choose $\eps$ small enough, independently from $N$ and $t$ (e.g.\footnote{Recall that we had the condition $\eps^\beta \leq (c_H/2C_H)$ for \labelcref{uniformity_covering_lower_bound_z}.} $\eps^\beta = (C_2/2C_1) \wedge (c_H/2C_H)$) so that
\[\mass^\alpha(\tplan P_N^*) \leq \mass^\alpha(\tplan P_N) +  N^{-\left(\alpha +\frac 1d\right)}(C_1+C_3 - (C_2/2) \eps^d t^{1+d\alpha}).\]
Now, if $t$ is too large, depending on the constants $C_1,C_2,C_3,\eps$ which we stress do not depend on $N$, it leads to $\mass^\alpha(\tplan P_N^*) < \mass^\alpha(\tplan P_N)$, which contradicts the optimality of $\mu_N$, because by construction the target measure of $\tplan P_N^*$ is $\nu$, and its source measure is an $N$-point quantizer. As a conclusion, \labelcref{all_basins_small} holds true for $c_1=t_1\tilde c_1$ where $t_1 = (2(C_1+C_3)/\varepsilon^d C_2)^{1/(1+d\alpha)}$.

\paragraph{Proof of \labelcref{all_basins_far}.} Take $t > 0$ and suppose that there are two atoms $x_j,x_k$ of $\mu_N$, with $j\neq k$, such that $d(x_j,x_k) \leq t N^{-1/d}$. We are going to show a lower bound on $t> 0$ (not depending on $N$). Applying \labelcref{one_basin_large}, choose an index $i \leq N$ such that
\[\diam(\bas(\tplan P_N,x_i)) \geq \tilde c_2 N^{-1/d}.\]
Up to interchanging $j$ with $k$, we may assume that $i\neq j$ ($k$ may be equal to $i$, but it will not matter). The strategy is very similar as what we did above: we shall add an extra point $q \in \bas(\tplan P_N,x_i)$ to irrigate a \enquote{costly} ball around $q$, while removing the point $x_j$ from the quantizer and irrigating the basin $\bas(\tplan P_N,x_j)$ from the close point $x_k$.

Consider a point
\[
    q \in \argmax_{\bas(\tplan P_N,x_i)} z_{\tplan P_N},
\]
which exists because $z_{\tplan P_N}$ is Hölder-continuous thanks to \Cref{thm:holderz} and basins are compact sets thanks to \Cref{corr:basindisjoint}. Take some small $\eps \in (0,1)$ that we shall fix later. We build a better competitor thanks to the following construction.
\begin{enumerate}
\item[(R$_1$)] Remove all curves ending in $B_{\eps \tilde c_2N^{-1/d}}(q)$, of total mass $m$.
\item[(A$_1$)] Add an optimal traffic plan $\tplan Q^1 \in \tplanset(m\delta_q, \nu \mres B_{\eps \tilde c_2 N^{-1/d}}(q))$ to irrigate $\nu \mres B_{\eps \tilde c_2N^{-1/d}}(q)$ again.
\item[(A$_2$)] Irrigate $\bas(\tplan P_N,x_j)$ from the point $x_k$ instead of $x_j$ by concatenating a (unit-speed parameterization of) the segment $[x_k,x_j]$ to all curves starting at $x_j$.
\end{enumerate}
The removal (R$_1$) produces the new traffic plan
\[\tplan P'_N \coloneqq \tplan P_N - \tplan P_N \mres \Gamma_q\quad\text{where}\quad\Gamma_q \coloneqq\{\gamma : \gamma(\infty) \in B_{\eps \tilde c_2N^{-1/d}}(q)\}.\]
We show by \Cref{lem:volbas}, as in \labelcref{uniformity_covering_lower_bound_z} above, that for every $y\in B_{\eps \tilde c_2 N^{-1/d}}(q)$
    \begin{equation*}\label{uniformity_separation_lower_bound_z}
        z_{\tplan P_N}(y) \geq (c_H/2) (\tilde c_2)^\beta N^{-\beta/d},
    \end{equation*}
    provided we have chosen $\eps^\beta \leq (c_H/2C_H)$. Thus the cost gain can be estimated by using the first variation formula \Cref{properties_landscape_extended} \labelcref{first_variation_mass_landscape_extended}:
\begin{equation}\label{uniformity_separation_upper_bound_0}
\begin{aligned}
    \mass^\alpha(\tplan P_N') &\leq \mass^\alpha(\tplan P_N) - \alpha \int_{B_{\eps \tilde c_2N^{-1/d}}(q)} z_{\tplan P_N} \dd\nu\\
    &\leq \mass^\alpha(\tplan P_N) - \alpha C_A \tilde c_2^\beta (\eps \tilde c_2N^{-1/d})^d (c_H/2) N^{-\beta/d} \\
    &\leq \mass^\alpha(\tplan P_N) - C_4 \eps^d N^{-\left(\alpha + \frac 1d\right)},
\end{aligned}
\end{equation}
for some constant $C_4 >0$ which does not depend on $N$. For the addition of the (pieces of) curves (A$_1$) and (A$_2$), we denote by $\gamma_{k,j}$ the unit-speed parameterized segment from $x_k$ to $x_j$, $\tplan P_{N,j}' \coloneqq \tplan P_N' \mres\{\gamma : \gamma(0) = x_j\}$, $m_j' \coloneqq \norm{\tplan P_{N,j}'}$ and we set $\tplan Q^2 \coloneqq m'_j \delta_{\gamma_{k,j}} + (\iota \circ e_0)_\sharp(\tplan P_N'-\tplan P_{N,j}')$. We define our competitor $\tplan P_N^*$ by
\[\tplan P_N^* \coloneqq \tplan P_N'' + \tplan Q^1 \quad\text{where}\quad \tplan P_N'' \in \tplan Q^2 : \tplan P_N'.\]
From \labelcref{all_basins_small} we know that
\[m_j' \leq m_j \leq C_A(\diam (\bas(\tplan P_N,x_j)))^d \leq C_A c_1^d N^{-1},\]
and we compute, using \labelcref{uniformity_separation_upper_bound_0}:
\begin{equation*}\label{uniformity_separation_upper_bound_1}
\begin{aligned}
    \mass^\alpha(\tplan P_N^*) &\leq \mass^\alpha(\tplan P_N'') + \mass^\alpha(\tplan Q^1) \leq \mass^\alpha(\tplan P_N') + \mass^\alpha(\tplan Q^1) + \mass^\alpha(\tplan Q^2) \\
    &\leq \mass^\alpha(\tplan P_N') + C_\BOT C_A^\alpha (\eps \tilde c_2 N^{-1/d})^{1+d\alpha} + (m_j')^\alpha (tN^{-1/d})\\
    &\leq \mass^\alpha(\tplan P_N) + N^{-\left(\alpha +\frac 1d\right)}(-C_4 \eps^d + C_5 \eps^{1+d\alpha} + C_6 t),
\end{aligned}
\end{equation*}
for some $C_5,C_6 > 0$ not depending on $N$. Taking $\eps > 0$ such that $\eps^\beta \leq c_H/2C_H$ and $\eps^\beta \leq C_4/2C_5$ (e.g. take $\eps^\beta$ to be the minimum of the two), we get $C_5 \eps^{1+d\alpha} \leq (C_4/2)\eps^d$ and thus
\[\mass^\alpha(\tplan P_N^*)\leq \mass^\alpha(\tplan P_N) + N^{-\left(\alpha +\frac 1d\right)}(C_6t- (C_4/2)\eps^d),\]
which leads to a contradiction if $t$ is too small (independently from $N$). Hence $t$ is lower bounded by some constant $c_2 =C_4/(2 C_6)\varepsilon^d> 0$ and \labelcref{all_basins_far} holds true with this value of $c_2$.

Finally, we explain why the constants $c_1,c_2$ are of the form $\diam(\spt\nu)$ multiplied by a constant depending only on $(\alpha,d,c_A,C_A)$. For this part only, we abbreviate $\Delta\coloneqq\diam(\spt(\nu))$ and we write $a\approx b$ if there exists a constant $c > 0$ which depends only on $\alpha,d,c_A,C_A$ but not on $\Delta$ such that $a = c b$. First, note that $\tilde c_1 \approx \Delta$ and $\tilde c_2 \approx \Delta$. Using this, we inspect the proofs of \labelcref{all_basins_small} and \labelcref{all_basins_far} to deduce the dependence of $c_1, c_2$ on $\Delta$. For \labelcref{all_basins_small}, we find that following the proof directly, one can choose $C \approx \Delta^\beta, C_1 \approx \Delta^{1+d\alpha}, C_2 \approx \Delta^{1+d\alpha}$ and $C_3 \approx\Delta^{1+d\alpha}$. Then $\varepsilon\approx 1$ and $t_1 \approx (C_1+C_3)/C_2 \approx 1$ thus $c_1=t_1\tilde c_1\approx\Delta$, as desired. For the case of $c_2$, consider the proof of \labelcref{all_basins_far}. Here we can choose $C_4\approx\Delta^{1+\alpha d}, C_5\approx\Delta^{1+\alpha d}$ and $C_6\approx\Delta^{\alpha d}$. As a consequence $\varepsilon\approx 1$ and $c_2\approx C_4/C_6\approx\Delta$.
\end{proof}

\subsection{Inner and outer ball property of the basins}

\begin{proposition}\label{prp:quasi-uniformity_basins}
    Assume $\alpha \in(1-1/d,1)$. Let $\nu \in \measspace_c^+(\R^d)$ be a $d$-Ahlfors regular measure with constants $0 < c_A\leq C_A$. Let $\mu_N = \sum_{i=1}^N m_i \delta_{x_i}$ be a $N$-point optimal quantizer of $\nu$ and $\tplan P_N$ an optimal traffic plan in $\tplanset(\mu_N,\nu)$. There are constants $c,C >0$ depending only on $(\alpha,d,c_A,C_A,\diam(\spt \nu))$ such that for all $i \in \{1,\ldots, N\}$,
    \begin{equation}\label{outer_ball_property}
        \bas(\tplan P_N,x_i) \subseteq B_{C N^{-1/d}}(x_i),
    \end{equation}
    and
    \begin{equation}\label{inner_ball_property}
        B_{c N^{-1/d}}(x_i) \subseteq \R^d \setminus \bigcup_{j\neq i} \bas(\tplan P_N,x_j).
    \end{equation}
\end{proposition}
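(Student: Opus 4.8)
The plan is to treat the two inclusions separately. The outer ball property \labelcref{outer_ball_property} is immediate from results already established: step (c) of the proof of \Cref{thm:delone} gives $\diam(\bas(\tplan P_N,x_i))\le c_2N^{-1/d}$ for every $i$, and \Cref{lem:volbas}, \labelcref{eq:delta_basin}, gives $\max_{y\in\bas(\tplan P_N,x_i)}|y-x_i| = 2\delta(\tplan P_N,x_i)\le 2(C_A/2c_\bas)^{1/d}\diam(\bas(\tplan P_N,x_i))$; combining the two yields $\bas(\tplan P_N,x_i)\subseteq B(x_i,CN^{-1/d})$ with $C:=2(C_A/2c_\bas)^{1/d}c_2$, and nothing more is needed.

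For the inner ball \labelcref{inner_ball_property}, by \Cref{corr:basindisjoint} the statement is equivalent to $\dist(x_i,\bas(\tplan P_N,x_j))\ge cN^{-1/d}$ for all $j\ne i$; the distance is attained (basins are compact) at some $y\in\bas(\tplan P_N,x_j)$, and we may assume $m_j:=\nu(\bas(\tplan P_N,x_j))>0$. I would argue by contradiction in the spirit of the proofs of \Cref{thm:delone}: supposing $r:=|y-x_i|=\dist(x_i,\bas(\tplan P_N,x_j))\le cN^{-1/d}$ for a small constant $c$, build a competitor with at most $N$ atoms and strictly smaller $\alpha$-mass. The first step is a definite lower bound on the landscape near $y$. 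By the separation bound of \Cref{thm:delone}, $|w-x_j|\ge d(x_i,x_j)-|w-x_i|$, and, exactly as in the proof of \Cref{lem:volbas} (a $\tplan P_N^{x_j}$-good curve from $x_j$ to $w$, $\sup\Theta_{\tplan P_N^{x_j}}\le\Theta_{\tplan P_N^{x_j}}(x_j)=m_j\le C_A(c_2N^{-1/d})^d$, and $1-\alpha-\tfrac1d=-\tfrac\beta d$),
\[
 z_{\tplan P_N}(w)=\int\Theta_{\tplan P_N^{x_j}}^{\alpha-1}\ \ge\ |w-x_j|\,m_j^{\alpha-1}\ \ge\ \bigl(d(x_i,x_j)-|w-x_i|\bigr)(C_Ac_2^d)^{\alpha-1}N^{1-\alpha},
\]
so that $z_{\tplan P_N}\ge\ell_0N^{-\beta/d}$ on $B_\rho(y)\cap\bas(\tplan P_N,x_k)$ for every $k\ne i$ as soon as $r+\rho\le(c_1/2)N^{-1/d}$, with $\ell_0>0$ depending only on $\alpha,d,c_1,C_A,c_2$ (and more crudely $z_{\tplan P_N}\ge d(\cdot,x_k)$ on $\bas(\tplan P_N,x_k)$ by \Cref{properties_landscape_extended} \labelcref{landcape_greater_distance}).

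The competitor I would use removes from $\tplan P_N$ all curves ending in $A:=B_\rho(y)\setminus\bas(\tplan P_N,x_i)$ and re-irrigates $\nu\mres A$ by a plan issued from $x_i$ (the source measure stays supported on $\{x_1,\dots,x_N\}$), with $\rho=\kappa N^{-1/d}$, $\kappa$ small and $\ge c$. By the first variation inequality \Cref{properties_landscape_extended} \labelcref{first_variation_mass_landscape_extended} — using the $\hdm^1$-essential disjointness of the $\Sigma_{\tplan P_N^{x_k}}$ from \Cref{quantizers_disjointness} to identify $\int Z_{\tplan P_N}$ over the removed curves with $\int_Az_{\tplan P_N}\,\dd\nu$ — the removal lowers the $\alpha$-mass by at least $\alpha\int_Az_{\tplan P_N}\,\dd\nu\ge\alpha\ell_0N^{-\beta/d}\nu(A)$, while by \labelcref{upper_estimate_alpha_mass} the re-irrigation costs at most $C_\BOT(r+2\rho)\nu(A)^\alpha$. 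Since $y\in\spt\nu$, Ahlfors regularity gives $\nu(B_\rho(y))\ge c_A\rho^d$, hence $\nu(A)\ge\tfrac12 c_A\rho^d$ unless $\bas(\tplan P_N,x_i)$ fills most of $B_\rho(y)$; in that generic case the comparison $\alpha\ell_0N^{-\beta/d}\nu(A)^{1-\alpha}>C_\BOT(r+2\rho)$ holds once $\kappa$ is below an explicit threshold depending only on $\alpha,d,c_A,C_A,c_1,c_2,C_\BOT$, because $\nu(A)^{1-\alpha}\gtrsim\rho^{d(1-\alpha)}=\rho^{1-\beta}$ and $\rho=\kappa N^{-1/d}$ make both sides of order $N^{-1/d}$; this contradicts the optimality of $\mu_N$ and forces $r\ge cN^{-1/d}$ with $c:=\kappa$. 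The remaining configuration — a thin intrusion of $\bas(\tplan P_N,x_j)$ into $B_\rho(y)$ with $\bas(\tplan P_N,x_i)$ occupying the rest — I would reduce to the previous one by a Vitali-type localization of $B_\rho(y)$ into subballs on which $\bas(\tplan P_N,x_j)$ has density bounded below, or handle by an auxiliary move (add a source at $y$, remove $x_j$, re-irrigate $\bas(\tplan P_N,x_j)$ from the nearby $x_i$).

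The hard part is exactly this balance in the inner ball: the first-variation gain is \emph{linear} in the re-routed mass, whereas re-irrigation is \emph{concave} in it (cost $\propto(\text{mass})^\alpha$), and the mismatch is governed precisely by $\beta=1+d\alpha-d\in(0,1]$; extracting the correct scale $\rho\sim N^{-1/d}$ from it — rather than the weaker $N^{-1/(d\beta)}$ a naive choice would give — is where the uniform $\beta$-Hölder control of the landscape from \Cref{thm:holderz}, in the guise of the branch-growth estimate behind \Cref{lem:volbas}, is genuinely needed, and the thin-intrusion case is the only other point requiring care.
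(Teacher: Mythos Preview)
Your treatment of the outer ball \labelcref{outer_ball_property} is fine and essentially identical to the paper's (indeed slightly more careful, since you invoke \labelcref{eq:delta_basin} rather than just the diameter bound).

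For the inner ball your scheme is the right one, but there is a real gap precisely in the ``thin intrusion'' case that you flag. Your landscape lower bound $z_{\tplan P_N}\ge \ell_0 N^{-\beta/d}$ is obtained basin by basin via the branch integral $\int_\gamma \Theta^{\alpha-1}$ and therefore only on $B_\rho(y)\cap\bigcup_{k\ne i}\bas(\tplan P_N,x_k)$; this is why you are forced to excise $A=B_\rho(y)\setminus\bas(\tplan P_N,x_i)$ and then need $\nu(A)\gtrsim \rho^d$. Nothing in the setup prevents $\bas(\tplan P_N,x_i)$ from occupying most of $B_\rho(y)$ while $\bas(\tplan P_N,x_j)$ touches $y$ through a set of arbitrarily small $\nu$-measure, and neither of your proposed repairs closes the gap: a Vitali refinement only produces subballs whose radii scale with that small measure, so the gain/cost comparison collapses; and ``remove $x_j$, re-irrigate $\bas(\tplan P_N,x_j)$ from $x_i$'' is not cheap, since $\bas(\tplan P_N,x_j)\subseteq B(x_j,CN^{-1/d})$ lies at distance $\ge c_1 N^{-1/d}$ from $x_i$, yielding a re-irrigation cost of the same order $N^{-(\alpha+1/d)}$ as the best available gain.

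The missing idea is to use \Cref{thm:holderz} \emph{directly}, not merely through \Cref{lem:volbas}. Your own estimate already gives $z_{\tplan P_N}(y)\ge c' N^{-\beta/d}$ at the single point $y\in\bas(\tplan P_N,x_j)$; the $\beta$-H\"older continuity of $z_{\tplan P_N}$ on $\spt\nu$ then propagates this \emph{across basin boundaries} to all of $B(y,c''N^{-1/d})\cap\spt\nu$ with $c''\coloneqq(c'/(2C_H))^{1/\beta}$, including the part lying in $\bas(\tplan P_N,x_i)$. With this in hand you can take the \emph{whole} ball $B(y,\eps N^{-1/d})$ (any $\eps\le c''$) as the removal set, re-irrigate it from $x_i$, and compare
\[
\text{gain}\ \ge\ \alpha\,\frac{c'}{2}\,c_A\,\eps^d\,N^{-(\alpha+1/d)}
\qquad\text{versus}\qquad
\text{cost}\ \le\ 2C_\BOT C_A^\alpha\,\eps^{1+d\alpha}\,N^{-(\alpha+1/d)},
\]
which is a contradiction for $\eps$ below a threshold depending only on $(\alpha,d,c_A,C_A)$ since $d<1+d\alpha$. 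This is exactly what the paper does, and it dispenses with the case split entirely; the H\"older regularity is doing more here than the ``branch-growth estimate behind \Cref{lem:volbas}'' --- it is genuinely needed to control $z_{\tplan P_N}$ on $\bas(\tplan P_N,x_i)$ near $y$, where the branch-integral bound from $x_i$ gives nothing.
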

\begin{remark}
    In particular, if $\nu = \lbm^d \mres \Omega$ for some open bounded set $\Omega$ with Lipschitz boundary, then for every source $x_i$ such that $d(x_i,\partial \Omega) > c N^{-1/d}$, \labelcref{outer_ball_property} and \labelcref{inner_ball_property} rewrite as
    \[B_{c N^{-1/d}}(x_i) \subseteq \bas(\tplan P_N,x_i) \subseteq B_{C N^{-1/d}}(x_i).\]
    Besides, this translates as uniform inner and outer ball properties of optimal partitions (solutions to \labelcref{pb:opt_partition_sets}). Indeed, by the equivalence between the optimal quantization and optimal partition problems stated in \Cref{thm:opoleq}, solutions  $(\Omega_i)_{1\leq i\leq N}$ to \labelcref{pb:opt_partition_sets} are actually $\lbm^d$-equivalent to the basins $(\bas(\tplan P_N,x_i))_{1\leq i\leq N}$ associated with some traffic plan $\tplan P_N$ with sources $\{x_i\}_{1\leq i\leq N}$.

    Finally, we remark that the number of points such that $d(x_i,\partial \Omega) > c N^{-1/d}$ is $N + O(N^{1-1/d})$ because the other points each convey a mass $\approx N^{-1}$ by \Cref{uniformity_masses_energies}, and their basins are included in $\{d(\cdot,\partial \Omega) < C' N^{-1/d}\}$ for some $C'$, a set of volume $\approx N^{-1/d}$. However, without extra assumptions on $\Omega$, some points $x_i$ may very well belong to $\R^d \setminus \bar\Omega$. This is ruled out for example when $\Omega$ is convex, but then it is not clear whether $x_i \in \Omega$ for all $x_i$'s.
\end{remark}

\begin{proof}[Proof of \Cref{prp:quasi-uniformity_basins}]
    The outer ball property \labelcref{outer_ball_property} holds if we take $C$ to be equal to $C \coloneqq c_2(C_A/(2c_\bas))^{1/d}$, by \labelcref{eq:delta_basin} in \Cref{lem:volbas} and \labelcref{all_basins_small} in the proof of \Cref{thm:delone}. For the inner ball property \labelcref{inner_ball_property}, assume that for some $i\neq j$, $d(x_i,\bas(\tplan P_N,x_j)) \leq \eps N^{-1/d}$. We shall find a lower bound on $\eps$ that depends only on the constants $(\alpha,d,c_A,C_A,\diam(\spt\nu))$. If $\eps \leq c_1/2$ where $c_1$ is the separation constant in \labelcref{separation}, then taking a point $x \in B(x_i,\eps N^{-1/d}) \cap \bas(\tplan P_N,x_j)$, we have
    \[d(x,x_j) \geq d(x_i,x_j) -d(x_i,x) \geq (c_1/2) N^{-1/d},\]
    thus taking $\gamma_j$ a $\tplan P_N$-good curve from $x_j$ to $x$, its length is greater than $(c_1/2) N^{-1/d}$, hence
    \[z_{\tplan P_N}(x) = \int_{\gamma_j} \Theta_{\tplan P_N^{x_j}}^{\alpha -1}\dd\hdm^1 \geq (c_1/2) N^{-1/d} \nu(\bas(\tplan P_N,x_j))^{\alpha -1}\geq \frac12 {c_1 (C_A c_2^d)^{\alpha -1}} N^{-\beta/d}.\]
    As a consequence, setting $c' \coloneqq \frac12 {c_1 (C_A c_2^d)^{\alpha -1}}$ and $c'' \coloneqq (c'/2C_H)^{1/\beta}$, by \Cref{thm:holderz} for every $y\in B(x, c'' N^{-1/d}) \cap \spt \nu$,
    \[z_{\tplan P_N}(y) \geq z_{\tplan P_N}(x) - C_H\abs{y-x}^\beta \geq c'/2 N^{-\beta/d}.\]
    If $\eps \leq c''$ we build a competitor $\tplan P_N'$ by removing from $\tplan P_N$ all curves going to $B(x, \eps N^{-1/d})$ and adding an optimal traffic plan $Q \in \tplanset(m_\eps\delta_{x_i},\nu \mres B(x, \eps N^{-1/d})$ where $m_\eps \coloneqq \nu(B(x, \eps N^{-1/d}))$. Using the first variation formula \Cref{properties_landscape_extended} \labelcref{first_variation_mass_landscape_extended} and the subadditivity of the $\alpha$-mass, we get by optimality of $\mu_N$
    \begin{align*}
        \mass^\alpha(\tplan P_N) \leq \mass^\alpha(\tplan P_N') &\leq \mass^\alpha(\tplan P_N) - \alpha \int_{B_{\eps N^{-1/d}}(x)} z_{\tplan P_N} \dd\nu + C_\BOT (2\eps N^{-1/d}) (C_A \eps^d N^{-1})^\alpha\\
        &\leq \mass^\alpha(\tplan P_N) - (\alpha c_A c'/2)  \eps^d N^{-\left(\alpha + \frac 1d\right)} + 2C_\BOT C_A^\alpha \eps^{1+d\alpha} N^{-\left(\alpha + \frac 1d\right)}.
    \end{align*}
    We get a contradiction when $\eps$ is smaller than some constant $c'''>0$ (depending only on $\alpha$, $d$, $c_A$, $c'$, $C_\BOT$, $C_A$) because $d < 1 + d\alpha$. During the reasoning we made, recall that we assumed $\eps \leq c_2/2$ then $\eps \leq c''$, thus we must have:
    \[\eps \geq c \coloneqq \min\{c_2/2,c'',c'''\},\]
    and \labelcref{inner_ball_property} holds with this constant $c > 0$.
\end{proof}

\subsection{Uniformity bounds for masses and energies}

\begin{proposition}\label{uniformity_masses_energies}
    Assume $\alpha \in(1-1/d,1)$. Let $\nu \in \measspace_c^+(\R^d)$ be a $d$-Ahlfors regular measure with constants $c_A,C_A >0$. Let $\mu_N = \sum_{i=1}^N m_i \delta_{x_i}$ be an $N$-point optimal quantizer and $\tplan P_N \in \tplanset(\mu_N,\nu)$ be an optimal traffic plan. There are constants $c,C >0$ depending only on $(\alpha,d,c_A,C_A,\diam(\spt \nu))$ such that for all $i \in \{1,\ldots, N\}$,
    \begin{equation*}
        c N^{-1} \leq m_i \leq C N^{-1}
    \end{equation*}
    and
    \begin{equation*}
        c N^{-(\alpha + 1/d)} \leq \mathbf d^\alpha(m_i\delta_{x_i}, \nu \mres \bas(\tplan P_N,x_i)) \leq C N^{-(\alpha + 1/d)}.
    \end{equation*}
\end{proposition}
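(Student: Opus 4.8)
\emph{Setup.} Write $\bas_i \coloneqq \bas(\tplan P_N,x_i)$ and $\nu_i \coloneqq (e_\infty)_\sharp \tplan P_N^{x_i}$. By \Cref{corr:basindisjoint} the basins partition $\nu$, so $\nu_i = \nu\mres\bas_i$ up to a $\nu$-null set and $m_i = \norm{\nu_i} = \nu(\bas_i)$; by \Cref{quantizers_disjointness} the plan $\tplan P_N^{x_i}$ is optimal between $m_i\delta_{x_i}$ and $\nu_i$, and point \labelcref{formula_optimal_cost_landscape_extended} of \Cref{properties_landscape_extended} gives
\[
    \mathbf d^\alpha(m_i\delta_{x_i},\nu\mres\bas_i) = \mass^\alpha(\tplan P_N^{x_i}) = \int_{\bas_i} z_{\tplan P_N}\dd\nu .
\]
The plan is to prove the four inequalities in turn, using freely the uniform Hölder bound of \Cref{thm:holderz}, the basin estimates \labelcref{eq:volbas}--\labelcref{eq:oscillation_landscape} of \Cref{lem:volbas}, the Delone bounds of \Cref{thm:delone} — in particular step \labelcref{all_basins_small} of its proof, i.e. $\diam(\bas_i)\le c_2 N^{-1/d}$ for every $i$ — and the inner/outer ball property of \Cref{prp:quasi-uniformity_basins}.

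\emph{The two upper bounds} are immediate. For the masses, the upper inequality of \labelcref{eq:volbas} together with $\diam(\bas_i)\le c_2 N^{-1/d}$ gives $m_i = \nu(\bas_i)\le C_A\diam(\bas_i)^d\le C_A c_2^d N^{-1}$. For the energies, combining the upper bounds in \labelcref{eq:volbas} and \labelcref{eq:oscillation_landscape} and using $d+\beta = 1+d\alpha$,
\[
    \mathbf d^\alpha(m_i\delta_{x_i},\nu\mres\bas_i) = \int_{\bas_i}z_{\tplan P_N}\dd\nu \le \nu(\bas_i)\sup_{\bas_i}z_{\tplan P_N} \le C_A C_H'\diam(\bas_i)^{1+d\alpha}\le C_A C_H' c_2^{1+d\alpha}N^{-(\alpha+1/d)} .
\]

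\emph{The mass lower bound is the main obstacle}, and I would obtain it by a competitor argument mirroring steps \labelcref{all_basins_small}--\labelcref{all_basins_far} of the proof of \Cref{thm:delone}. Suppose $m_i<\eps_0 N^{-1}$ for a small $\eps_0>0$ to be fixed; by \labelcref{eq:volbas}, $\diam(\bas_i)<(\eps_0/c_\bas)^{1/d}N^{-1/d}$, so $\bas_i$ lies in a tiny ball around any $y_i\in\bas_i$ at scale $N^{-1/d}$. Since the basins cover $\spt\nu$ and $\nu(B_{\rho N^{-1/d}}(y_i))\ge c_A\rho^d N^{-1}$ by lower Ahlfors regularity, choosing $\rho$ with $c_A\rho^d>\eps_0$ produces a point $p$ in another basin $\bas_k$ with $d(p,\bas_i)<\rho N^{-1/d}$, while step \labelcref{one_basin_large} of the proof of \Cref{thm:delone} yields an index $\ell\ne i$ (distinct from $i$ once $\eps_0$ is small) with $\diam(\bas_\ell)$ bounded below by a positive constant times $N^{-1/d}$; put $q\in\argmax_{\bas_\ell}z_{\tplan P_N}$. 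I would then modify $\tplan P_N$ into an $N$-atom competitor $\tplan P_N^\ast$ irrigating $\nu$: remove every curve ending in $B_\eta(q)$ with $\eta=\eta'N^{-1/d}$, remove the remaining curves issued from $x_i$, re-irrigate $\nu\mres(\bas_i\setminus B_\eta(q))$ by carrying its mass from $x_k$ to $p$ along a $\tplan P_N^{x_k}$-good curve followed by an optimal plan onto it, and re-irrigate $\nu\mres B_\eta(q)$ by an optimal plan from $q$. Bounding the gain through the first-variation inequality \labelcref{first_variation_mass_landscape_extended} — on $B_\eta(q)$ one has $z_{\tplan P_N}\ge\tfrac12 c_H\diam(\bas_\ell)^\beta$ by \labelcref{eq:oscillation_landscape} and \Cref{thm:holderz}, once $\eta'^\beta\le c_H/2C_H$ — and the added costs through \labelcref{upper_estimate_alpha_mass}, the bound $z_{\tplan P_N}(p)\le C_H' c_2^\beta N^{-\beta/d}$ from \labelcref{eq:oscillation_landscape} applied to $\bas_k$, and $m_i<\eps_0 N^{-1}$, one should reach
\[
    \mass^\alpha(\tplan P_N^\ast)\le\mass^\alpha(\tplan P_N)-c\,\eta'^d N^{-(\alpha+1/d)}+C\big(\eta'^{1+d\alpha}+\eps_0^{\alpha}+\eps_0\big)N^{-(\alpha+1/d)}
\]
for constants $c,C>0$ independent of $N,\eta',\eps_0$. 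As $d<1+d\alpha$, one first fixes $\eta'$ small so that $c\,\eta'^d>C\eta'^{1+d\alpha}$, then fixes $\eps_0$ small enough to absorb the remaining terms, forcing $\mass^\alpha(\tplan P_N^\ast)<\mass^\alpha(\tplan P_N)$ and contradicting optimality of $\mu_N$. Hence $m_i\ge\eps_0 N^{-1}$. (Running the same construction with $\bas_i=\emptyset$ also shows that every atom of an optimal quantizer carries positive mass, so $\#\spt\mu_N=N$.) The delicate point, and where I expect most of the work to lie, is the careful bookkeeping of the overlapping removals and the verification that all the added pieces cost $o_{\eps_0}(1)\cdot N^{-(\alpha+1/d)}$ uniformly in $N$ — exactly the kind of estimate carried out in \Cref{sec:uniform}, which is why the uniform Hölder control of $z_{\tplan P_N}$ is essential.

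\emph{The energy lower bound} then follows by a scaling argument. Since $\nu$ is $d$-Ahlfors regular one has $\nu(B_r(x))\le 2^d C_A r^d$ for all $x\in\R^d$ (reduce to a ball centered on $\spt\nu$), hence $\nu\le\Lambda\lbm^d$ for a constant $\Lambda=\Lambda(d,C_A)$. With $\rho\coloneqq\nu\mres\bas_i\le\Lambda\lbm^d$ and $\norm\rho = m_i$, rescaling space around $x_i$ by $(\Lambda/m_i)^{1/d}$ and mass by $1/m_i$ turns $\rho$ into a probability measure bounded by $\lbm^d$, so by the $1$-homogeneity in space and $\alpha$-homogeneity in mass of the $\alpha$-mass and the definition \labelcref{optimal_shape_constant} of $e_{\alpha,d}$,
\[
    \mathbf d^\alpha(m_i\delta_{x_i},\nu\mres\bas_i)\ge e_{\alpha,d}\,\Lambda^{-1/d}\,m_i^{\alpha+1/d} .
\]
Since $e_{\alpha,d}>0$ (see \labelcref{optimal_shape_constant}) and $m_i\ge\eps_0 N^{-1}$, this gives $\mathbf d^\alpha(m_i\delta_{x_i},\nu\mres\bas_i)\ge e_{\alpha,d}\Lambda^{-1/d}\eps_0^{\alpha+1/d}N^{-(\alpha+1/d)}$, concluding the proof.
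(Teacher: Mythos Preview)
Your argument is correct, but you work much harder than necessary on the mass lower bound, which you flag as ``the main obstacle''. The paper obtains it in one line from the inner-ball property \labelcref{inner_ball_property} of \Cref{prp:quasi-uniformity_basins}---a result you list in your setup but never actually invoke: since $B(x_i,cN^{-1/d})$ meets no basin $\bas_j$ with $j\ne i$ and the basins cover $\spt\nu$, one has $\nu\mres\bas_i \ge \nu\mres B(x_i,cN^{-1/d})$, hence $m_i \ge c_A c^d N^{-1}$ directly by lower Ahlfors regularity. Your competitor construction is valid, but it essentially re-derives the content of \Cref{prp:quasi-uniformity_basins} (whose proof is precisely such an argument) instead of citing it; all the ``careful bookkeeping'' you anticipate has already been done there. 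The other three bounds are handled the same way in both proofs, with only cosmetic differences: for the energy upper bound you use $\int_{\bas_i} z_{\tplan P_N}\dd\nu$ together with \labelcref{eq:oscillation_landscape}, while the paper applies the cruder estimate \labelcref{upper_estimate_alpha_mass} directly; for the energy lower bound both rescale and invoke the constant $e_{\alpha,d}$ from \labelcref{optimal_shape_constant}.
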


\begin{proof}
The upper bounds come from the fact that, by \Cref{thm:delone}, for every $i\in\{1,\ldots,N\}$, $\bas(\tplan P_N,x_i)$ has diameter less than $c_1 N^{-1/d}$, thus 
\[m_i = \nu(\bas(\tplan P_N,x_i)) \leq C_A c_1^d N^{-1},\]
and thus from the usual estimate of the branched transport cost we get \[\mathbf d^\alpha(m_i\delta_{x_i}, \nu \mres \bas(\tplan P_N,x_i)) \leq C_\BOT(c_1 N^{-1/d})(C_A c_1^d N^{-1})^\alpha.\]

The lower bounds result from \Cref{prp:quasi-uniformity_basins}, which implies that ($c,C$ being the inner and outer ball constants)
\[\nu \mres \bas(\tplan P_N, x_i) \geq \nu \mres B_{cN^{-1/d}}(x_i),\]
and thus
\[\nu(\bas(\tplan P_N, x_i)) \geq c_A c^d N^{-1}.\]
Since $\nu$ is $d$-Ahlfors regular on $\R^d$, by the Lebesgue--Besicovitch differentiation theorem \cite[Theorem~2.22]{ambrosioFunctionsBoundedVariation2000} it may be written as $\nu = f \lbm^d \mres X$ on some Borel set $X \subseteq \spt \nu$, where the Radon--Nikodým derivative $f = \dd\nu/\dd\lbm^d$ satisfies $c_A/\omega_d \leq f \leq C_A/\omega_d$. Therefore, $\omega_d C_A^{-1}\nu \mres \bas(\tplan P_N, x_i)$ is a measure which is absolutely continuous with respect to Lebesgue, with density in $[0,1]$ and total mass greater than  $m \coloneqq \omega_dc_A/C_A c^d N^{-1}$, thus
\[\mathbf d^\alpha(m_i\delta_{x_i}, \nu \mres \bas(\tplan P_N,x_i)) \geq e_{\alpha,d} m^{\alpha +  \frac 1d},\]
where $e_{\alpha,d} > 0$ is the constant from the optimal shape problem studied in \cite{pegonFractalShapeOptimization2019}, whose definition is given in \labelcref{optimal_shape_constant}.
\end{proof}

\smallskip

\noindent{\textbf{Acknowledgments.}} P.P. acknowledges the academic leave provided by Inria Paris for the year 2022-2023, and the \enquote{Young Researcher's Project} funding provided by the Scientific Council of Université Paris--Dauphine. M.P. acknowledges support from the Chilean Fondecyt Regular grant 1210426 entitled "Rigidity, stability and uniformity for large point configurations". We thank the anonymous referees for their careful reading of our manuscript, which helped us improve it.

\sloppy
\printbibliography

\end{document}